\documentclass[11pt]{amsart}
\usepackage[left=1.4in,right=1.4in]{geometry}
\usepackage{amsmath,amsthm,amssymb,mathrsfs,hyperref}
\usepackage{xcolor}
\definecolor{paperblue}{cmyk}{1,0,0,0}
\hypersetup{
  colorlinks=true,
  linkcolor=paperblue,
  citecolor=magenta!80!black,
  urlcolor=paperblue
}
\makeatletter
\newcounter{claim}
\renewcommand*\l@section{\@tocline{1}{8pt plus 2pt}{0pt}{2em}{\bfseries}}
\renewcommand*\l@subsection{\@tocline{2}{2pt}{1.5em}{3em}{}}
\renewcommand*\l@subsubsection{\@tocline{3}{1pt}{3em}{4em}{}}
\makeatother
\numberwithin{equation}{section}
\newtheorem{theorem}{Theorem}[section]
\newtheorem{lemma}[theorem]{Lemma}
\newtheorem{proposition}[theorem]{Proposition}
\newtheorem{corollary}[theorem]{Corollary}

\newtheorem{definition}[theorem]{Definition}
\newtheorem{example}[theorem]{Example}

\newcommand{\bC}{\mathbb{C}}

\newcommand{\bZ}{\mathbb{Z}}
\newcommand{\bQ}{\mathbb{Q}}
\newcommand{\bR}{\mathbb{R}}
\newcommand{\bN}{\mathbb{N}}
\newcommand{\bP}{\mathbb{P}}
\newcommand{\cA}{\mathcal{A}}
\newcommand{\Z}{\mathbb{Z}}

\newcommand{\cT}{\mathcal{T}}

\newcommand{\cO}{\mathcal{O}}

\newcommand{\bfB}{\mathbf{B}}
\newcommand{\bfT}{\mathbf{T}}
\newcommand{\bfU}{\mathbf{U}}
\newcommand{\bfG}{\mathbf{G}}

\newcommand{\mfp}{\mathfrak{p}}

\newcommand{\SL}{\operatorname{SL}}
\newcommand{\SA}{\operatorname{SA}}
\newcommand{\rk}{\operatorname{rk}}
\newcommand{\Sub}{\operatorname{Sub}}
\newcommand{\GL}{\operatorname{GL}}
\newcommand{\PGL}{\operatorname{PGL}}
\newcommand{\IRS}{\operatorname{IRS}}

\newcommand{\PSL}{\operatorname{PSL}}
\newcommand{\diag}{\operatorname{diag}}
\newcommand{\Tr}{\operatorname{Tr}}
\newcommand{\trivgp}{\langle 1 \rangle}

\newcommand{\E}{\mathbb{E}}

\newcommand{\Prob}{\operatorname{Prob}}
\newcommand{\supp}{\operatorname{supp}}

\title[Boomerang subalgebras \& IRA Rigidity]{IRA Rigidity and boomerang subalgebras of group von Neumann algebras}
\author[Amrutam]{Tattwamasi Amrutam}
\address{Institute of Mathematics of the Polish Academy of Sciences, ul. Sniadeckich 8, 00-656, Warszawa, Poland}
\email{tattwamasiamrutam@gmail.com}
\author[Glasner]{Yair Glasner}
\address{Ben Gurion University of the Negev.
	Department of Mathematics.
	Be'er Sheva, 8410501, Israel.
}
\email{yairgl@bgu.ac.il}
\author[Hartman]{Yair Hartman}
\address{Ben Gurion University of the Negev.
	Department of Mathematics.
	Be'er Sheva, 8410501, Israel.
}
\email{hartmany@bgu.ac.il}
\author[Jiang]{Yongle Jiang}
\address{School of Mathematical Sciences, Dalian University of Technology, Dalian, 116024, China}
\email{yonglejiang@dlut.edu.cn}
\date{\today}
\keywords{Invariant random algebras, invariant random subgroups,
boomerang subalgebras, group von Neumann algebras,
acylindrically hyperbolic groups, mixed identity-free groups,
arithmetic lattices}
\begin{document}
\begin{abstract}
We introduce the notion of Boomerang subalgebras of group von Neumann algebras and study their general properties. We show that every Boomerang subalgebra of $L(\Gamma)$ is the group von Neumann algebra of a Boomerang subgroup when $\Gamma$ is a countable $C^*$-simple acylindrically hyperbolic group or a finite product of such groups. Consequently, every such group is invariant random algebra (IRA) rigid: every IRA arises as the pushforward of an invariant random subgroup (IRS) under the group von Neumann algebra construction.

We also establish IRA rigidity for adjoint $K$-split higher rank $S$-arithmetic groups, as well as for all countable mixed identity-free (MIF) groups and their finite products. Consequently, IRA rigidity holds for countable groups admitting a topologically free minimal extremely proximal action on a compact Hausdorff space with more than two points. The same conclusion holds for countable groups admitting a faithful minimal action of general type on a tree whose induced boundary action is topologically free, and for finite products of groups admitting either of these actions.
\end{abstract}
\maketitle

\setcounter{tocdepth}{1}
\tableofcontents
\section{Introduction}
Let $\Gamma$ be a countable group, $L(\Gamma)$ its group von Neumann algebra with the canonical trace $\tau$, canonical unitaries $\{\lambda(g)\}_{g\in\Gamma}$, and $\|x\|_2=\tau(x^*x)^{1/2}$.
We denote by $\Sub(\Gamma)$ the space of all subgroups of $\Gamma$ endowed with the compact metrizable Chabauty topology and by $\SA(L(\Gamma))$ the space of all von Neumann subalgebras of $L(\Gamma)$ endowed with the Effros--Mar\'echal topology \cite{Effros,marechal1973topologie,haagerup1998effros,FLMP:marechal}.
$\Gamma$ acts naturally on both these spaces.
We will use the following two $\Gamma$-equivariant maps going in opposite directions, with $L$ continuous and $J$ Borel.
\[
\begin{tabular}{lllllll}
$\Sub(\Gamma)$ & $\stackrel{L}{\longrightarrow}$ & $\SA(L(\Gamma))$ & $\qquad\qquad$ & $\SA(L(\Gamma))$ & $\stackrel{J}\longrightarrow$ & $\Sub(\Gamma)$ \\
$\Delta$ & $\mapsto$ & $L(\Delta)$ & $\qquad\qquad$ & $M$ & $\mapsto$ & $\Gamma_M:=\{\gamma\mid\lambda(\gamma)\in M\}$
\end{tabular}
\]
Clearly $JL(\Delta)=\Delta$ and $LJ(M)\leq M$.
Typically, $L$ is far from being surjective, but for invariant subalgebras the situation is sometimes different.
A group $\Gamma$ has the ISR property if every \emph{invariant} $M\in\SA(L(\Gamma))$ is of the form $L(N)$ for some normal subgroup $N\in\Sub(\Gamma)^\Gamma$.
This rigidity was first established by Kalantar and Panagopoulos \cite[Theorem~1.1]{kalantar2023invariant} for irreducible lattices in higher rank semisimple Lie groups.
A series of recent works 
\cite{amrutam2023invariant}, 
\cite{CDS2023}, \cite{jiang2024example}, \cite{dudko2024character} introduces this notion and establishes the ISR property in various settings.
The study of invariant subalgebras has developed into a theory of its own; see also \cite{amrutam2025invariant,amrutam-relative-isr}.

Our goal in this note is to generalize such results to the setting of invariant random subgroups and subalgebras. Invariant random subgroups (IRSs for short) and their younger analogues invariant random algebras (IRAs) are $\Gamma$-invariant probability measures on $\Sub(\Gamma)$ and on $\SA(L(\Gamma))$ respectively. Invariant subalgebras are the simplest examples of invariant random algebras: the Dirac measure $\delta_M$ is an IRA exactly when $M$ is $\Gamma$-invariant.
Thus the study of invariant subalgebras can be viewed as the study of IRAs that are Dirac measures.
IRSs were introduced by Ab\'ert, Glasner and Vir\'ag \cite{abert2014kesten}.
The notion of IRS and the measure theoretic methods that it brought with it into group theory turned out to be very fruitful, and the hope is that IRAs might play a similar role in the von Neumann algebra setting.

One can push forward any $\nu\in\IRS(\Gamma)$ on $\Gamma$ to an IRA $L_*\nu$.
We call a countable group $\Gamma$ IRA rigid if every IRA $\mu$ of $\Gamma$ is of the form $L_*\nu$ for an IRS $\nu$ of $\Gamma$.
Our first two main theorems highlight situations for which this is the only way.
\begin{theorem}\label{thm:hyperbolic_tf}
Let $n\geq1$ and let $\Gamma_i$ be countable acylindrically hyperbolic groups with trivial amenable radical.
Set $\Gamma=\oplus_{1\leq i\leq n}\Gamma_i$.
Then every IRA $\mu$ of $\Gamma$ is of the form $\mu=L_*\nu$ for a unique IRS $\nu$ of $\Gamma$.
\end{theorem}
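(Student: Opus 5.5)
Uniqueness is immediate: since $JL=\mathrm{id}$ on $\Sub(\Gamma)$, if $\mu=L_*\nu$ then $J_*\mu=J_*L_*\nu=\nu$. So the IRS must be $\nu=J_*\mu$, and the task reduces to showing $\mu$-almost every $M$ satisfies $M=L(\Gamma_M)$.

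The plan is to introduce \emph{boomerang subalgebras} as the algebraic analogue of boomerang subgroups. A subalgebra $M\in\SA(L(\Gamma))$ is boomerang if for every $\gamma\in\Gamma$ there is a sequence $n_k\to\infty$ with $\lambda(\gamma)^{n_k}M\lambda(\gamma)^{-n_k}\to M$ in the Effros--Mar\'echal topology. By Poincar\'e recurrence, applied to the cyclic group $\langle\gamma\rangle$ acting on the compact metrizable space $\SA(L(\Gamma))$ with the invariant measure $\mu$, $\mu$-a.e.\ $M$ is recurrent for each fixed $\gamma$. Since $\Gamma$ is countable, $\mu$-a.e.\ $M$ is then boomerang. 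It therefore suffices to prove the deterministic statement: for $\Gamma$ as in the theorem, every boomerang subalgebra $M$ equals $L(\Gamma_M)$.

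For the deterministic statement, take $x\in M$ with Fourier expansion $x=\sum_g x_g\lambda(g)$, and aim to show $\lambda(g)\in M$ whenever $x_g\neq0$. Then $M$ is spanned weakly by the unitaries it contains from $\lambda(\Gamma)$, hence $M=L(\Gamma_M)$.

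The key tool is a recurrence-averaging argument modelled on the ISR proofs of Kalantar--Panagopoulos and Amrutam--Jiang.
\begin{itemize}
\item Since $\Gamma_i$ is acylindrically hyperbolic with trivial amenable radical, it is $C^*$-simple.
\item It also has many loxodromic elements $\gamma$ whose centralizer is virtually cyclic and whose powers ``move'' any $g$ outside the elementary subgroup $E(\gamma)$ into general position.
\item Fix such $\gamma$. Along the boomerang sequence $n_k$, the conjugates $\lambda(\gamma^{n_k})M\lambda(\gamma^{-n_k})$ converge to $M$.
\item Effros--Mar\'echal convergence gives elements $y_k\in M$ close in $\|\cdot\|_2$ to $\lambda(\gamma^{n_k})x\lambda(\gamma^{-n_k})$.
\item Combining with $x$ and using conditional expectations, e.g.\ $E_M$, we isolate Fourier coefficients.
\end{itemize}
Concretely, I would show $E_M(\lambda(g))$ has controlled support and then use a Powers-type averaging over elements $\gamma_j$ with independent loxodromic dynamics. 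The hyperbolic ping-pong forces the cross terms $\tau(\lambda(\gamma^{n}g\gamma^{-n}h^{-1}))$ to vanish for large $n$ unless $g\in E(\gamma)$. This yields $E_M(\lambda(g))\in\{0,\lambda(g)\}$ up to the elementary-subgroup part. Finally, varying $\gamma$ over independent loxodromics, whose elementary subgroups intersect trivially because the amenable radical is trivial, removes that ambiguity.

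For products $\Gamma=\oplus\Gamma_i$, I would run the argument one coordinate at a time. One chooses $\gamma$ loxodromic in a single factor $\Gamma_i$ and acting trivially on the other factors, and peels off coordinates using the tensor decomposition $L(\Gamma)=\bar\otimes L(\Gamma_i)$.

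The main obstacle is the deterministic step. Effros--Mar\'echal convergence only gives approximation of individual elements, not uniform control, and we have only recurrence along a subsequence rather than invariance. Converting this into the exact membership $\lambda(g)\in M$ requires a careful $\|\cdot\|_2$ estimate. I would first try to prove that $E_M$ is approximately commuting with $\mathrm{Ad}\,\lambda(\gamma^{n_k})$ in the point-$\|\cdot\|_2$ sense. This should follow because convergence of subalgebras implies strong convergence of the corresponding trace-preserving conditional expectations. After that, the argument becomes a limit version of the invariant-subalgebra proof.
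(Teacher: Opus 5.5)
Your outer reduction is the same as the paper's. Uniqueness follows from $JL=\mathrm{id}$. Poincar\'e recurrence lets you pass to boomerang subalgebras; a minor correction is that $\SA(L(\Gamma))$ is Polish but not compact for infinite $\Gamma$, though second countability is all recurrence needs. Finally, $x_g=\tau_0(x\,\E_M(\lambda(g))^*)$, so it suffices to prove $\E_M(\lambda(g))\in\{0,\lambda(g)\}$ for every $g$.

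The gap is the deterministic step. It is the entire content of the theorem, your proposal does not supply it, and the mechanism you propose cannot supply it. The only identity recurrence along $\gamma$ gives is $\lambda(\gamma^{n_k})\E_M(\lambda(s))\lambda(\gamma^{-n_k})=\E_{M_k}(\lambda(\gamma^{n_k}s\gamma^{-n_k}))$, with $M_k=\lambda(\gamma^{n_k})M\lambda(\gamma^{-n_k})\to M$. This constrains $\E_M(\lambda(s))$ only when $\gamma$ commutes with $s$, and then it only kills Fourier coefficients lying on infinite $\langle\gamma\rangle$-conjugacy orbits. That is exactly the paper's support localization $\operatorname{supp}\E_M(\lambda(s))\subseteq\bigcup_{k\geq1}C_\Gamma(s^k)\subseteq E_\Gamma(s)$ for $s\in K_{\mathrm{lox}}$ (Proposition~\ref{prop:support}).
\begin{itemize}
\item It gives no information at all for, say, a torsion element with finite centralizer.
\item For loxodromic $s$ it leaves $\E_M(\lambda(s))$ an arbitrary trace-zero element of $L(E_\Gamma(s))$.
\item Varying $\gamma$ over loxodromics independent of $s$ cannot remove this ambiguity, because such $\gamma$ do not commute with $s$ and yield no equation for $\E_M(\lambda(s))$.
\item There is no ``limit version'' of the invariant-subalgebra proofs to fall back on. Those proofs propagate $\lambda(s)\in M$, or $\E_M(\lambda(s))\neq0$, to every conjugate of $s$. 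Here $\Gamma_M$ is merely a boomerang subgroup, and all that survives is that $\E_M(\lambda(t^{-n}st^n))\neq0$ for infinitely many $n$ (Lemma~\ref{notzerounderconditionalexpec}).
\end{itemize}

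The paper closes this with three ingredients your plan lacks.

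(i) A seed: if $M\neq\bC1$, some $s\in K_{\mathrm{lox}}$ has $\E_M(\lambda(s))\neq0$. This uses the boomerang positive-definite function $\phi_M(g)=\|\E_M(\lambda(g))\|_2^2$. Given $g\neq e$, pick a loxodromic $h$ with $g\notin E_\Gamma(h)$, which is possible since the amenable radical is trivial. Then $h^{-k}g^{-1}h^kg\in K_{\mathrm{lox}}$ for all $k\geq N$. Taking recurrence times of $h$ with gaps at least $N$ shows $K_{\mathrm{lox}}$ is $\phi_M$-thick. The GNS/Bessel argument of Lemma~\ref{lem:thickness-vanishing} then gives $\phi_M=\delta_e$ if $\E_M$ vanishes on $K_{\mathrm{lox}}$.

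(ii) The dichotomy $\E_M(\lambda(s))\in\{0,\lambda(s)\}$ for $s\in K_{\mathrm{lox}}$. Take $A=E_\Gamma(s)$ and $t$ with $\langle A,t\rangle\cong A*\langle t\rangle$. Recurrence yields $b=t^{-n}st^n$ with $\E_M(\lambda(b))\neq0$, and $\langle A,B\rangle\cong A*B$ for $B=t^{-n}At^n$. Now compare coefficients at $xy$, with $x\in A\setminus\{e,s\}$ and $y\in B\setminus\{e\}$, on both sides of $\E_M(\lambda(s))\E_M(\lambda(b))=\sum_{y'}d_{y'}\E_M(\lambda(sy'))$. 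Support localization for $sy'$ and reduced forms in $A*B$ force $\E_M(\lambda(s))=c_s\lambda(s)$, and then $c_s\in\{0,1\}$.

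(iii) Passage to arbitrary $g$. Now $\lambda(s),\lambda(b)\in M$ with $E_\Gamma(s)\cap E_\Gamma(b)=\{e\}$. Choose $r\in\{s,b\}$ with $g\notin E_\Gamma(r)$ and $k$ with $gr^k\in K_{\mathrm{lox}}$. Bimodularity gives $\E_M(\lambda(g))\lambda(r^k)=\E_M(\lambda(gr^k))\in\{0,\lambda(gr^k)\}$.

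Finally, for $n\geq2$ you cannot peel off coordinates through $L(\Gamma)=\bar\otimes_iL(\Gamma_i)$, because boomerang (even invariant) subalgebras need not split. Take the normal subgroup $N=\{(g,h)\in\mathbb F_2\times\mathbb F_2: g\text{ and }h\text{ have the same image in }\mathbb Z^2\}$. Then $L(N)$ is not of the form $M_1\mathbin{\bar\otimes}M_2$. Indeed, $L(N)\cap(L(\mathbb F_2)\otimes1)=L([\mathbb F_2,\mathbb F_2])\otimes1$ and symmetrically in the second coordinate, which would force $N\subseteq[\mathbb F_2,\mathbb F_2]^2$. The paper instead works with elements loxodromic in several coordinates at once ($K^I_{\mathrm{lox}}$). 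It proves thickness of their union and the dichotomy for them, and uses a maximal support set $I$ to show $M\subseteq L(\oplus_{i\in I}\Gamma_i)$ before running step (iii) coordinatewise.
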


The first theorem concerns products of groups with rank one behaviour.
We now turn to higher rank lattices.

\begin{theorem}\label{thm:ksplit}
Let $K$ be a number field, let $S$ be a finite set of places containing all Archimedean places, and let $\bfG$ be a connected, absolutely simple, adjoint, $K$-split algebraic group with $\sum_{v\in S}\operatorname{rank}_{K_v}(\bfG)\geq2$.
Let $\Gamma<\bfG(K)$ be any subgroup commensurable with $\bfG(\mathcal O_S)$.
Then every ergodic IRA $\mu$ of $\Gamma$ is of the form $\mu=L_*\nu$ for an ergodic IRS $\nu$ of $\Gamma$.
\end{theorem}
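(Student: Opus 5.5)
Since $L$ is Borel and $J\circ L=\mathrm{id}$, it suffices to show that $\mu$-almost every $M$ satisfies $M=L(\Gamma_M)$. Then $\nu:=J_*\mu$ is an IRS with $L_*\nu=\mu$. Moreover, $\nu$ is ergodic: any $\Gamma$-invariant Borel set $A\subseteq\Sub(\Gamma)$ has $\nu(A)=\mu(J^{-1}A)\in\{0,1\}$.

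To get the almost sure identity, I would use the \emph{boomerang} framework from the abstract. By a Poincar\'e recurrence argument, $\mu$-a.e.\ $M$ is a boomerang subalgebra. That is, for every $\gamma\in\Gamma$ there are $n_k\to\infty$ with $\gamma^{n_k}M\gamma^{-n_k}\to M$ in the Effros--Mar\'echal topology. Indeed, for each $\gamma$ the measure $\mu$ is $\langle\gamma\rangle$-invariant on the compact metrizable space $\SA(L(\Gamma))$, so a.e.\ point is recurrent. Since $\Gamma$ is countable, we may intersect over all $\gamma$.

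The task then reduces to showing that every boomerang subalgebra $M$ of $L(\Gamma)$ equals $L(\Gamma_M)$. Fix $x\in M$ with Fourier expansion $x=\sum_g x_g\lambda(g)$. We must show $\lambda(g)\in M$ whenever $x_g\neq0$. Using the boomerang property, I would "purify" the support of $x$ under conjugation.

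Take a $\Gamma$-split semisimple element $\gamma$ in a Cartan subgroup, or more generally one acting by a regular element in higher rank. Then compare $E_M$ with limits of $E_{\gamma^{n_k}M\gamma^{-n_k}}$. Taking limits, elements of $M$ are approximately invariant under the conjugations $\mathrm{Ad}\,\lambda(\gamma^{n_k})$ in a weak sense. In particular, for $x\in M$, $\lambda(\gamma^{n_k})x\lambda(\gamma^{-n_k})$ becomes close to $M$. Averaging these, in the style of the ISR proof of Kalantar--Panagopoulos, shows $E_M$ commutes with the projection onto the part of $\ell^2$ supported on the centralizer $C_\Gamma(\gamma)$. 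The reason is that coefficients $x_g$ with $g\notin C(\gamma)$ get pushed off to infinity, since conjugacy classes under $\gamma^n$ escape.

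Concretely, write $P_\gamma$ for the projection onto $\overline{\mathrm{span}}\{\delta_g: g\in C_\Gamma(\gamma)\}$. I want $P_\gamma(M)\subseteq M$, that is, $E_{L(C(\gamma))}(x)\in M$. Iterating over many $\gamma$'s, and using higher rank structure (root subgroups, unipotents, and the fact that $\bfG$ is $K$-split adjoint), I would then separate individual group elements. This mirrors Kalantar--Panagopoulos: combine projections for tori in different positions, together with the charmenability/character rigidity of such lattices (Bekka, Peterson, Boutonnet--Houdayer), to conclude each $\lambda(g)$ with $x_g\ne0$ lies in $M$.

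An alternative route is via characters. The IRA $\mu$ gives a function $\varphi(g)=\int \|E_M(\lambda(g))\|_2^2\,d\mu(M)$, which is positive definite and conjugation invariant. Likewise $\int\tau(E_M(\cdot))$ combined with boomerang data produces a $\Gamma$-invariant state on a suitable $C^*$-algebra. Character rigidity for higher rank $S$-arithmetic lattices (Peterson / Bekka / Boutonnet--Houdayer), valid in the $K$-split adjoint case, forces these characters to be $\delta_e$ or trivial on finite-index pieces. This yields that $E_M(\lambda(g))\in\{0,\lambda(g)\}$ almost surely, i.e.\ $M$ is spanned by group unitaries.

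The main obstacle is passing from "$E_M(\lambda(g))$ is $0$ or $\lambda(g)$" to $M=L(\Gamma_M)$ for non-Dirac $\mu$. Invariance only holds on average, so the Kalantar--Panagopoulos trick must be run with the boomerang recurrence instead. I would handle this by proving $E_M$ commutes with every $E_{L(\Gamma_M)}$-type Fourier projection, using ergodicity to get a single type of character almost surely.
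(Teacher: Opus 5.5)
There is a genuine gap. Your argument breaks at the reduction to ``every boomerang subalgebra $M\leq L(\Gamma)$ equals $L(\Gamma_M)$''. That deterministic statement is strictly stronger than the theorem. The paper itself remarks that its argument falls short of such a boomerang version in higher rank, and nothing in your sketch proves it.

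Your escape-to-infinity idea is sound as far as it goes. Weak limits of $\lambda(\gamma^{n_k})x\lambda(\gamma^{-n_k})$ with $x\in M$ lie in $M$, so $\E_{L(C_\Gamma(\gamma))}(M)\subseteq M$ whenever $C_\Gamma(\gamma^m)=C_\Gamma(\gamma)$ for all $m\geq1$. But composing such projections only cuts elements of $M$ down to $L(A)$, where $A$ is an intersection of centralizers, typically an abelian group such as $\Upsilon_\theta$. A priori $M\cap L(A)$ can be a non-group algebra, for instance the algebra generated by $\lambda(u)+\lambda(u)^*$ in $L(\langle u\rangle)$, which contains no nontrivial group unitary. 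So no iteration of these projections ``separates individual group elements''. Nor can character rigidity enter at this stage: a single boomerang $M$ only gives the boomerang function $\phi_M$, which need not be conjugation invariant. Invariance is available only after integrating against $\mu$, and your reduction discards $\mu$.

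Your character route has the same defect. Knowing that $\varphi=\int\|\E_M(\lambda(\cdot))\|_2^2\,d\mu(M)$ is a convex combination of $\delta_e$ and finite-dimensional characters says nothing by itself about $\E_M(\lambda(g))\in\{0,\lambda(g)\}$. Ergodicity of $\mu$ does not make $\varphi$ extremal, and $\int\tau\circ\E_M\,d\mu=\tau$ carries no information. Conversely, the step you single out as the main obstacle is trivial: if $\E_M(\lambda(g))\in\{0,\lambda(g)\}$ for all $g$, the Fourier coefficients of every $x\in M$ vanish off $\Gamma_M$.

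The missing ingredients are as follows. First, push forward to the IRS $J_*\mu$ and use the Nevo--Stuck--Zimmer classification.
\begin{itemize}
\item If $\Gamma_M$ is a.s.\ a conjugate of a finite-index $H$, then $M\supseteq L(H_0)$ for the normal core $H_0$. Since $H_0$ is relatively icc, every intermediate algebra is some $L(\Delta)$.
\item If $\Gamma_M$ is a.s.\ trivial, one shows $\mu=\delta_{\bC}$. The support lemma places $\E_M(\lambda(x_\theta(t)))$ in $L(\Upsilon_\theta)$. An explicit $\SL_2$ computation shows that for such $M$ at most one of the $\E_M(\lambda(s_i^k))$, with $s_i=\gamma_is\gamma_i^{-1}$, is nonzero. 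Averaging over $i$ using the invariance of $\mu$ then gives $\int\|\E_M(\lambda(s^k))\|_2^2\,d\mu=0$ for all $k\neq0$.
\end{itemize}
Only this vanishing lets character rigidity bite. The Herglotz measure of $\varphi|_{\langle s\rangle}$ is then Haar, whereas finite-dimensional characters contribute atomic measures, so $\varphi=\delta_e$.

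A minor point: $\SA(L(\Gamma))$ is Polish but not compact for infinite $\Gamma$. Poincar\'e recurrence still holds there, so that step survives.
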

By ergodic decomposition, and since $L(\text{Sub}(\Gamma))$ is closed, Theorem~\ref{thm:ksplit} implies that $\Gamma$ is IRA rigid. 

While the two theorems look similar, their proofs are different in nature.
For products of acylindrically hyperbolic groups, both IRSs and IRAs are abundant and the proof relies only on general, geometric properties of the group and recurrence.
For higher rank lattices, on the other hand, the proof relies on deep rigidity results that amount to a full classification of all ergodic IRSs given by the Nevo--Stuck--Zimmer theorem \cite{stuck1994stabilizers}, \cite{NSZ:intermediate_factor}, \cite[Corollary~1.4]{DGGHL:char_rigid} and then of all extremal characters \cite{Bekka}, \cite{PT:char_rigidity}, \cite{BBHP}, \cite{DGGHL:char_rigid} of these groups.
In turn, combining Theorem~\ref{thm:ksplit} with these classification results yields a complete classification of all IRAs for such $K$-split higher rank lattices.

Let us now discuss the proof strategy of the above theorems.
Our result for products of acylindrically hyperbolic groups follows from a more general classification of boomerang subalgebras of their group von Neumann algebras. Our motivation comes from boomerang subgroups.

Boomerang subgroups were introduced in \cite{glasner2024boomerang} and \cite{G:lin_IRS}.
Here we define, similarly, boomerang subalgebras.
These are subalgebras $M\in\SA(L(\Gamma))$ that are recurrent points in the corresponding topological dynamical system $(\SA(L(\Gamma)),\langle\gamma\rangle)$ with respect to every element $\gamma\in\Gamma$ separately.
More explicitly, for every $e\neq\gamma\in\Gamma$ there is an increasing sequence $n_i\to+\infty$ such that $\lambda(\gamma)^{n_i}M\lambda(\gamma)^{-n_i}\to M$ in the Effros--Mar\'echal topology.
Boomerang subgroups are defined analogously.

It is a direct consequence of Poincar\'e recurrence (see \cite[Example~2.9(5)]{glasner2024boomerang}) that every IRS gives full measure to the set of boomerang subgroups.
The same application of Poincar\'e recurrence shows that every IRA gives full measure to the boomerang subalgebras (since $\SA(L(\Gamma))$ is a Polish space~\cite{marechal1973topologie}).
Thus boomerang subgroups generalize IRSs and recent works \cite{Lederle2026}, \cite{GHL:ec} have shown that in free groups this is indeed a generalization as there are many boomerangs that do not come from IRSs, which makes the following theorem all the more interesting:
\begin{theorem}\label{thm:hyperbolic_tf_boom}
Let $n\geq1$ and let $\Gamma_i$ be countable acylindrically hyperbolic groups with trivial amenable radical.
Set $\Gamma=\oplus_{1\leq i\leq n}\Gamma_i$. Then, every boomerang subalgebra $M\in\SA(L(\Gamma))$ is of the form $L(\Delta)$ for a unique boomerang subgroup $\Delta\in\Sub(\Gamma)$.
\end{theorem}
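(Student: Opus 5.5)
The plan is to show that $M=L(\Delta)$ with $\Delta:=\Gamma_M=J(M)$. Uniqueness is then immediate from $JL=\mathrm{id}$, and $\Delta$ is boomerang because $J$ is equivariant and, restricted to algebras of the form $L(\cdot)$, essentially inverts the continuous map $L$. To make this last point precise, I would note that if $\lambda(\gamma)^{n_i}M\lambda(\gamma)^{-n_i}\to M$ with $M=L(\Delta)$, then $L(\gamma^{n_i}\Delta\gamma^{-n_i})\to L(\Delta)$. Passing to a Chabauty-convergent subsequence $\gamma^{n_i}\Delta\gamma^{-n_i}\to\Delta'$, continuity of $L$ gives $L(\Delta')=L(\Delta)$, hence $\Delta'=\Delta$.

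The core step is to prove that $M$ is spanned by the unitaries $\lambda(g)$, $g\in\Gamma_M$. Since $L(\Gamma_M)\subseteq M$, it suffices to show that for every $x\in M$ with Fourier expansion $x=\sum_g x_g\lambda(g)$, each $g$ with $x_g\neq0$ satisfies $\lambda(g)\in M$. Then $x\in L(\Gamma_M)$, because $E_{L(\Gamma_M)}$ picks out the Fourier coefficients along $\Gamma_M$.

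To see that $\lambda(g)\in M$, I would use recurrence to "average out" every Fourier coefficient except those lying in a single conjugation-invariant direction. Take a loxodromic (WPD) element $\gamma$ in the relevant factor. In the acylindrically hyperbolic setting with trivial amenable radical, the elementary closure $E(\gamma)$ is virtually cyclic, and its intersection with the other conjugates is small. For $n_i$ with $\mathrm{Ad}\lambda(\gamma)^{n_i}(M)\to M$, the elements $\lambda(\gamma)^{n_i}x\lambda(\gamma)^{-n_i}\in\mathrm{Ad}\lambda(\gamma^{n_i})M$ can be approximated in $\|\cdot\|_2$ by elements of $M$; this is the Effros--Mar\'echal characterization of convergence, via the strong convergence of the conditional expectations $E_{M_i}\to E_M$.

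Concretely, I would form the products
\[
x\,\bigl(\lambda(\gamma)^{n_i}x^*\lambda(\gamma)^{-n_i}\bigr)
\]
and similar words, and take weak limits. Because conjugates $\gamma^{n}h\gamma^{-n}$ with $h\notin E(\gamma)$ escape to infinity and their cross terms become asymptotically orthogonal, the weak limits of $E_M$ applied to these words land in $M$. They compute $\sum_{g\in C} x_g\lambda(g)$ for the subset $C=\{g: g\in E(\gamma)\}$, or some twisted version of it. Iterating with several independent loxodromics, whose elementary subgroups intersect trivially, I would isolate single Fourier coefficients, possibly after multiplying $x$ by $\lambda(h)$ for elements $h$ already known to lie in $\Gamma_M$. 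For products $\Gamma=\oplus\Gamma_i$, I would run this coordinatewise, using loxodromics in each factor $\Gamma_i$ separately, since recurrence holds for every element of $\Gamma$ individually.

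I expect the main obstacle to be extracting an honest element of $M$ from the recurrence. Effros--Mar\'echal convergence only gives approximation of the conjugated algebra by $M$; it does not give $\mathrm{Ad}\lambda(\gamma^{n_i})(x)\in M$. So I need a mechanism, such as the weak-limit-of-conditional-expectations trick or a Dixmier-type averaging argument in the spirit of the ISR proofs of \cite{amrutam2023invariant}, that converts approximate membership into exact membership of a localized Fourier piece. I would first try to prove a lemma of the following form: if $y_i\in\mathrm{Ad}\lambda(\gamma^{n_i})M$ converge weakly to $y$ and $\mathrm{Ad}\lambda(\gamma^{n_i})M\to M$, then $y\in M$. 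That would follow from the lower semicontinuity part of the Mar\'echal topology, namely that the $\liminf$ equals the $\limsup$ for the convergent sequence.
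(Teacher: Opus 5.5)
Your reductions are fine: uniqueness follows from $JL=\mathrm{id}$, and the boomerang property of $\Delta$ follows from the embedding $L$. Your proposed lemma is also true, and easy in the tracial setting. If $y_i\in M_i$ are uniformly bounded, $y_i\to y$ weakly and $M_i\to M$, then $\tau(yw)=\lim_i\tau(y_i\E_{M_i}(w))=\tau(y\E_M(w))$ for every $w$, so $y=\E_M(y)\in M$. By contrast, your claim that $\lambda(\gamma^{n_i})x\lambda(\gamma^{-n_i})$ is $\|\cdot\|_2$-close to $M$ does not follow from Effros--Mar\'echal convergence. That convergence gives $\E_{M_i}\to\E_M$ only pointwise, not uniformly along a moving sequence.

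Apply your lemma to $y_i=\lambda(\gamma^{n_i})x\lambda(\gamma^{-n_i})$. It shows that $M$ is closed under the ``truncation'' that keeps only the Fourier coefficients of $x$ on $\bigcup_kC_\Gamma(\gamma^k)\subseteq E_\Gamma(\gamma)$; for torsion-free $\Gamma$ this is exactly $\E_{L(E_\Gamma(\gamma))}(x)\in M$. Products inside a single $M_i$ only give truncations of products in $M$. Mixed words such as $x\cdot\operatorname{Ad}\lambda(\gamma^{n_i})(x^*)$ have weak limits already in $M$. So, as described, your mechanism yields only this closure property, which is essentially the paper's support Proposition~\ref{prop:support} in dual form.

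The genuine gap is that this closure property cannot force $M=L(\Gamma_M)$. Iterating truncations along loxodromics with $E_\Gamma(\gamma_1)\cap E_\Gamma(\gamma_2)=\{e\}$ just returns $\tau(x)1$. Multiplying by $\lambda(h)$ with $h\in\Gamma_M$ presupposes $\Gamma_M\neq\{e\}$, which is the heart of the matter. Concretely, take $N=\{\lambda(a)+\lambda(a)^*\}''\subseteq L(\mathbb F_2)$. It is closed under every such truncation, for every $\gamma$ and every sequence $n_i$: distinct conjugates $\gamma^n\langle a\rangle\gamma^{-n}$ meet trivially unless $\gamma\in\langle a\rangle$, so weak limits are scalars. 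Yet $\Gamma_N=\{e\}$ and $N\neq\bC1$. Your argument only uses the half of Effros--Mar\'echal convergence saying that weak limits from $M_i$ lie in $M$. The algebra $N$ fails the other half, $\E_{M_i}(z)\to\E_M(z)$, and the paper uses exactly that half.

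The missing ingredients are these:
\begin{itemize}
\item[(i)] A generalized loxodromic $s$ with $\E_M(\lambda(s))\neq0$ whenever $M\neq\bC1$. This comes from thickness of $K_{\mathrm{lox}}$ for the boomerang character $\phi_M$, together with the Bessel vanishing Lemma~\ref{lem:thickness-vanishing}.
\item[(ii)] Lemma~\ref{notzerounderconditionalexpec}, which propagates $\E_M(\lambda(s))\neq0$ to $\E_M(\lambda(b))\neq0$ for $b=t^{-n}st^n$ with $\langle E_\Gamma(s),E_\Gamma(b)\rangle\cong E_\Gamma(s)*E_\Gamma(b)$. One then compares Fourier coefficients in $\E_M(\lambda(s))\E_M(\lambda(b))=\sum_{y'}d_{y'}\E_M(\lambda(sy'))$ using reduced normal forms, which yields $\E_M(\lambda(s))\in\{0,\lambda(s)\}$. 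It is this multiplicative use of two free elements, not averaging, that isolates a single coefficient.
\item[(iii)] Passage to arbitrary $g$: choose $r\in\{s,b\}$ and $k$ with $gr^k\in K_{\mathrm{lox}}$, and use $\E_M(\lambda(g))\lambda(r^k)=\E_M(\lambda(gr^k))$.
\end{itemize}
For $n\geq2$, ``run it coordinatewise'' also hides real work, since $\bigcup_kC_\Gamma(\gamma^k)$ then contains all the other factors. The paper needs the invariance of coordinate subalgebras under $\E_M$ and the maximal index set argument of Theorem~\ref{thm:boomerang-products-ah}.
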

While this directly implies Theorem~\ref{thm:hyperbolic_tf}, it is important to note that the two theorems are of different nature.
Boomerangs are deterministic objects, not measures.
Consequently, Theorem~\ref{thm:hyperbolic_tf_boom} is not an almost everywhere statement, but rather a statement about every boomerang subalgebra.
In the higher rank setting, boomerangs still play a crucial role, but unfortunately our proof falls short of establishing a similar generalization of Theorem~\ref{thm:ksplit}.

We do not know whether the ISR property, or the property that every boomerang subalgebra is a group von Neumann algebra, is preserved under finite direct products.
Thus the product statement in Theorem~\ref{thm:hyperbolic_tf_boom} requires a separate argument.

The groups in Theorems~\ref{thm:hyperbolic_tf} and~\ref{thm:ksplit} belong to the larger class of $C^*$-simple groups; see \cite[Theorem~8.14]{DGO}, \cite[Theorem~1]{BCD} and \cite[Proposition~19(i)]{delaharpe2007simplicity}.
It is natural to ask whether there is a dynamical characterization of IRA rigid groups.
We take a first step in this direction.
For every nontrivial countable group $\Gamma$, the free product $\Gamma*\langle t\rangle$, where $\langle t\rangle\cong\mathbb Z$, is acylindrically hyperbolic (see, for example, \cite[Corollary~2.2 and the paragraph following it]{minasyan-osin2015trees}).
Its amenable radical is trivial, so it is IRA rigid by Theorem~\ref{thm:hyperbolic_tf}.
To obtain IRA rigidity for $\Gamma$ from this larger group, we use mixed identity-freeness.

Recall that a nontrivial group $\Gamma$ is mixed identity-free if every nonidentity element of $\Gamma*\langle t\rangle$ survives under some homomorphism $\Gamma*\langle t\rangle\to\Gamma$ fixing $\Gamma$ pointwise, where $\langle t\rangle\cong\mathbb Z$~\cite{hull-osin2016}.
\begin{theorem}\label{thm:ira-mif-intro}
Every finite product of countable nontrivial mixed identity-free groups is IRA rigid.
\end{theorem}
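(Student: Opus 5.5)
The plan is to reduce IRA rigidity of a finite product $\Gamma=\Gamma_1\times\dots\times\Gamma_n$ of MIF groups to Theorem~\ref{thm:hyperbolic_tf} applied to $\tilde\Gamma=\oplus_i(\Gamma_i*\langle t_i\rangle)$. Each $\Gamma_i*\langle t_i\rangle$ is acylindrically hyperbolic with trivial amenable radical, so $\tilde\Gamma$ is IRA rigid. In fact, I expect to use the boomerang version, Theorem~\ref{thm:hyperbolic_tf_boom}, together with MIF to transport boomerang properties from $L(\Gamma)$ to $L(\tilde\Gamma)$. Let $\mu$ be an IRA of $\Gamma$. By Poincaré recurrence, $\mu$-a.e.\ $M$ is a boomerang subalgebra of $L(\Gamma)$. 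It then suffices to prove the deterministic statement that every boomerang $M\in\SA(L(\Gamma))$ satisfies $M=L(\Gamma_M)$. If this holds, then $\mu=L_*J_*\mu$, and $\nu:=J_*\mu$ is an IRS because $J$ is $\Gamma$-equivariant and Borel.

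For the deterministic statement, the standard route (as in the ISR literature) is to show that $\mathbb E_M(\lambda(g))\in\bC\lambda(g)$ for each $g\in\Gamma$. The condition $\mathbb E_M(\lambda(g))\ne0$ then forces $\lambda(g)\in M$, which gives $M=L(\Gamma_M)$, since $M$ is $\|\cdot\|_2$-spanned by $\mathbb E_M$ of the $\lambda(g)$.

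So fix $g$ and write $x=\mathbb E_M(\lambda(g))=\sum_h c_h\lambda(h)$. Suppose $c_h\ne0$ for some $h\ne g$. Inside the ambient boomerang argument for acylindrically hyperbolic groups, one finds elements $\gamma$ whose large powers conjugate $\lambda(g)$ and $\lambda(h)$ apart. The boomerang property then makes $\tau(x^*\gamma^{n_i}x\gamma^{-n_i})$-type quantities both converge to $\|x\|_2^2$ and become small. Such elements need not exist in $\Gamma$ itself, but they exist as words $w(t)$ in $\Gamma_i*\langle t_i\rangle$. The MIF property supplies a homomorphism $\Gamma_i*\langle t_i\rangle\to\Gamma_i$ fixing $\Gamma_i$ that keeps any prescribed finite set of nontrivial mixed words (such as $[w^{k} g w^{-k}, h]$-type words or $g^{-1}w^k h w^{-k}$) nontrivial. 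Acting coordinatewise, this gives such a homomorphism for the product.

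I would try first to run the argument in the following order. Extract from the proof of Theorem~\ref{thm:hyperbolic_tf_boom} the exact finitary condition used to show $c_h=0$ for $h\ne g$. This should be a statement that for finitely many elements there is $\gamma$ and powers along which certain conjugates avoid certain finite sets, in the manner of a uniform ping-pong. Then realize this condition by mixed identities in $\Gamma*\langle t\rangle$ and specialize $t$ to some element of $\Gamma$ using MIF.

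The main obstacle is that the boomerang property only gives recurrence along some unknown subsequence $n_i$ for each $\gamma$. Meanwhile, MIF only controls finitely many words, hence finitely many powers at once. One fix is to use instead a recurrence-free consequence. For instance, the fact that $\mu$ is invariant lets us average over $\gamma\in\Gamma$, so that $\int\|\mathbb E_M(\lambda(g))-c_g\lambda(g)\|_2^2\,d\mu$ is controlled by finitely many conjugations. Alternatively, one can embed $\mu$ into an IRA of $\tilde\Gamma$ by inducing via the retraction $\tilde\Gamma\to\Gamma$ given by MIF, and then apply Theorem~\ref{thm:hyperbolic_tf} there. A further alternative is to use that MIF groups are $C^*$-simple with no nontrivial IRS-type obstructions, and to apply a single Powers-type averaging with finitely many elements produced by MIF. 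I would pursue the averaging version, since it needs only finitely many group elements per approximation and so fits exactly the finitary nature of MIF.
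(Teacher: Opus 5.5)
Your reduction in the first paragraph cannot work: the deterministic statement that every boomerang $M\leq L(\Gamma)$ equals $L(\Gamma_M)$ is false for MIF groups. There are locally finite MIF $3$-groups. In such a group conjugation by each $\gamma$ has finite order, so every subalgebra is boomerang. Yet for $g$ of order $3$ and $p=(1+\lambda(g)+\lambda(g)^2)/3$, the algebra $\bC p\oplus\bC(1-p)$ has $\Gamma_M=\{e\}$ but is not $\bC1$; this is Example~\ref{ex:mif-boomerang}.

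For the same reason, your plan of extracting a finitary condition from the proof of Theorem~\ref{thm:hyperbolic_tf_boom} and realizing it by MIF specializations of $t$ breaks down. That proof needs recurrence along infinitely many powers of infinite-order elements (Proposition~\ref{prop:support}, thickness of $K_{\mathrm{lox}}$) and a genuine free product $E_\Gamma(s)*\langle t\rangle$. A single specialization preserves only finitely many words, and a torsion group has no infinite-order elements at all. So invariance of $\mu$ must be used in an essential way, not merely almost-everywhere recurrence in $\Gamma$. Your fallbacks do not supply this. The ``averaging version'' is not specified, and MIF groups need not be $C^*$-simple: the amenable MIF groups above are not.

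Your auxiliary group $\Lambda=\oplus_i(\Gamma_i*\langle t_i\rangle)$ and the coordinatewise retractions are the right ingredients. The missing idea is how to turn $\mu$ into an IRA of $\Lambda$. A single retraction $\rho$ does not do it. The pulled-back function $\psi_{\rho,M}(g,h)=\tau_0(\E_M(\lambda(\rho g))\lambda(\rho h)^*)$ equals $1$ at $(g,e)$ for $g\in\ker\rho$, so it is not the coefficient function of any trace-preserving map on $L(\Lambda)$.

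The paper instead proceeds as follows.
\begin{enumerate}
\item It takes asymptotically injective retractions $\rho_j$ (MIF plus Hull--Osin) and forms the $\Lambda$-invariant measures $(\mathfrak{pd}_j)_*\mu$ on $\operatorname{PD}_1(\Lambda\times\Lambda)$.
\item It passes to a weak$^*$ limit $\eta$, which is supported on functions with $\psi(g,e)=\psi(e,g)=\delta_{g,e}$.
\item By Proposition~\ref{prop:ira-ucp-correspondence}, these $\psi$ correspond to trace-preserving UCP maps $T_\psi$ on $L(\Lambda)$.
\item The fixed-point algebras $\operatorname{Fix}(T_\psi)$ form an IRA of $\Lambda$, to which the acylindrically hyperbolic classification (Theorem~\ref{thm:hyperbolic_tf}) applies, giving $\operatorname{Fix}(T_\psi)=L(H)$.
\item Using $(\operatorname{Res}_\Gamma)_*\eta=\mathfrak{pd}_*\mu$ and injectivity of $M\mapsto\psi_M$, it recovers $M=\E_{L(\Gamma)}(\operatorname{Fix}(T_\psi))=L(H\cap\Gamma)$ almost surely.
\end{enumerate}
None of these steps is in your proposal. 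Your remark about ``inducing via the retraction'' would need exactly this limiting construction to make sense.
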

As a consequence, we obtain IRA rigidity for finite products of groups admitting the following actions.
\begin{theorem}\label{thm:ira-boundaries-trees-intro}
Let $n\geq1$ and let $\Gamma_i$ be countable groups for $1\leq i\leq n$.
Suppose that each $\Gamma_i$ satisfies one of the following conditions:
\begin{enumerate}
\item $\Gamma_i$ admits a topologically free minimal extremely proximal action on a compact Hausdorff space with more than two points.
\item $\Gamma_i$ admits a faithful minimal action of general type on a tree, and the induced action on the boundary of the tree is topologically free.
\end{enumerate}
Then $\Gamma=\oplus_{1\leq i\leq n}\Gamma_i$ is IRA rigid.
\end{theorem}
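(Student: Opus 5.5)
The plan is to reduce Theorem~\ref{thm:ira-boundaries-trees-intro} to Theorem~\ref{thm:ira-mif-intro}. It suffices to show that each $\Gamma_i$ is a countable nontrivial mixed identity-free group. Then $\Gamma=\oplus_i\Gamma_i$ is a finite product of such groups, and Theorem~\ref{thm:ira-mif-intro} applies directly. So the whole argument reduces to a statement about a single group $\Gamma_i$ satisfying (1) or (2).

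For case (1), I would invoke the known criterion of Hull--Osin \cite{hull-osin2016}, as refined in later work on MIF groups and boundary actions. It says that a group admitting a topologically free, minimal, extremely proximal action on a compact Hausdorff space with more than two points is MIF. If a self-contained argument is needed, the route is a ping-pong argument. Take a nontrivial mixed word $w(t)=g_0t^{\epsilon_1}g_1\cdots t^{\epsilon_k}g_k$. Extreme proximality together with minimality supplies many elements with attracting and repelling dynamics: any closed proper subset can be contracted into an arbitrarily small open set. Topological freeness lets one choose points not fixed by the finitely many nontrivial coefficients $g_j$. One then picks $t\mapsto h\in\Gamma_i$ with strong north--south-type dynamics relative to a small open set $U$, where $U$ is chosen so that the $g_j$ move $U$ off itself. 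Tracking a point through the word then shows $w(h)$ moves it, so $w(h)\neq e$. Nontriviality of $\Gamma_i$ is automatic, since the space has more than two points and the action is minimal and faithful.

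For case (2), a faithful minimal general type action on a tree $T$ induces an action on $\partial T$. Minimality of the action on $T$ gives minimality of the action on the limit set, which for a minimal tree is essentially $\partial T$ or a closed invariant subset. Since the action is of general type, it contains hyperbolic elements with disjoint fixed-point pairs, and this yields extreme proximality on $\partial T$: compact proper subsets can be pushed into small neighbourhoods of attracting points via hyperbolic elements. Combined with the assumed topological freeness, this reduces case (2) to case (1). The boundary is compact Hausdorff in the appropriate topology, for instance the ends compactification when $T$ is not locally finite.

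The main obstacle is this compactness and topology issue in case (2), together with verifying extreme proximality for possibly non-locally-finite trees. If it becomes problematic, I would instead prove MIF directly on the tree. The approach would be to find hyperbolic elements $h$ whose axes avoid the fixed sets of the coefficients $g_j$, and to use the standard ping-pong on half-trees to show that $w(h^N)$ is hyperbolic for large $N$, hence nontrivial.
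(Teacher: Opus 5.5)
Your proposal follows the paper's route: show that each $\Gamma_i$ is a countable nontrivial mixed identity-free group, then invoke Theorem~\ref{thm:ira-mif-intro} (that is, Theorems~\ref{thm:ira-mif} and~\ref{thm:ira-products-mif}). The paper does not reprove mixed identity-freeness but cites \cite[Proposition~3.4 and Theorem~3.6]{ABGM2026} for case~(1) and \cite[Theorem~B]{FLMMS2022} for case~(2), not a Hull--Osin criterion, and this is the safer choice: your reduction of (2) to (1) is only clean for locally finite trees (otherwise $\partial T$ is not compact), and your fallback ping-pong must use topological freeness on $\partial T$ to pick a hyperbolic $h$ whose axis endpoints $\xi_h^\pm$ satisfy $g_j\{\xi_h^+,\xi_h^-\}\cap\{\xi_h^+,\xi_h^-\}=\emptyset$ for every coefficient $g_j$, since avoiding fixed subtrees says nothing when some $g_j$ is hyperbolic.
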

In particular, the Baumslag--Solitar groups $\operatorname{BS}(m,n)$ with $|m|,|n|\geq2$ and $|m|\neq|n|$, and their finite products, are IRA rigid (see Corollary~\ref{cor:ira-boundaries-trees}).

Theorem~\ref{thm:ira-mif-intro} also gives amenable examples of IRA rigid groups beyond this class of $C^*$-simple groups. There exists a two-generated infinite elementary amenable IRA rigid group and, for each prime $p$, an infinite locally finite $p$-group which is IRA rigid.
We also obtain continuum many pairwise nonisomorphic finitely generated nonamenable IRA rigid groups which contain no nonabelian free subgroup and have nontrivial amenable radical (see Corollary~\ref{cor:jacobson}).
Moreover, every countable amenable group embeds into a finitely generated amenable IRA rigid group, and the overgroup can be chosen elementary amenable when the given group is elementary amenable (see Corollary~\ref{cor:embedding}).

For a mixed identity-free group $\Gamma$, our proof uses positive-definite functions and their associated trace-preserving UCP maps to construct an IRA of $\Gamma*\langle t\rangle$ from an IRA of $\Gamma$.
We then apply the acylindrically hyperbolic case~(Theorem~\ref{thm:hyperbolic_tf}) and recover the original IRA by intersecting with $L(\Gamma)$.

We also spend some time discussing the structural properties of Boomerang subalgebras and in particular deal with amenable Boomerang subalgebras or the lack thereof. Boomerang subgroups generalize normal subgroups (for trivial reasons).
Since a big chunk of the class of groups considered here are $C^*$-simple which admit no nontrivial amenable normal subgroup~\cite{breuillard2017c}, it is natural to ask when the same conclusion holds for amenable boomerang subgroups, and more generally for amenable boomerang subalgebras. 

We answer this question under some natural assumptions on the group, or when they admit certain actions. In particular, we prove that every amenable boomerang subalgebra is trivial when $\Gamma$ has property $P_{\mathrm{nai}}$, admits a faithful extremely proximal action on a compact Hausdorff space with more than two points, or admits an action with property $P_{\mathrm{geo}}$ (see Theorem~\ref{prop:amenable-boomerang}).
The properties $P_{\mathrm{nai}}$ and $P_{\mathrm{geo}}$ were introduced by Bekka--Cowling--de la Harpe \cite{BCD}.
These conclusions strengthen the corresponding results for invariant amenable subalgebras and amenable IRAs in \cite[Theorems~A and~B]{amrutam2309amenable}.
\subsection*{Organization of the paper} In addition to this introduction, this paper has five other sections. Section~\ref{sec:prelim} collects the preliminaries, including the correspondence between positive-definite functions and trace preserving UCP maps (Proposition~\ref{prop:ira-ucp-correspondence}).
In Section~\ref{sec:prep}, we study basic properties of boomerang subalgebras, their associated positive-definite functions, and finite dimensional and amenable boomerang subalgebras. In particular, we prove Theorem~\ref{prop:amenable-boomerang} here.
Section~\ref{sec:higherrank} is devoted to classifying the IRAs of $K$-split higher rank lattices. It should not come as a surprise then that we prove Theorem~\ref{thm:ksplit} in this section.
In Section~\ref{sec:boomerang-hyperbolic}, we classify boomerang subalgebras of acylindrically hyperbolic groups and their finite products, and end up proving Theorem~\ref{thm:hyperbolic_tf} and Theorem~\ref{thm:hyperbolic_tf_boom}.
Finally, in Section~\ref{sec:ira-mif}, we prove IRA rigidity for mixed identity-free groups and their finite products. That is to say, we finish by proving Theorem~\ref{thm:ira-mif-intro} and Theorem~\ref{thm:ira-boundaries-trees-intro}, and obtain the Corollaries~\ref{cor:jacobson} and ~\ref{cor:embedding}.
\subsection*{Recent Developments}
While this draft was being readied, there were two recent developments that deserve particular mention.

The first is joint work of Amrutam, Klisse and Pagliero
\cite{AKP:inner-selfless}, in which the relative ISR
property was first established for all countable nontrivial mixed
identity-free groups. In particular, this includes groups admitting
topologically free minimal extremely proximal actions on compact
Hausdorff spaces with more than two points, and the Baumslag--Solitar
groups $\operatorname{BS}(m,n)$ with $|m|,|n|\geq2$ and
$|m|\neq|n|$. The Baumslag--Solitar result answers the corresponding
part of a question of Dudko and Jiang
\cite[Question~6.1]{dudko2024character}.
The techniques there are different from those employed here: the
proof uses tracial ultrapowers, and that argument does not directly
extend to invariant random algebras.

The second is the recent work of Fima, Oppelmayer and Tao
\cite{FOT2026}, who introduce boomerang $C^*$-subalgebras using the
Wijsman topology. Among other things, they show that every boomerang subalgebra of
$C_r^*(\Gamma)$ comes from a boomerang subgroup when $\Gamma$ is a
torsion-free non-amenable hyperbolic group
\cite[Theorems~2.13 and~6.8]{FOT2026}.
These rigidity results are in the $C^*$-algebraic setting, whereas
ours concern group von Neumann algebras, and the proofs use
different techniques.
\subsection*{AI Declaration}
The mathematical ideas, results, and proofs are due to
the authors. ChatGPT (OpenAI) was used for grammatical corrections,
assistance with exposition and references, and feedback on some
of the authors' arguments. The authors take full responsibility
for the manuscript.
\subsection*{Acknowledgments} This work began in 2024 when T.A. was supported by the European Research Council (ERC) under the European Union’s Horizon Europe research and innovation
programme, Grant agreement No. 101078193 and Israel Science Foundation grant 1175/18. Much water has flown under the bridge since then. T.A. is currently supported by National Science Centre, Poland Sonata, grant number 2025/59/D/ST1/03117.
\section{Preliminaries and setting}
\label{sec:prelim}
Throughout this paper, $\Gamma$ is a countable group, and $\tau_0$ is the canonical
trace on $L(\Gamma)$. For any $x\in L(\Gamma)$, we write
$\operatorname{supp}(x)=\{g\in\Gamma:\tau_0(x\lambda(g)^*)\neq0\}$. Also, we shall use $\|x\|_2=\tau(x^*x)^{1/2}$, the $\|\cdot\|_2$-norm associated with the canonical trace. We also write $\tau=\tau_0$ and use $\lambda(g)$ for the canonical group unitaries.
All subalgebras in this paper are unital von Neumann subalgebras.  
\subsection{Algebras}
For a subalgebra $\cA\leq L(\Gamma)$, let
$\E_\cA\colon L(\Gamma)\to\cA$ be the unique $\tau$-preserving
conditional expectation \cite[Chapter~1]{brown2008cstar}. We equip $\SA(L(\Gamma))$ with the Effros--Mar\'echal topology. This topology is Polish~\cite{marechal1973topologie, FLMP:marechal}; it is not compact when $\Gamma$ is infinite~\cite[Theorem~A]{AFJ:uniformly-recurrent}.
In the finite tracial setting, $\cA_i\to\cA$ in the
Effros--Mar\'echal topology if and only if
$\|\E_{\cA_i}(x)-\E_\cA(x)\|_2\to0$ for every $x\in L(\Gamma)$;
see \cite[Corollary~2.12]{haagerup1998effros} and
\cite[Proposition~2.2]{amrutam2026confined}.

The following fact is well known to experts. A proof can be found
in \cite[Lemma~2.3]{amrutam2026confined}.
\begin{lemma}\label{lem:conjugate-expectation}
Let $\cA\leq L(\Gamma)$ be a subalgebra and $t\in\Gamma$.
For every $x\in L(\Gamma)$,
$\E_{\lambda(t)\cA\lambda(t)^*}(x) =\lambda(t)\E_\cA\bigl(\lambda(t)^*x\lambda(t)\bigr)\lambda(t)^*$.
\end{lemma}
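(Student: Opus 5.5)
The plan is to verify the identity by checking that the right-hand side has the two properties that characterize the trace-preserving conditional expectation onto $\lambda(t)\cA\lambda(t)^*$. Write $u=\lambda(t)$ and $\cB=u\cA u^*$; this is again a unital von Neumann subalgebra of $L(\Gamma)$, since $\operatorname{Ad}(u)$ is a normal $*$-automorphism of $L(\Gamma)$. Define $\Phi(x)=u\,\E_\cA(u^*xu)\,u^*$. By uniqueness of the $\tau$-preserving conditional expectation, it suffices to show:
\begin{enumerate}
\item $\Phi$ maps $L(\Gamma)$ into $\cB$ and fixes $\cB$ pointwise;
\item $\tau(\Phi(x)b)=\tau(xb)$ for all $x\in L(\Gamma)$ and $b\in\cB$.
\end{enumerate}
Equivalently, (2) says $\Phi(x)$ is the $\|\cdot\|_2$-orthogonal projection of $x$ onto $\cB$, i.e.\ $x-\Phi(x)\perp L^2(\cB)$.

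Step (1) is immediate. $\E_\cA$ takes values in $\cA$, so $u\E_\cA(\cdot)u^*$ takes values in $\cB$. For $b=uau^*$ with $a\in\cA$ we have $u^*bu=a$, hence $\Phi(b)=u\E_\cA(a)u^*=uau^*=b$.

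Step (2) uses only the traciality of $\tau$. For $b=uau^*$,
\[
\tau(\Phi(x)b)=\tau\bigl(u\E_\cA(u^*xu)au^*\bigr)=\tau\bigl(\E_\cA(u^*xu)a\bigr).
\]
Since $\E_\cA$ is the $\tau$-preserving expectation and is $\cA$-bimodular, this equals $\tau(u^*xua)=\tau(xuau^*)=\tau(xb)$. Thus $\Phi$ is a $\tau$-preserving idempotent onto $\cB$ satisfying the defining orthogonality property, and so $\Phi=\E_\cB$.

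Alternatively, one can check directly that $\Phi$ is a $\tau$-preserving conditional expectation: it is unital, positive, $\cB$-bimodular (since $\E_\cA$ is $\cA$-bimodular), and $\tau\circ\Phi=\tau$ by traciality. Then invoke uniqueness. There is no real obstacle here; the only point needing care is to state precisely which uniqueness criterion is used. The orthogonality characterization $\tau(\E_\cB(x)b)=\tau(xb)$ for all $b\in\cB$ determines $\E_\cB(x)$ uniquely, because $\cB$ is $\|\cdot\|_2$-dense in $L^2(\cB)$ and $\tau$ is faithful.
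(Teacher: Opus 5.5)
Your argument is correct and complete. Checking that $\lambda(t)\E_\cA(\lambda(t)^*x\lambda(t))\lambda(t)^*$ lies in $\lambda(t)\cA\lambda(t)^*$ and satisfies $\tau(\,\cdot\,b)=\tau(xb)$ for all $b$ in that algebra identifies it as the trace-preserving expectation: test against $b=\bigl(\lambda(t)\E_\cA(\lambda(t)^*x\lambda(t))\lambda(t)^*-\E_{\lambda(t)\cA\lambda(t)^*}(x)\bigr)^*$ and use faithfulness of $\tau$. The paper does not prove this lemma itself but quotes it from \cite[Lemma~2.3]{amrutam2026confined}, and your uniqueness argument is the standard proof of this fact.
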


\subsection{Invariant random algebras and positive-definite functions}
\label{subsec:char}
A function $\phi\colon\Gamma\to\mathbb C$ is positive-definite if
$\sum_{i,j=1}^n c_i\overline{c_j}\phi(g_j^{-1}g_i)\geq0$
for every $g_1,\ldots,g_n\in\Gamma$ and $c_1,\ldots,c_n\in\mathbb C$.
It is normalized if $\phi(e)=1$. A normalized positive-definite
function which is conjugation invariant is called a character.

Given a von Neumann subalgebra $\cA\leq L(\Gamma)$, we associate to it the
normalized positive-definite function defined by
$$\phi_\cA(g)=\tau_0\bigl(\E_\cA(\lambda(g))\lambda(g)^*\bigr) =\|\E_\cA(\lambda(g))\|_2^2, ~g\in\Gamma.$$
The second equality follows because $\E_\cA$ is the orthogonal
projection onto $L^2(\cA,\tau_0)$. This function has been studied in
\cite{jiang2024example,dudko2024character,amrutam2026confined}
and, more recently, in \cite{amrutam-relative-isr}.
For positive definiteness, see
\cite[Proposition~3.2]{jiang2024example} and
\cite[Remark~4.2(5)]{amrutam2026confined}.
If $\cA$ is $\Gamma$-invariant, then $\phi_\cA$ is a character.

Recall that an invariant random von Neumann subalgebra is a $\Gamma$-invariant
Borel probability measure on $\SA(L(\Gamma))$. Given an IRA $\mu$,
the associated function $\phi_\mu$ defined by
$$\phi_\mu(g)=\int_{\SA(L(\Gamma))}\phi_\cA(g)\,d\mu(\cA), ~g\in\Gamma$$
is a character. Indeed, normalization and positive definiteness
are preserved by integration. For $s,g\in\Gamma$,
Lemma~\ref{lem:conjugate-expectation} gives
$\phi_\cA(sgs^{-1})=\phi_{\lambda(s^{-1})\cA\lambda(s)}(g)$.
Integrating this equality and using the invariance of $\mu$ gives
$\phi_\mu(sgs^{-1})=\phi_\mu(g)$.
\subsection{Positive-definite functions and UCP maps}\label{subsec:ira-ucp-correspondence}
We now establish a one-to-one correspondence between certain positive definite functions on the product group $\Gamma\times \Gamma$ and trace preserving UCP maps on $L(\Gamma)$. Even though this correspondence might be well-known to the experts, we add the details here for completeness. We follow the bimodule construction in \cite[Section~13.1.2 and Lemma~13.1.8]{AP:II1}.

Recall that the compact metrizable space $\operatorname{PD}_1(\Gamma\times\Gamma)$ consists of the normalized positive-definite functions on $\Gamma\times\Gamma$, with pointwise convergence and action $(s\cdot\psi)(g,h)=\psi(s^{-1}gs,s^{-1}hs)$. 
\begin{proposition}\label{prop:ira-ucp-correspondence}
Let $\Gamma$ be a countable group and let $\psi\in\operatorname{PD}_1(\Gamma\times\Gamma)$ satisfy $\psi(g,e)=\psi(e,g)=\delta_{g,e}$ for every $g\in\Gamma$.
There is a unique normal trace-preserving UCP map $T_\psi:L(\Gamma)\to L(\Gamma)$ such that
$\tau_0(T_\psi(\lambda(g))\lambda(h)^*)=\psi(g,h)$
for every $g,h\in\Gamma$.
Conversely, every normal trace-preserving UCP map on $L(\Gamma)$ arises in this way.
The correspondence $\psi\mapsto T_\psi$ is equivariant and is a homeomorphism for pointwise convergence on the functions and point ultraweak topology on the normal trace-preserving UCP maps.
\end{proposition}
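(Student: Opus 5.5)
The plan is to build $T_\psi$ from the GNS construction of $\psi$, following the bimodule picture of \cite[Section~13.1.2]{AP:II1}. Since $\psi$ is positive definite on $\Gamma\times\Gamma$, the GNS construction gives a Hilbert space $\mathcal H_\psi$, a unitary representation $\pi$ of $\Gamma\times\Gamma$, and a cyclic unit vector $\xi$ with $\psi(g,h)=\langle\pi(g,h)\xi,\xi\rangle$. Setting $\lambda(g)\cdot\eta\cdot\lambda(h):=\pi(g,h^{-1})\eta$ makes $\mathcal H_\psi$ a $\bC\Gamma$-bimodule. The hypothesis $\psi(g,e)=\delta_{g,e}$ says $\langle \lambda(g)\xi,\xi\rangle=\tau_0(\lambda(g))$, and likewise for the right action. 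Hence $x\mapsto x\xi$ and $x\mapsto \xi x$ are isometries $L^2(L(\Gamma))\to\mathcal H_\psi$ on $\bC\Gamma$, and the left and right actions extend to normal representations of $L(\Gamma)$ and $L(\Gamma)^{op}$. So $\xi$ is a bi-tracial vector and $\mathcal H_\psi$ is an $L(\Gamma)$-bimodule.

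Next I define $T_\psi$ by duality. For $x\in L(\Gamma)$, the form $(y,z)\mapsto\langle x\xi y,\xi z\rangle$ is bounded with respect to $\|y\|_2\|z\|_2$, since $\xi$ is right tracial. It therefore defines an operator on $L^2$ commuting with the left action, that is, an element $T_\psi(x)$ of $(JL(\Gamma)J)'=L(\Gamma)$. This is exactly \cite[Lemma~13.1.8]{AP:II1}. The required properties then follow directly:
\begin{itemize}
\item the formula $\tau_0(T_\psi(x)y^*)=\langle x\xi,\xi y\rangle$ gives $\tau_0(T_\psi(\lambda(g))\lambda(h)^*)=\psi(g,h)$, up to the convention for $h$ versus $h^{-1}$, which I fix in the definition of $\pi$;
\item complete positivity holds because $T_\psi(x)=V^*xV$-type expressions of this kind are positive at matrix levels;
\item $T_\psi(1)=1$ and $\tau_0\circ T_\psi=\tau_0$ come from $\xi$ being left tracial;
\item normality holds because $x\mapsto x\xi$ is normal.
\end{itemize}
Uniqueness is immediate: $T_\psi(\lambda(g))$ is determined by its Fourier coefficients, and $T_\psi$ is determined by its values on $\lambda(g)$, by normality and density of $\bC\Gamma$.

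For the converse, given a normal trace-preserving UCP map $T$, I set $\psi(g,h)=\tau_0(T(\lambda(g))\lambda(h)^*)$. Positive definiteness follows from the Stinespring/bimodule $L^2$-completion of $L(\Gamma)\otimes_T L(\Gamma)$ with $\langle a\otimes b,c\otimes d\rangle=\tau_0(T(c^*a)bd^*)$, which is positive since $T$ is completely positive. The normalizations come from $T(1)=1$ and $\tau_0\circ T=\tau_0$, giving $\psi(g,e)=\tau_0(\lambda(g))$ and $\psi(e,h)=\tau_0(\lambda(h)^*)$. Equivariance is a direct computation: conjugating $T$ by $\mathrm{Ad}\,\lambda(s)$ corresponds to $s\cdot\psi$.

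The homeomorphism claim is the step needing care. If $T_i\to T$ point-ultraweakly, the Fourier coefficients converge, so $\psi_i\to\psi$ pointwise. For the converse direction, pointwise convergence gives $\tau_0(T_i(x)y)\to\tau_0(T(x)y)$ for $x,y\in\bC\Gamma$. To extend this I use $\|T_i(x)\|\le\|x\|$ and the $\|\cdot\|_2$-contractivity $\|T_i(x)\|_2\le\|x\|_2$, which follows from Kadison–Schwarz and trace preservation. An $\varepsilon/3$ argument then extends the convergence to $x\in L(\Gamma)$ via Kaplansky density in $\|\cdot\|_2$, and to $y\in L^1$. Boundedness makes ultraweak convergence equivalent to convergence against a dense set of $y$. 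I expect this density/uniform-boundedness argument to be the main obstacle.
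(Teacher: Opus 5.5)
\textbf{The construction.} Your construction is the paper's. The isometry $\hat y\mapsto\xi y$ is exactly $V\delta_g=\pi(e,g^{-1})\xi$, your $T_\psi(x)$ is $V^*\pi_1(x)V$, and your converse is the same complete-positivity computation.

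The one step you assert without a valid reason is that the left action of $\bC\Gamma$ extends to a \emph{normal} representation of $L(\Gamma)$ on all of $\mathcal H_\psi$. The isometry $x\mapsto x\xi$ only identifies the cyclic subspace $\overline{\bC\Gamma\,\xi}$ with $\ell^2(\Gamma)$; it says nothing about the rest of $\mathcal H_\psi$. Without this extension, neither your duality definition of $T_\psi(x)$ for $x\in L(\Gamma)$ (the bound $\|x\xi y\|\le\|x\|\,\|y\|_2$) nor the normality of $T_\psi$ is available.

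What is needed is that $\xi$ is cyclic for the two-sided action and that the right action commutes with the left. The paper argues as follows. On $A=\{\pi(g,e)\}''$ the vector state $\varphi=\langle\,\cdot\,\xi,\xi\rangle$ is tracial, since $\varphi(\pi(g,e))=\delta_{g,e}$. If $\varphi(a^*a)=0$, traciality gives $a\pi(g,e)\xi=0$, hence $a\pi(g,h)\xi=\pi(e,h)a\pi(g,e)\xi=0$. So $\varphi$ is faithful and $(A,\varphi)\cong(L(\Gamma),\tau_0)$, which yields the normal $\pi_1$. Equivalently, each $\overline{\bC\Gamma\,\xi\lambda(h)}$ is a copy of $\ell^2(\Gamma)$, these span $\mathcal H_\psi$, and a representation spanned by normal subrepresentations is normal.

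There are also two small slips. First, $T_\psi(x)$ commutes with the \emph{right} action, not the left; that is what places it in $R(\Gamma)'=L(\Gamma)$, and the paper checks it against the right regular unitaries $R_t$. Second, unitality comes from $V^*V=1$, i.e.\ from $\psi(e,g)=\delta_{g,e}$, whereas trace preservation comes from $\psi(g,e)=\delta_{g,e}$.

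\textbf{The homeomorphism.} This is where you genuinely take a different route. The paper proves only that $T\mapsto\psi_T$ is continuous. It then uses compactness of the normal trace-preserving UCP maps in the point ultraweak topology \cite[Remark~4.6]{FLMP:ucp} to conclude that the inverse is continuous.

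You prove continuity of $\psi\mapsto T_\psi$ directly, and your argument is sound. By Kadison--Schwarz and trace preservation, $|\tau_0(T_i(x-x_0)y)|\le\|x-x_0\|_2\|y\|_2$ uniformly in $i$. So pointwise convergence $\psi_i\to\psi$ gives $\tau_0(T_i(x)y)\to\tau_0(T(x)y)$ for $x\in L(\Gamma)$ and $y\in\bC\Gamma$. The bound $\|T_i(x)\|\le\|x\|$ then extends this to all $y\in L^1(L(\Gamma),\tau_0)$, which is ultraweak convergence. Kaplansky density is not actually needed, since only $\|x-x_0\|_2$ enters.

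Your route is self-contained and avoids the external compactness theorem; the paper's is shorter given that citation.
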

\begin{proof}
Let $\psi\in\operatorname{PD}_1(\Gamma\times\Gamma)$ be such that  $\psi(g,e)=\psi(e,g)=\delta_{g,e}$ for every $g\in\Gamma$. We equip $\mathbb C[\Gamma\times\Gamma]$ with the positive semidefinite form
$\langle[g,h],[k,l]\rangle=\psi(k^{-1}g,l^{-1}h)$.
After the usual completion procedure, we obtain the associated Hilbert space $\mathcal H$.
The operators $\pi(s,t)[g,h]=[sg,th]$ define a unitary representation of $\Gamma\times\Gamma$.
For $\xi=[e,e]$, one has $\psi(g,h)=\langle\pi(g,h)\xi,\xi\rangle$, and the vectors $\pi(g,h)\xi$ span a dense subspace of $\mathcal H$.
The two coordinate representations commute.
Set $A=\{\pi(g,e):g\in\Gamma\}''$ and $\varphi(a)=\langle a\xi,\xi\rangle$ for $a\in A$.
Since $\varphi(\pi(g,e))=\delta_{g,e}$, the state $\varphi$ is tracial on the group algebra, hence on $A$ by separate ultraweak continuity of multiplication.

We claim that $\varphi$ is faithful. Indeed, if $\varphi(a^*a)=0$, then
$$\|a\pi(g,e)\xi\|^2 =\varphi(\pi(g,e)^*a^*a\pi(g,e)) =\varphi(a^*a)=0.$$
Since $a$ commutes with $\pi(e,h)$, this gives $a\pi(g,h)\xi=0$ for all $g,h\in\Gamma$, and therefore $a=0$.
Thus $(A,\varphi)$ is a finite tracial von Neumann algebra.
The map $\delta_g\mapsto\widehat{\pi(g,e)}$ extends to a unitary $\ell^2(\Gamma)\to L^2(A,\varphi)$ intertwining the left regular representation with left multiplication by $\pi(g,e)$.
It induces a unital $*$-isomorphism $\pi_1:L(\Gamma)\to A\subseteq B(\mathcal H)$ satisfying $\pi_1(\lambda(g))=\pi(g,e)$ and $\varphi\circ\pi_1=\tau_0$. Note that  $\pi_1$ is normal. Indeed, if $0\leq x_i\uparrow x$, then
$\varphi\left(\pi_1(x)-\sup_i\pi_1(x_i)\right) =\tau_0(x)-\sup_i\tau_0(x_i)=0$,
so that (using the faithfulness of $\varphi$) $\pi_1(x)=\sup_i\pi_1(x_i)$.

Define $V:\ell^2(\Gamma)\to\mathcal H$ by $V\delta_g=\pi(e,g^{-1})\xi$.
Since
$\langle V\delta_g,V\delta_h\rangle =\psi(e,hg^{-1})=\delta_{g,h}$,
it follows that $V$ is an isometry.
Define, initially with values in $B(\ell^2(\Gamma))$, $T_\psi(x)=V^*\pi_1(x)V$.
This map is normal and UCP, since $\pi_1$ is normal and $V$ is an isometry.
Let $R_t\delta_g=\delta_{gt^{-1}}$ be the right regular unitaries.
For $s,t,g,x\in\Gamma$, we have
\[
\begin{aligned}
\langle T_\psi(\lambda(s))R_t\delta_g,\delta_x\rangle
&=\langle V^*\pi(s,tg^{-1})\xi,\delta_x\rangle\\
&=\langle\pi(s,tg^{-1})\xi,\pi(e,x^{-1})\xi\rangle\\
&=\psi(s,xtg^{-1}),\\
\langle R_tT_\psi(\lambda(s))\delta_g,\delta_x\rangle
&=\langle V^*\pi(s,g^{-1})\xi,R_t^*\delta_x\rangle\\
&=\langle\pi(s,g^{-1})\xi,\pi(e,t^{-1}x^{-1})\xi\rangle\\
&=\psi(s,xtg^{-1}).
\end{aligned}
\]
Thus $T_\psi(\lambda(s))R_t=R_tT_\psi(\lambda(s))$.
Normality of $T_\psi$ and ultraweak density of $\mathbb C\Gamma$ force $T_\psi(a)R_t=R_tT_\psi(a)$ for every $a\in L(\Gamma)$.
Hence $T_\psi(a)\in R(\Gamma)'=L(\Gamma)$, where $R(\Gamma)=\{R_t:t\in\Gamma\}''$.
Moreover,
$\tau_0(T_\psi(x)) =\langle\pi_1(x)\xi,\xi\rangle =\varphi(\pi_1(x))=\tau_0(x)$.
Finally,
$\tau_0(T_\psi(\lambda(g))\lambda(h)^*) =\langle\pi(g,e)\xi,\pi(e,h^{-1})\xi\rangle =\psi(g,h)$,
which proves the claim. The Fourier coefficients determine $T_\psi$ on the group algebra, hence everywhere by normality.

Conversely, let $T:L(\Gamma)\to L(\Gamma)$ be a normal, trace-preserving UCP map. Define $\psi(g,h)=\tau_0(T(\lambda(g))\lambda(h)^*)$.
For $g_i,h_i\in\Gamma$ and $c_i\in\mathbb C$, complete positivity gives
$\sum_{i,j}\overline{c_i}c_j\psi(g_i^{-1}g_j,h_i^{-1}h_j) =\tau_0\left(\sum_{i,j}\overline{c_i}c_j \lambda(h_i)T(\lambda(g_i)^*\lambda(g_j))\lambda(h_j)^*\right) \geq0$.
Unitality and trace preservation give $\psi(g,e)=\psi(e,g)=\delta_{g,e}$.
Thus $\psi\in\operatorname{PD}_1(\Gamma\times\Gamma)$, and uniqueness gives $T=T_\psi$.

Moreover, the inverse map
$T\longmapsto\bigl((g,h)\longmapsto\tau_0(T(\lambda(g))\lambda(h)^*)\bigr)$
is continuous.
The space of normal trace-preserving UCP maps is compact for the point ultraweak topology \cite[Remark~4.6]{FLMP:ucp}, so this continuous bijection has a continuous inverse.
The coefficient identity and uniqueness give
$T_{s\cdot\psi}
=\operatorname{Ad}(\lambda(s))\circ T_\psi\circ\operatorname{Ad}(\lambda(s)^*)$.
Thus the correspondence is equivariant.    
\end{proof}
\subsection{Thickness and vanishing}\label{subsec:thickness-vanishing} We recall the thickness and vanishing principle from \cite{amrutam-relative-isr} (which itself was motivated by \cite{dudko2024character}).
Let $\phi\colon\Gamma\to\mathbb C$ be a normalized positive-definite
function. A set $K\subseteq\Gamma$ is \emph{$\phi$-thick} if, for every
$g\in\Gamma\setminus\{e\}$, there is a sequence $(h_i)\subseteq\Gamma$
such that
$h_jg^{-1}h_j^{-1}h_igh_i^{-1}\in K\quad(i\neq j)$, and $\phi(h_igh_i^{-1})\longrightarrow\phi(g)$ as $i\to\infty$.

The following is the GNS vanishing argument of \cite[Lemma~3.6]{amrutam-relative-isr}.
We include the proof for completeness.
\begin{lemma}\label{lem:thickness-vanishing}
Let $\phi$ be a normalized positive-definite function on $\Gamma$, and let $K\subseteq\Gamma$ be $\phi$-thick.
If $\phi|_K=0$, then $\phi=\delta_e$.
\end{lemma}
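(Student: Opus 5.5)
We pass to the GNS triple $(\mathcal H_\phi,\pi_\phi,\xi_\phi)$ of $\phi$, so that $\phi(g)=\langle\pi_\phi(g)\xi_\phi,\xi_\phi\rangle$ and $\|\xi_\phi\|=1$. Fix $g\neq e$; the goal is $\phi(g)=0$. Apply $\phi$-thickness of $K$ to $g$ to obtain a sequence $(h_i)$ with $h_jg^{-1}h_j^{-1}h_igh_i^{-1}\in K$ for $i\neq j$ and $\phi(h_igh_i^{-1})\to\phi(g)$. Consider the vectors
\[
\eta_i=\pi_\phi(h_igh_i^{-1})\xi_\phi\in\mathcal H_\phi .
\]
They are unit vectors, since $\pi_\phi$ is unitary.

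The key computation is the Gram matrix of the $\eta_i$. For $i\neq j$,
\[
\langle\eta_i,\eta_j\rangle=\langle\pi_\phi(h_jg^{-1}h_j^{-1}h_igh_i^{-1})\xi_\phi,\xi_\phi\rangle=\phi(h_jg^{-1}h_j^{-1}h_igh_i^{-1})=0,
\]
because that element lies in $K$ and $\phi|_K=0$. So $(\eta_i)$ is an orthonormal sequence.

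By Bessel's inequality, $\sum_i|\langle\eta_i,\xi_\phi\rangle|^2\le\|\xi_\phi\|^2=1$. Hence $\langle\eta_i,\xi_\phi\rangle\to0$. But $\langle\eta_i,\xi_\phi\rangle=\phi(h_igh_i^{-1})$, which tends to $\phi(g)$, so $\phi(g)=0$. Together with $\phi(e)=1$, this gives $\phi=\delta_e$.

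The only point needing care is the order of multiplication in the inner product, so that the group element produced by $\langle\eta_i,\eta_j\rangle$ matches the word in the definition of thickness. With $\langle\pi(a)\xi,\pi(b)\xi\rangle=\phi(b^{-1}a)$, taking $a=h_igh_i^{-1}$ and $b=h_jgh_j^{-1}$ gives $b^{-1}a=h_jg^{-1}h_j^{-1}h_igh_i^{-1}$, as required.

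A second minor point is that the $h_i$ need not be distinct. Orthogonality for $i\neq j$ already forces $\eta_i\neq\eta_j$, so the Bessel argument applies directly and no further argument is needed.
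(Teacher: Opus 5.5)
Your proof is correct and follows the paper's argument: pass to the GNS representation, use thickness together with $\phi|_K=0$ to make $\pi(h_igh_i^{-1})\xi$ an orthonormal sequence, and apply Bessel's inequality to get $\phi(g)=0$. The only difference is cosmetic. You read off $\phi(h_igh_i^{-1})\to0$ directly from the convergence of $\sum_i|\phi(h_igh_i^{-1})|^2$, whereas the paper reaches the same contradiction by an explicit count with a threshold index $i_0$.
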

\begin{proof}
Let $(\pi,\mathcal H,\xi)$ be the GNS representation of $\phi$,
so that $\|\xi\|=1$ and
$\phi(t)=\langle\pi(t)\xi,\xi\rangle$ for every $t\in\Gamma$.
Fix $g\neq e$, and choose $(h_i)$ as in the definition of $\phi$-thickness. Set $v_i=\pi(h_igh_i^{-1})\xi$.
Since $\pi(h_igh_i^{-1})$ is unitary, $\|v_i\|=1$. If $i\neq j$,
then
\begin{align*}
 \langle v_i,v_j\rangle
 &=\langle\pi(h_igh_i^{-1})\xi,
                \pi(h_jgh_j^{-1})\xi\rangle\\
 &=\langle\pi(h_jg^{-1}h_j^{-1}h_igh_i^{-1})\xi,\xi\rangle\\
 &=\phi(h_jg^{-1}h_j^{-1}h_igh_i^{-1})=0,
\end{align*}
since the element in the last line belongs to $K$. Thus $(v_i)$
is an orthonormal sequence. For every $r\geq1$, Bessel's inequality
now gives
$\sum_{i=1}^r|\phi(h_igh_i^{-1})|^2 =\sum_{i=1}^r|\langle v_i,\xi\rangle|^2\leq\|\xi\|^2=1$.
If $\phi(g)\neq0$, choose $i_0$ such that
$|\phi(h_igh_i^{-1})|\geq|\phi(g)|/2$ for every $i\geq i_0$.
For $r\geq i_0$, the preceding bound would imply
$(r-i_0+1)|\phi(g)|^2/4\leq1$, which is impossible as
$r\to\infty$. Therefore $\phi(g)=0$. Since $g\neq e$ was arbitrary
and $\phi(e)=1$, we obtain $\phi=\delta_e$.
\end{proof}
\subsection{\texorpdfstring{$K$}{}-Split higher rank lattices} \label{ksplit}
We will be interested in the case where $\Gamma$ is a split, higher rank $S$-arithmetic group. To set the notation let $K$ be a number field with ring of integers $\cO$ and $S$ a finite set of places of $K$ containing all of the Archimedean ones. Let $\bfG$ be a connected  absolutely simple adjoint algebraic group that is defined and split over $K$. Assume that $\bfG$ is of higher rank in the sense that 
$\sum_{v \in S} \rk_{K_v}(\bfG) \ge 2$.
We fix a faithful $K$-representation $\bfG \rightarrow \GL_n$ and hence identify $\bfG$ with its image under this representation. Let $\Gamma < \bfG(K)$ be any subgroup that is commensurable to $\bfG(\cO_S) = \bfG(K) \cap \GL_n(\cO_S)$. 

To fix notation, let $\bfB$ be a Borel subgroup, $\bfT< \bfB$ a maximal split torus and $\bfU$ the unipotent radical of $\bfB$. We denote by $\Phi$ the corresponding choice of root system, by $\Psi \subset \Phi$ the positive basis corresponding to the chosen Borel subgroup and by $\theta \in \Phi$ the highest root with respect to this basis. Let $x_{\alpha}: \bfG_a \rightarrow \bfU_{\alpha}$ be root parametrizations.  In particular we obtain a map 
$\iota: \SL_2 \rightarrow \bfG, \qquad \iota \begin{pmatrix}1&t\\0&1\end{pmatrix} = x_{\theta}(t), \qquad \iota \begin{pmatrix}1&0\\t&1\end{pmatrix} = x_{-\theta}(t)$.
We denote the image of $\iota$ by $\bfG_\theta$.
The kernel of $\iota$ is contained in $\{\pm I\}$, so $\bfG_\theta$ is isomorphic to $\SL_2$ or $\PGL_2$.
A main feature which considerably simplifies our proof is that we often work inside this rank one group, rather than in the whole group. We denote
$\Gamma_{\theta} = \Gamma \cap \bfG_{\theta}(K) \qquad \Upsilon_{\theta} = \Gamma \cap \bfU_\theta(K)$.

For better clarity, we will, throughout, exemplify our main proof by following this special case. Here $K = \bQ$, $\cO = \bZ$, $S = \{\infty\}$, $\bfG = \PGL_n$. The higher rank assumption here means that $n \ge 3$. The Borel group $\bfB$ is the image of the upper triangular matrices, $\bfT$ is the image of the diagonal matrices, and $\bfU$ is the image of the unipotent upper triangular matrices. The roots are $\alpha_{i,j}(\overline{\diag(t_1,\ldots,t_n)})=t_i/t_j$ for $i\neq j$. The maps $x_{i,j}(t)$ give the elementary unipotent matrices with $1$ along the diagonal and $t$ in the $(i,j)$-entry. The highest root here is $\theta=\alpha_{1,n}$ and $-\theta = \alpha_{n,1}$. Here $\Gamma$ is any group commensurable to the arithmetic lattice $\PSL_n(\bZ)$. 

When referring to this example we use the overline notation $\overline{A}$ to denote the image of the matrix $A \in \SL_n(\bZ)$ modulo the center of $\SL_n(\bZ)$. Similarly for a nonzero vector $x \in \bR^n \setminus \{0\}$ we denote by $\overline{x}$ its image in the projective space $\bP \bR^n$.

Our proof relies heavily on strong rigidity results for these groups, which give a complete classification of their ergodic IRSs and extremal characters.
We now recall these results.
\begin{theorem}[Nevo--Stuck--Zimmer \cite{stuck1994stabilizers,NSZ:intermediate_factor}; \cite{DGGHL:char_rigid}, Corollary~1.4 and the paragraph following it] \label{thm:NSZ}
Let $\Gamma$ be as in Subsection \ref{ksplit}. Then every ergodic IRS on $\Sub(\Gamma)$ is either $\nu=\delta_{\{e\}}$ or $\nu=\nu_\Sigma$ for some finite-index subgroup $\Sigma<\Gamma$, where
$\nu_\Sigma=\frac{1}{[\Gamma:N_\Gamma(\Sigma)]}\sum_{gN_\Gamma(\Sigma)\in\Gamma/N_\Gamma(\Sigma)}\delta_{g\Sigma g^{-1}}$.
\end{theorem}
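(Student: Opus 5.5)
This theorem is quoted from the literature, so the plan is to assemble it from the cited results rather than prove it from scratch. The main inputs are the Nevo--Stuck--Zimmer intermediate factor and stabilizer theorems \cite{stuck1994stabilizers,NSZ:intermediate_factor} and the character rigidity results of \cite{DGGHL:char_rigid}. The argument proceeds in three steps.

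\textbf{Step 1: every ergodic IRS is finite or trivial.} Let $\nu$ be an ergodic IRS of $\Gamma$. Realize it as the stabilizer distribution of an ergodic probability measure preserving action $\Gamma\curvearrowright(X,m)$. For instance, one can use the standard construction in \cite{abert2014kesten} of an action whose stabilizer map pushes $m$ to $\nu$. By Stuck--Zimmer (and its $S$-arithmetic extension in \cite{NSZ:intermediate_factor}, valid since $\Gamma$ is an irreducible lattice in the higher rank group $\prod_{v\in S}\bfG(K_v)$), the action is either essentially free or has finite orbits almost everywhere. The hypothesis that $\bfG$ is adjoint is used here: it makes the center of $\Gamma$ trivial, which rules out central stabilizers. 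Hence in the essentially free case $\nu=\delta_{\{e\}}$. In the other case, almost every stabilizer has finite index.

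\textbf{Step 2: the finite-orbit case gives $\nu_\Sigma$.} Suppose almost every stabilizer has finite index. There are only countably many finite-index subgroups, so $\nu$ is supported on a countable set. Ergodicity then forces $\nu$ to be supported on a single conjugacy class $\{g\Sigma g^{-1}\}$. This orbit is finite, with cardinality $[\Gamma:N_\Gamma(\Sigma)]$. Invariance forces the uniform measure on it, which is exactly $\nu_\Sigma$.

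\textbf{Main obstacle.} The difficulty is entirely in Step~1, in verifying that the hypotheses of the stabilizer theorem apply to a subgroup merely commensurable with $\bfG(\cO_S)$. For property (T) cases the original Stuck--Zimmer argument suffices. Groups lacking (T) in some factor (for example products involving rank one $p$-adic factors) need the intermediate factor theorem together with the character rigidity approach of \cite[Corollary~1.4]{DGGHL:char_rigid}. There, the characters $g\mapsto\nu(\{H: g\in H\})$ are classified, which yields the dichotomy directly.
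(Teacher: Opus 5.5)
Your proposal is correct and matches the paper, which also treats this as a quoted result from Stuck--Zimmer, Nevo--Zimmer and \cite[Corollary~1.4]{DGGHL:char_rigid}, with adjointness (trivial centre of $\Gamma$ via Borel density) ruling out central stabilizers. The only unstated input in your Step~2 is that $\Gamma$ is finitely generated (being $S$-arithmetic), so it has countably many finite-index subgroups; alternatively, finiteness of the conjugacy orbits already makes the orbit relation smooth, which forces an ergodic $\nu$ onto a single finite orbit.
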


Closely related is the following character rigidity theorem which gives a complete classification of all extremal characters. The theorem was first proved for $\PSL_n(\bZ)$ by Bekka in \cite{Bekka} and later generalized in \cite{PT:char_rigidity}, \cite{BBHP}, \cite{DGGHL:char_rigid}.
\begin{theorem}[{\cite[Theorem~1.3 and Remark~6.6]{DGGHL:char_rigid}, \cite[Proposition~7.1]{BBHP}}]
\label{thm:char}
Let $\Gamma$ be as in Subsection \ref{ksplit} and $\chi$ be an extremal character of $\Gamma$.  Then either $\chi=\delta_e$ or $\chi$ is the normalized character of an irreducible finite-dimensional representation. Moreover there are only countably many such unitary representations and they all factor via finite quotients of $\Gamma$.
\end{theorem}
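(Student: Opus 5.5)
Since this is a cited rigidity theorem, the plan is to outline the standard route, following Bekka's argument for $\PSL_n(\bZ)$ extended to the $K$-split $S$-arithmetic setting, rather than give a new proof. Let $\chi$ be an extremal character and $(\pi,\mathcal H,\xi)$ its GNS triple. Extremality makes $M=\pi(\Gamma)''$ a finite factor with trace $x\mapsto\langle x\xi,\xi\rangle$.

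The dichotomy comes from restricting $\chi$ to abelian unipotent subgroups normalized by a large part of $\Gamma$. Take the unipotent radical $\bfU_P(\mathcal O_S)$ of a maximal parabolic, or the root group $\Upsilon_\theta$ together with a commuting root subgroup. Restricted to this lattice $\Lambda\cong\mathcal O_S^m$, the function $\chi$ is positive-definite. By Bochner it is the Fourier transform of a probability measure $\mu$ on the compact dual $\widehat\Lambda$. Conjugation invariance of $\chi$ makes $\mu$ invariant under the Levi part $L(\mathcal O_S)\cap\Gamma$, which acts on $\widehat\Lambda$ by automorphisms; the higher rank hypothesis guarantees this group is infinite.

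The key step is a measure rigidity lemma: every $L(\mathcal O_S)$-invariant probability measure on $\widehat\Lambda$ is supported on the torsion (rational) points. First try Bekka's approach: push $\mu$ to a measure on the torsion-free part and use that nontorsion characters have infinite orbits under the Zariski dense Levi action. Then apply a Howe--Moore/Kazhdan style argument, or an Ergodic-decomposition plus Ratner/Benoist--Quint argument, to rule out continuous parts. Granting this, we get a dichotomy.
\begin{itemize}
\item If $\mu$ is the Dirac mass at the trivial character, then $\chi\equiv 1$ on $\Lambda$. So $\pi(\Lambda)$ fixes $\xi$, and since $\xi$ is a trace vector, $\pi(\Lambda)=1$. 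The normal closure of $\Lambda$ then has finite index, by Margulis' normal subgroup theorem and the congruence subgroup property in the $K$-split higher rank case. Hence $\pi$ factors through a finite quotient.
\item Otherwise, the atoms of $\mu$ at nontrivial torsion points, combined with the $\phi$-thickness/vanishing mechanism of Lemma~\ref{lem:thickness-vanishing} applied along conjugates of unipotents, force one of two outcomes. Either $\chi$ vanishes off the center, giving $\chi=\delta_e$ since $\Gamma$ is adjoint and the center is trivial, or some finite-index congruence subgroup of $\Lambda$ acts trivially, and we conclude as in the first case.
\end{itemize}

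Once $\pi$ factors through a finite quotient $\Gamma/N$, the factor $M$ is finite-dimensional and $\chi$ is the normalized character of an irreducible finite-dimensional representation.

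Countability is easy: $\Gamma$ is finitely generated, so it has countably many finite-index normal subgroups, and each finite quotient has finitely many irreducibles. The main obstacle is the measure rigidity on $\widehat\Lambda$ over $\mathcal O_S$ together with the passage from triviality of $\chi$ on unipotents to factorization through a finite quotient. The latter requires the normal subgroup theorem and the congruence subgroup property. This is exactly the content we import from \cite{DGGHL:char_rigid} and \cite{BBHP}.
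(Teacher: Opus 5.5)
\textbf{Verdict: genuine gap.} The paper gives no proof of this theorem. It quotes it from \cite[Theorem~1.3 and Remark~6.6]{DGGHL:char_rigid} and \cite[Proposition~7.1]{BBHP}, and your last sentence concedes the same, so as a citation you agree with the paper. The outline you present as the ``standard route'', however, does not work.

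\textbf{The measure-rigidity lemma is false.} Haar measure on $\widehat\Lambda$ is always invariant under the Levi action. By Bochner it is exactly the spectral measure of $\chi|_\Lambda=\delta_e$, which is the case that must produce $\chi=\delta_e$, and that case occurs since $\Gamma$ is icc. Bekka's input for $\SL_{n-1}(\bZ)\curvearrowright\mathbb T^{n-1}$ is ``Haar plus atoms at rational points''. Your dichotomy (Dirac mass at the trivial character versus atoms at nontrivial torsion points) omits the principal case.

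\textbf{Even the corrected lemma fails on part of the hypothesis range.} Since $\bfG$ is $K$-split, $\sum_{v\in S}\operatorname{rank}_{K_v}\bfG=|S|\operatorname{rank}\bfG$. So $\bfG=\PGL_2$ with $|S|\ge2$ is allowed, e.g.\ $\PGL_2(\bZ[1/p])$, or $\PGL_2(\cO_K)$ with $K$ real quadratic. There the unipotent subgroups are one-dimensional and commensurable with $\cO_S$. Their normalizers act on the dual only by multiplication by $S$-units: $\times p$ on the $p$-adic solenoid, or a hyperbolic automorphism of $\mathbb T^2$. Such actions have uncountably many ergodic invariant measures that are neither Haar nor atomic. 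For $S=\{\infty,p,q\}$ the corrected lemma would contain Furstenberg's $\times p,\times q$ conjecture. So ``the Levi part is infinite'' is far from sufficient, and no Ratner or Benoist--Quint input is available. Presumably this is why \cite{BBHP}, whose title concerns charmenability of arithmetic groups of product type, is needed here rather than a measure classification on $\widehat\Lambda$. Also, for types $E_8$, $F_4$, $G_2$ no proper parabolic subgroup has abelian unipotent radical, so your Bochner set-up itself needs modification.

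\textbf{The two bullets assert, rather than prove, the two substantive steps.}
\begin{itemize}
\item Atoms of $\mu$ at torsion characters give nonzero vectors fixed by a finite-index $\Lambda_0\le\Lambda$. They do not give $\pi(\Lambda_0)=1$. Upgrading invariant vectors to finite image requires combining the trace, the factor property and generation by several unipotent subgroups.
\item Passing from $\chi|_{\Lambda\setminus\{e\}}=0$ to $\chi=\delta_e$ on all of $\Gamma$ does not follow from Lemma~\ref{lem:thickness-vanishing}. You would first need to show that a suitable set of unipotent elements is $\chi$-thick at every $g\neq e$, and you do not address this.
\end{itemize}

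\textbf{What is fine.}
\begin{itemize}
\item $\pi(\Lambda)=1$ follows from $\chi|_\Lambda\equiv1$ via the separating trace vector. Margulis' normal subgroup theorem alone then gives finite image; the congruence subgroup property is not needed.
\item Once $\pi(\Gamma)''\cong M_d(\bC)$, $\chi$ is the normalized character of an irreducible representation.
\item Countability follows from finite generation.
\end{itemize}

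The workable plan is the paper's: quote the two cited results, with the countability remark as you gave it.
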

For the next proposition, $\Gamma$ is a $K$-split higher rank lattice
as in Subsection~\ref{ksplit}. It relates the vanishing of the
conditional expectations to the associated character. It essentially says that if an IRA is supported on non-trivial subalgebras, then the associated conditional expectation cannot vanish on a group element and all its powers. 
\begin{proposition}\label{prop:onedim}
Let $\mu$ be an IRA on $\SA(L(\Gamma))$ and let $s\in\Gamma$ have infinite order. Assume that for $\mu$-a.e.\ $M$ one has $\E_M(\lambda(s^k))=0$ for all $k\neq0$.  Then $\mu=\delta_{\bC}$, i.e.\ $M=\bC1$ for $\mu$-a.e.\ $M$.
\end{proposition}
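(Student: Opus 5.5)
The plan is to pass from the IRA $\mu$ to its character $\phi_\mu$ from Subsection~\ref{subsec:char}, show that $\phi_\mu=\delta_e$ using the character rigidity Theorem~\ref{thm:char}, and then recover $M=\bC1$ almost surely from the vanishing of all the Fourier data.

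\emph{Step 1: decomposing $\phi_\mu$.} The character $\phi_\mu$ is an integral of extremal characters. This is the Choquet decomposition in the simplex of characters; for a countable group the extreme points form a Borel set. By Theorem~\ref{thm:char}, every extremal character is either $\delta_e$ or one of countably many normalized characters $\chi_j$ of finite-dimensional irreducible representations $\pi_j$ factoring through finite quotients of $\Gamma$. So the representing measure is atomic, and
\[
\phi_\mu=c_0\,\delta_e+\sum_j c_j\chi_j,\qquad c_0,c_j\geq0,\quad c_0+\sum_j c_j=1 .
\]

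\emph{Step 2: restricting to $\langle s\rangle\cong\bZ$.} By hypothesis, for $k\neq0$ we have
\[
\phi_\mu(s^k)=\int\|\E_M(\lambda(s^k))\|_2^2\,d\mu(M)=0 .
\]
Hence $\phi_\mu|_{\langle s\rangle}=\delta_0$, whose spectral measure on the circle (via Bochner/Herglotz) is normalized Lebesgue measure. On the other hand, $\pi_j(s)$ has finite order in a finite quotient, so $k\mapsto\chi_j(s^k)$ is periodic. Its spectral measure is a probability measure supported on finitely many roots of unity, and in particular it is atomic. The spectral measure of $\phi_\mu|_{\langle s\rangle}$ is therefore $c_0\,\mathrm{Leb}+\sum_j c_j\sigma_j$ with each $\sigma_j$ an atomic probability measure. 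Fourier coefficients determine a measure, so this sum equals $\mathrm{Leb}$. Taking atomic parts gives $\sum_j c_j\sigma_j=0$, so all $c_j=0$, $c_0=1$, and $\phi_\mu=\delta_e$.

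\emph{Step 3: concluding $M=\bC1$.} For every $g\neq e$,
\[
\int\|\E_M(\lambda(g))\|_2^2\,d\mu(M)=0 ,
\]
so $\E_M(\lambda(g))=0$ for $\mu$-a.e.\ $M$. Since $\Gamma$ is countable, there is a single conull set of $M$ on which $\E_M(\lambda(g))=\tau(\lambda(g))1$ for all $g\in\Gamma$. By linearity and $\|\cdot\|_2$-continuity of $\E_M$, this gives $\E_M(x)=\tau(x)1$ for all $x\in L(\Gamma)$, hence $M=\E_M(L(\Gamma))=\bC1$.

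The main obstacle I expect is Step 1: justifying that $\phi_\mu$ admits a decomposition into extremal characters with the resulting measure purely atomic. The countability clause of Theorem~\ref{thm:char} is what makes this work. Once that is in place, the Lebesgue-versus-atomic comparison in Step 2 is routine.
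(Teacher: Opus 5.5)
Your proposal is correct and follows the paper's proof essentially step for step. You decompose the averaged character as an atomic combination of extremal characters using the Choquet simplex and the countability clause of Theorem~\ref{thm:char}. You then compare Herglotz measures on $\langle s\rangle$ (Haar measure versus atomic measures supported on roots of unity) to force $c_0=1$, and finally discard countably many null sets to get $\E_M(\lambda(g))=0$ for all $g\neq e$ almost surely, which gives $M=\bC1$ just as the paper's Fourier-expansion argument does.
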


\begin{proof}
For $g\in\Gamma$ set
$\varphi(g)=\int_{\SA(L(\Gamma))}\|\E_M(\lambda(g))\|_2^2\,d\mu(M)$.
Then $\varphi$ is a character of $\Gamma$ (see Subsection~\ref{subsec:char}, also see \cite[Lemma 2.2]{JL}).
By Theorem~\ref{thm:char} (since characters are traces on $C^*(\Gamma)$, they form a Choquet simplex, whose extreme points are countable) we may write $\varphi=c_0\delta_e+\sum_{i\geq1}c_i\chi_i$ with $c_i\geq0$, $\sum_{i\geq0}c_i=1$, where $\{\delta_e,\chi_i:i\geq1\}$ lists the countably many extremal characters of $\Gamma$; in particular each $\chi_i$ arises from an irreducible finite-dimensional representation $\pi_i$ of a finite quotient of $\Gamma$.

Set $H=\langle s\rangle\cong\Z$.  By the Herglotz--Bochner theorem, $\varphi|_H(s^m)=\int_\mathbb{T} z^m\,d\sigma(z)$ for a probability measure $\sigma$ on $\mathbb{T}$.  Since $\E_M(\lambda(s^k))=0$ for all $k\neq0$ and $\mu$-a.e.\ $M$, we get $\varphi(s^k)=0$ for all $k\neq0$, so all nonzero Fourier coefficients of $\sigma$ vanish and $\sigma=m_\mathbb{T}$, the Haar measure.  Likewise $\delta_e|_H$ has Herglotz measure $m_\mathbb{T}$, and writing $\chi_i(s^m)=\int_\mathbb{T} z^m\,d\sigma_i(z)$, uniqueness of Herglotz measures gives
$m_\mathbb{T}=c_0\,m_\mathbb{T}+\sum_{i\geq1}c_i\,\sigma_i \qquad\text{on }\mathbb{T}$.
Fix $i\geq1$.  The unitary $\pi_i(\bar s)$ has finite order, hence is diagonalizable with eigenvalues that are roots of unity, say $\omega_1,\dots,\omega_{d_i}$; then $\chi_i(s^m)=\frac1{d_i}\sum_j\omega_j^m$, so $\sigma_i=\frac1{d_i}\sum_j\delta_{\omega_j}$ is atomic, supported on roots of unity.  If $c_0<1$, then $(1-c_0)m_\mathbb{T}=\sum_ic_i\sigma_i$ would be a nonzero atomless measure equal to a countable sum of atomic measures, which is impossible.  Hence $c_0=1$ and $\varphi=\delta_e$, i.e.\ for every $g\neq e$, $\E_M(\lambda(g))=0$ for $\mu$-a.e.\ $M$; discarding countably many null sets, for $\mu$-a.e.\ $M$ we have $\E_M(\lambda(g))=0$ for all $g\neq e$.  For such $M$ and any $x\in M$, the Fourier expansion gives $x=\E_M(x)=\tau(x)1$, so $M=\bC1$.  Hence $\mu=\delta_\bC$.
\end{proof}
We would like to remark that the above proposition is a higher rank phenomenon. For example, if we consider $\Gamma=\mathbb F_2=\langle a,b\rangle$, and  $N=\ker(\Gamma\to\mathbb Z)$, where $a\mapsto1$ and $b\mapsto0$. Then $N$ is nontrivial and normal, but $a^k\notin N$ for every $k\neq0$. Consequently, $\mathbb E_{L(N)}(\lambda(a^k))=0$ for every $k\neq0$, while $\mu=\delta_{L(N)}$ is an IRA different from $\delta_{\mathbb C1}$.
\subsection{Acylindrically hyperbolic groups}
\label{subsec:acylindrically-hyperbolic}
We recall the terminology used in
\cite[Subsection~2.1]{amrutam-relative-isr} (which was taken from
\cite{osin2016acylindrically,DGO}).
An isometric action $\Gamma\curvearrowright(S,d)$ is
\emph{acylindrical} if, for every $\epsilon>0$, there are $R,N>0$
such that
$\bigl|\{g\in\Gamma:d(x,gx)\leq\epsilon, \ d(y,gy)\leq\epsilon\}\bigr|\leq N$ whenever $d(x,y)\geq R$.
For an action on a hyperbolic space, $s\in\Gamma$ is
\emph{loxodromic} if $n\mapsto s^nx$ is a quasi-isometric embedding
of $\mathbb Z$ into $S$ for some $x\in S$.
Such an element has two fixed points $x_s^+,x_s^-\in\partial S$.
Two loxodromic elements are \emph{independent} if their fixed-point
pairs are disjoint. The action is \emph{non-elementary} if it
contains two independent loxodromic elements. A group is
\emph{acylindrically hyperbolic} if it admits a non-elementary
acylindrical action on a hyperbolic space.

An element $s$ satisfies \emph{weak proper discontinuity} (WPD) if, for every
$x\in S$ and $\epsilon>0$, there is $m\geq1$ such that
$\{g\in\Gamma:d(x,gx)\leq\epsilon, \ d(s^mx,gs^mx)\leq\epsilon\}$ is finite.
Every loxodromic element of an acylindrical action satisfies WPD.
An element is \emph{generalized loxodromic} if it is loxodromic
for some acylindrical action of $\Gamma$ on a hyperbolic space.
Equivalently, it is loxodromic WPD for some action on a hyperbolic
space \cite[Theorem~1.4]{osin2016acylindrically}.
We denote the set of these elements by $K_{\mathrm{lox}}$.

For $s\in K_{\mathrm{lox}}$, there is a unique maximal virtually
cyclic subgroup $E_\Gamma(s)$ containing $s$, and
$E_\Gamma(s)= \{g\in\Gamma:gs^mg^{-1}=s^{\pm m} \text{ for some }m\geq1\}$.
It is the stabilizer of $\{x_s^+,x_s^-\}$ in any action for which
$s$ is loxodromic WPD, and it is hyperbolically embedded in $\Gamma$;
see \cite[Lemma~6.5, Corollaries~2.9 and~6.6]{DGO}.
In particular, $C_\Gamma(s^m)\subseteq E_\Gamma(s)$ for every
$m\neq0$. 

For a fixed non-elementary acylindrical action
$\Gamma\curvearrowright S$, the intersection $\cap_{s\in K_{\text{lox}}}E_\Gamma(s)$ is the maximal
finite normal subgroup of $\Gamma$ (see~\cite[Lemma~5.6]{AMS}).
Thus this intersection is trivial whenever $\Gamma$ has trivial
amenable radical.

\section{Boomerang algebras, properties and associated characters}
\label{sec:prep}
We begin with basic properties of boomerang subalgebras and the positive-definite functions associated with them. We then study finite dimensional and amenable boomerang subalgebras and give conditions under which they are trivial.
\subsection{Boomerang subalgebras and their basic properties}
We now formally define the notion of a boomerang subalgebra.
\begin{definition}[Boomerang subalgebra]\label{def:boomerang-algebra}
A von Neumann subalgebra $\cA\leq L(\Gamma)$ is called boomerang if for every
$s\in\Gamma$ there is a strictly increasing sequence $(n_i)\subseteq\bN$ such that
$\lambda(s^{n_i})\cA\lambda(s^{n_i})^*\to\cA$ in the Effros--Mar\'echal topology.
\end{definition}
If $s$ has finite order $m$, taking $n_i=im$ gives
$\lambda(s^{n_i})\cA\lambda(s^{-n_i})=\cA$ for every $i$.
Thus the condition in Definition~\ref{def:boomerang-algebra} is
automatic for finite-order elements. Also, note that every invariant subalgebra is boomerang.

We first relate boomerang subgroups to boomerang subalgebras through the maps $L$ and $J$ as defined in the introduction. 
\begin{proposition}\label{prop:maps}
The maps
\[
\begin{tabular}{lllllll}
$\Sub(\Gamma)$ & $\stackrel{L}{\longrightarrow}$ & $\SA(L(\Gamma))$ & $\qquad\qquad$ & $\SA(L(\Gamma))$ & $\stackrel{J}\longrightarrow$ & $\Sub(\Gamma)$ \\
$\Delta$ & $\mapsto$ & $L(\Delta)$ & $\qquad\qquad$ & $M$ & $\mapsto$ &
$\Gamma_M:=\{\gamma:\lambda(\gamma)\in M\}$
\end{tabular}
\]
are measurable and $\Gamma$-equivariant, and satisfy $JL(\Delta)=\Delta$ and
$LJ(M)\leq M$. Moreover, $L$ is a topological embedding.
\end{proposition}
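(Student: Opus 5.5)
We handle the algebraic identities, equivariance, the embedding property of $L$, and measurability, in that order, always through the characterization of Effros--Mar\'echal convergence by $\|\E_{\cA_i}(x)-\E_\cA(x)\|_2\to0$ for all $x\in L(\Gamma)$.

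\emph{Identities and equivariance.} Since $\lambda(\gamma)\in L(\Delta)$ if and only if $\gamma\in\Delta$ (compare Fourier coefficients: $\E_{L(\Delta)}(\lambda(\gamma))=1_\Delta(\gamma)\lambda(\gamma)$), we get $JL(\Delta)=\Delta$. Next, $L(\Gamma_M)$ is generated by unitaries lying in $M$, so $LJ(M)\leq M$. Equivariance is immediate: $\lambda(s)L(\Delta)\lambda(s)^*=L(s\Delta s^{-1})$, and $\lambda(\gamma)\in\lambda(s)M\lambda(s)^*$ if and only if $\lambda(s^{-1}\gamma s)\in M$, so $\Gamma_{\lambda(s)M\lambda(s)^*}=s\Gamma_Ms^{-1}$.

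\emph{$L$ is a topological embedding.} First we show $L$ is continuous. Suppose $\Delta_i\to\Delta$ in the Chabauty topology, i.e.\ $1_{\Delta_i}\to1_\Delta$ pointwise. For $x=\sum_g x_g\lambda(g)$ we have $\E_{L(\Delta_i)}(x)=\sum_g 1_{\Delta_i}(g)x_g\lambda(g)$, and hence
\[
\|\E_{L(\Delta_i)}(x)-\E_{L(\Delta)}(x)\|_2^2=\sum_g|1_{\Delta_i}(g)-1_\Delta(g)|^2|x_g|^2\to0
\]
by dominated convergence, since $\sum_g|x_g|^2<\infty$. Injectivity follows from $JL=\mathrm{id}$. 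For continuity of the inverse on the image, suppose $L(\Delta_i)\to L(\Delta)$. Then for each fixed $g$,
\[
\|\E_{L(\Delta_i)}(\lambda(g))-\E_{L(\Delta)}(\lambda(g))\|_2=|1_{\Delta_i}(g)-1_\Delta(g)|\to0,
\]
so $1_{\Delta_i}\to1_\Delta$ pointwise, which is exactly Chabauty convergence. Because $\Sub(\Gamma)$ is compact and $\SA(L(\Gamma))$ is Hausdorff, we could alternatively conclude directly that the continuous injection $L$ is a homeomorphism onto its (closed) image.

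\emph{Measurability of $J$.} The key observation is
\[
\gamma\in\Gamma_M\iff\|\E_M(\lambda(\gamma))\|_2=1,
\]
because $\E_M$ is the orthogonal projection onto $L^2(M)$ and $\lambda(\gamma)$ is a unit vector; note that $L^2(M)\cap L(\Gamma)=M$, so a vector fixed by the projection lies in $M$. The map $M\mapsto\|\E_M(\lambda(\gamma))\|_2$ is continuous by the convergence characterization. Hence $\{M:\gamma\in\Gamma_M\}$ is closed, and in fact $M\mapsto 1_{\Gamma_M}(\gamma)$ is upper semicontinuous. The Chabauty Borel structure is generated by the sets $\{\Delta:\gamma\in\Delta\}$, so $J$ is Borel. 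This is the only point needing care: $J$ is generally not continuous, since a limit of subalgebras can acquire group elements, so measurability is the best available conclusion.
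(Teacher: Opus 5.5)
Your proof is correct and follows the same route as the paper: the identities and equivariance come from the definitions, and measurability of $J$ comes from the identity $\{M:\lambda(\gamma)\in M\}=\{M:\|\E_M(\lambda(\gamma))\|_2=1\}$, which you additionally show is a closed set. The only difference is that you verify continuity of $L$ and of its inverse on the image directly, using Fourier coefficients and the $\|\cdot\|_2$-characterization of Effros--Mar\'echal convergence, whereas the paper cites \cite[Proposition~4.1]{amrutam2309amenable} for continuity and leaves the embedding claim implicit.
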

\begin{proof}
$J$ is Borel, since $\{M:\lambda(\gamma)\in M\}
=\{M:\|\E_M(\lambda(\gamma))\|_2=1\}$ and $M\mapsto \E_M(x)$ is Borel
for the Effros--Mar\'echal structure \cite{haagerup1998effros}.
It is $\Gamma$-equivariant since
$\Gamma_{\lambda(\gamma) M\lambda(\gamma)^*}=\gamma\Gamma_M\gamma^{-1}$. The map $L$ is even continuous (see \cite[Proposition~4.1]{amrutam2309amenable}). Equivariance was also established in the same proposition. The identities
$JL(\Delta)=\Delta$ and $LJ(M)\leq M$ follow from the definitions.
\end{proof}

The topological embedding in Proposition~\ref{prop:maps} identifies
boomerang subgroups with boomerang group von Neumann algebras.
\begin{proposition}\label{prop:boomerang-group-algebra}
For a subgroup $H\leq\Gamma$, the algebra $L(H)$ is boomerang if and
only if $H$ is a boomerang subgroup.
\end{proposition}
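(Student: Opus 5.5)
The plan is to derive this directly from Proposition~\ref{prop:maps}, which says that $L\colon\Sub(\Gamma)\to\SA(L(\Gamma))$ is a $\Gamma$-equivariant topological embedding. Equivariance gives, for every $s\in\Gamma$ and $n\in\bN$,
\[
\lambda(s^{n})L(H)\lambda(s^{n})^*=L(s^nHs^{-n}).
\]
This holds because conjugation by $\lambda(s^n)$ carries the generating unitaries $\lambda(h)$, $h\in H$, onto $\lambda(s^nhs^{-n})$, and it is a normal automorphism, so it maps the generated von Neumann algebra onto the generated one.

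For the forward implication, assume $H$ is a boomerang subgroup. Fix $s\in\Gamma$ and choose a strictly increasing sequence $(n_i)$ with $s^{n_i}Hs^{-n_i}\to H$ in the Chabauty topology. By continuity of $L$ we get $L(s^{n_i}Hs^{-n_i})\to L(H)$. By the displayed identity this says exactly $\lambda(s^{n_i})L(H)\lambda(s^{n_i})^*\to L(H)$ in the Effros--Mar\'echal topology. The same sequence $(n_i)$ therefore witnesses Definition~\ref{def:boomerang-algebra} for $s$.

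For the reverse implication, assume $L(H)$ is boomerang, and take $(n_i)$ with $\lambda(s^{n_i})L(H)\lambda(s^{n_i})^*\to L(H)$. By the displayed identity this reads $L(s^{n_i}Hs^{-n_i})\to L(H)$. Since $L$ is a homeomorphism onto its image, the inverse $L^{-1}$ is continuous on $L(\Sub(\Gamma))$, and all the terms and the limit lie in that image. Hence $s^{n_i}Hs^{-n_i}\to H$ in the Chabauty topology, so $H$ is boomerang.

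The only substantive input is the embedding property, specifically continuity of the inverse on the image. That is already contained in Proposition~\ref{prop:maps}, so there is no real obstacle here. If one wanted to avoid relying on it, the direct route is as follows. Chabauty convergence $H_i\to H$ amounts to $\mathbf 1_{H_i}(g)\to\mathbf 1_H(g)$ for every $g\in\Gamma$. Effros--Mar\'echal convergence gives $\|\E_{L(H_i)}(\lambda(g))\|_2\to\|\E_{L(H)}(\lambda(g))\|_2$, and $\|\E_{L(K)}(\lambda(g))\|_2=\mathbf 1_K(g)$ for any subgroup $K$, which recovers pointwise convergence of the indicators.
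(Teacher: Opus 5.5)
Your proposal is correct and is the paper's proof: both apply the equivariant topological embedding $L$ of Proposition~\ref{prop:maps} to the conjugates $s^{n}Hs^{-n}$, using continuity of $L$ for one direction and continuity of $L^{-1}$ on $L(\Sub(\Gamma))$ for the other. Your closing remark that $\|\E_{L(K)}(\lambda(g))\|_2=\mathbf 1_K(g)$ recovers Chabauty convergence is a worthwhile supplement, since the paper's proof of Proposition~\ref{prop:maps} only justifies continuity of $L$ and not continuity of its inverse (which also follows from injectivity, compactness of $\Sub(\Gamma)$, and Hausdorffness of $\SA(L(\Gamma))$).
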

\begin{proof}
Apply the equivariant topological embedding of Proposition~\ref{prop:maps}
to the conjugates of $H$ by powers of each $s\in\Gamma$.
\end{proof}

The next example contrasts with the subgroup case. Every finite-index
subgroup is boomerang, since it has only finitely many conjugates.
However, a finite-index subalgebra need not be boomerang.
\begin{example}\label{prop:finite-index-not-boomerang}\label{prop:fi}\normalfont
There exist finite-index subalgebras which are not boomerang.
\end{example}
\begin{proof}
Let $\mathbb F_2=\langle a,b\rangle$, and let $\sigma$ be the
automorphism interchanging $a$ and $b$. We also write $\sigma$ for
the induced automorphism of $M=L(\mathbb F_2)$. Set $N=M^\sigma$.
The automorphism $\sigma$ is outer. Indeed, if
$\sigma=\operatorname{Ad}(v)$ for a unitary $v\in M$, then
$\lambda(b^{-1})v\lambda(a)=v$. The Fourier coefficients of $v$
would therefore be constant on every set
$\{b^{-m}ha^m:m\in\mathbb Z\}$. For fixed $h$, these elements
are pairwise distinct: their images in the abelianization
$\mathbb Z^2$ differ by $(m,-m)$. Square summability would force
all the Fourier coefficients of $v$ to be zero, a contradiction.
Thus the fixed-point inclusion for the outer order-two action
$\langle\sigma\rangle$ has $[M:N]=2$.

We now show that $N$ is not boomerang. Suppose otherwise. Applying
the definition to $b$, choose a strictly increasing sequence
$(n_i)\subseteq\mathbb N$ such that
$N_i=\lambda(b^{n_i})N\lambda(b^{-n_i})\to N$ in the
Effros--Mar\'echal topology. Since
$\mathbb E_N=(\operatorname{id}+\sigma)/2$, Lemma~\ref{lem:conjugate-expectation} gives
$\mathbb E_{N_i}(\lambda(a)) =\lambda(b^{n_i})\mathbb E_N (\lambda(b^{-n_i}ab^{n_i}))\lambda(b^{-n_i}) =\tfrac12\bigl(\lambda(a) +\lambda(b^{n_i}a^{-n_i}ba^{n_i}b^{-n_i})\bigr)$.
For every $n_i\geq1$, the reduced word
$b^{n_i}a^{-n_i}ba^{n_i}b^{-n_i}$ is different from $b$.
Since distinct group unitaries are orthogonal in $L^2(M,\tau_0)$,
$\|\mathbb E_{N_i}(\lambda(a))-\mathbb E_N(\lambda(a))\|_2^2 =\tfrac14\|\lambda(b^{n_i}a^{-n_i}ba^{n_i}b^{-n_i}) -\lambda(b)\|_2^2 =\tfrac12$.
This contradicts the pointwise $\|\cdot\|_2$ convergence of the
conditional expectations, which follows from $N_i\to N$.
\end{proof}

We next study the Fourier support of $\mathbb E_M(\lambda(s))$ when $M$ is boomerang. Towards this end, we introduce a combinatorial set which locates the Fourier
support of the image of a group element under the conditional
expectation onto a boomerang subalgebra.
For a countable group $\Gamma$ and $\gamma\in \Gamma$, set
$R(\gamma):=\bigcap_{t\in C_{\Gamma}(\gamma)}\ \bigcup_{k\geq1}C_{\Gamma}(t^k)$.
The following proposition shows that the support of $\mathbb E_M(\lambda(s))$ is contained in $R(s)$.
\begin{proposition}\label{prop:support}
Let $\Gamma$ be any countable discrete group and $M\subseteq L(\Gamma)$ a boomerang von Neumann subalgebra.  Then $\text{supp}(\mathbb E_M(\lambda(s)))\subseteq R(s)$ for every $s\neq e$.
\end{proposition}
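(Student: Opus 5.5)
Fix $s\neq e$ and write $x=\E_M(\lambda(s))=\sum_g x_g\lambda(g)$ with $\sum_g|x_g|^2<\infty$. We must show that if $x_g\neq0$ then for every $t\in C_\Gamma(s)$ there is $k\geq1$ with $t^k$ commuting with $g$. So fix $t\in C_\Gamma(s)$ and $g$ with $g\notin\bigcup_{k\geq1}C_\Gamma(t^k)$; the goal is $x_g=0$. The plan is to use the boomerang property for the element $t$. Choose a strictly increasing sequence $(n_i)$ with $M_i:=\lambda(t^{n_i})M\lambda(t^{n_i})^*\to M$ in the Effros--Mar\'echal topology. By the tracial characterization of convergence, $\|\E_{M_i}(\lambda(s))-\E_M(\lambda(s))\|_2\to0$.

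Next compute $\E_{M_i}(\lambda(s))$ with Lemma~\ref{lem:conjugate-expectation}. Since $t$ commutes with $s$, we have $\lambda(t^{n_i})^*\lambda(s)\lambda(t^{n_i})=\lambda(s)$, hence
\[
\E_{M_i}(\lambda(s))=\lambda(t^{n_i})\,x\,\lambda(t^{n_i})^*=\sum_h x_h\lambda(t^{n_i}ht^{-n_i}).
\]
So the Fourier coefficient of $\E_{M_i}(\lambda(s))$ at $t^{n_i}gt^{-n_i}$ equals $x_g$, and convergence in $\|\cdot\|_2$ gives, coefficientwise, $x_{t^{-n_i}ht^{n_i}}\to x_h$ for every $h$. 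The key observation is that for our fixed $g$, the elements $g_i:=t^{n_i}gt^{-n_i}$ are pairwise distinct. Indeed, $g_i=g_j$ with $i<j$ would mean that $t^{n_j-n_i}$ commutes with $g$, contradicting $g\notin C_\Gamma(t^k)$ for all $k\geq1$.

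Now I extract a contradiction from square-summability. The operator $\lambda(t^{n_i})x\lambda(t^{n_i})^*$ has coefficient $x_g$ at $g_i$, while $x$ has coefficient $x_{g_i}$ there. Therefore
\[
\|\E_{M_i}(\lambda(s))-x\|_2\geq|x_g-x_{g_i}|.
\]
Since the $g_i$ are distinct and $\sum_h|x_h|^2<\infty$, we get $x_{g_i}\to0$. Together with $\|\E_{M_i}(\lambda(s))-x\|_2\to0$, this forces $|x_g|\leq\lim_i\bigl(|x_g-x_{g_i}|+|x_{g_i}|\bigr)=0$. Hence $x_g=0$, so $\supp(x)\subseteq\bigcup_{k\geq1}C_\Gamma(t^k)$ for each $t\in C_\Gamma(s)$. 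Intersecting over $t$ gives $\supp(x)\subseteq R(s)$.

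The only delicate point is the distinctness of the $g_i$, which uses only that $n_i$ is strictly increasing, so that $n_j-n_i\geq1$. If $t$ has finite order, the claim is trivial: some power $t^k$ with $k\geq1$ is $e$, which centralizes everything, and $\bigcup_k C_\Gamma(t^k)=\Gamma$. So no real obstacle is expected; the main step is recognizing that conjugation by $t\in C_\Gamma(s)$ fixes $\lambda(s)$, so that Lemma~\ref{lem:conjugate-expectation} reduces the recurrence of $M$ to the recurrence of the single vector $x$ under conjugation by $\lambda(t^{n_i})$.
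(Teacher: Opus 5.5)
Your proof is correct and is essentially the paper's argument: apply the boomerang property along powers of $t\in C_\Gamma(s)$, use Lemma~\ref{lem:conjugate-expectation} to reduce to conjugating the single vector $\E_M(\lambda(s))$, and use square-summability along the pairwise distinct conjugates $t^{n_i}gt^{-n_i}$ to force $x_g=0$. The only cosmetic difference is that the paper applies the boomerang definition to $t^{-1}$ rather than to $t$.
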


\begin{proof}
Write $\mathbb{E}_M(\lambda(s))=\sum_{h\in \Gamma}c_h\lambda(h)$ for its Fourier expansion.  Fix $g\notin R(s)$; our goal is to show $c_g=0$.  We may assume $g\neq e$, since $c_e=\tau(\mathbb{E}_M(\lambda(s)))=\tau(\lambda(s))=0$.  By the definition of $R(s)$ there is $t\in C_\Gamma(s)$ with $g\notin\bigcup_{k\geq1}C_\Gamma(t^k)$, i.e.\ $t^kgt^{-k}\neq g$ for all $k\geq1$; note $t\neq e$, since $g\in \Gamma=\bigcup_kC_\Gamma(e^k)$.

Since $M$ is boomerang, there is $n_i\to+\infty$ with $\lambda(t^{-n_i})M\lambda(t^{n_i})\to M$ in the Effros--Mar\'echal topology; in the tracial setting this implies $\|\mathbb{E}_{\lambda(t^{-n_i})M\lambda(t^{n_i})}(x)-\mathbb{E}_M(x)\|_2\to0$ for every $x$ (one of the standard reformulations of Effros--Mar\'echal convergence, see \cite{haagerup1998effros}).  Using $\mathbb{E}_{uMu^*}(x)=u\mathbb{E}_M(u^*xu)u^*$ and $t\in C_\Gamma(s)$,
\begin{align*}\|\mathbb{E}_{\lambda(t^{-n_i})M\lambda(t^{n_i})}(\lambda(s))-\mathbb{E}_M(\lambda(s))\|_2^2&=\|\E_M(\lambda(s))-\lambda(t^{n_i})\E_M(\lambda(s))\lambda(t^{-n_i})\|_2^2
        \\&=\sum_{h\in \Gamma}|c_{t^{-n_i}ht^{n_i}}-c_h|^2\longrightarrow0.
\end{align*}
In particular $|c_{t^{n_i}gt^{-n_i}}-c_g|\to0$.  The elements $t^{n_i}gt^{-n_i}$ are pairwise distinct (if $t^agt^{-a}=t^bgt^{-b}$ with $a<b$, then $t^{b-a}gt^{-(b-a)}=g$), so $t^{n_i}gt^{-n_i}\to\infty$ in $\Gamma$ and $c_{t^{n_i}gt^{-n_i}}\to0$, since $(c_h)_h\in\ell^2(\Gamma)$.  Hence $c_g=0$.
\end{proof}
Since $s\in C_\Gamma(s)$, taking $t=s$ in the intersection defining $R(s)$ and applying Proposition~\ref{prop:support} gives
$\operatorname{supp}\bigl(\mathbb E_M(\lambda(s))\bigr) \subseteq R(s) \subseteq\bigcup_{k\geq1}C_\Gamma(s^k)$. This observation will come in handy later. 

We shall also use the following consequence of the boomerang
property to preserve nonzero expectations under conjugation.
\begin{lemma}\label{notzerounderconditionalexpec}
Let $M\leq L(\Gamma)$ be boomerang and suppose that $\E_M(\lambda(t))\neq0$.
For every $s\in\Gamma$, there are infinitely many $n\geq1$ such that
$\E_M(\lambda(s^{-n}ts^n))\neq0$.
\end{lemma}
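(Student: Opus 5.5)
The idea is to use the boomerang property for the element $s$, together with Lemma~\ref{lem:conjugate-expectation} and the $\|\cdot\|_2$-characterization of Effros--Mar\'echal convergence. If $s$ has finite order $m$, the claim is immediate. Indeed, for $n=jm$ we get $s^{-n}ts^n=t$, so $\E_M(\lambda(s^{-n}ts^n))=\E_M(\lambda(t))\neq0$ for infinitely many $n$. So assume $s$ has infinite order; the argument below in fact covers all $s$.

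Since $M$ is boomerang, choose a strictly increasing sequence $(n_i)\subseteq\bN$ with $M_i:=\lambda(s^{n_i})M\lambda(s^{n_i})^*\to M$. By the tracial reformulation recalled in Section~\ref{sec:prelim}, this gives $\|\E_{M_i}(x)-\E_M(x)\|_2\to0$ for every $x\in L(\Gamma)$. Take $x=\lambda(t)$. Lemma~\ref{lem:conjugate-expectation} gives
\[
\E_{M_i}(\lambda(t))=\lambda(s^{n_i})\,\E_M\bigl(\lambda(s^{-n_i}ts^{n_i})\bigr)\,\lambda(s^{n_i})^*.
\]
Conjugation by a unitary preserves $\|\cdot\|_2$, so
\[
\bigl\|\E_M(\lambda(s^{-n_i}ts^{n_i}))\bigr\|_2=\|\E_{M_i}(\lambda(t))\|_2\longrightarrow\|\E_M(\lambda(t))\|_2>0 .
\]
Hence $\E_M(\lambda(s^{-n_i}ts^{n_i}))\neq0$ for all sufficiently large $i$. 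Since the $n_i$ are strictly increasing, this yields infinitely many $n\geq1$ with the desired property.

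There is no real obstacle here. The only points to check are the direction of conjugation and the sign convention. We need the conjugates by $\lambda(s^{n_i})$, not by $\lambda(s^{-n_i})$, to match $s^{-n}ts^n$ with $n\geq1$. Definition~\ref{def:boomerang-algebra} supplies exactly this direction, so no inversion of the recurrence is needed.
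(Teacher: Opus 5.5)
Your proposal is correct and follows essentially the same route as the paper: use recurrence along powers of $s$, apply Lemma~\ref{lem:conjugate-expectation} to identify $\E_{M_i}(\lambda(t))$ with a unitary conjugate of $\E_M(\lambda(s^{-n_i}ts^{n_i}))$, and use $\|\cdot\|_2$-convergence to keep these nonzero for large $i$. The only cosmetic difference is that the paper fixes the explicit threshold $\tfrac12\|\E_M(\lambda(t))\|_2$ where you pass to the limit of the norms; your separate finite-order case is harmless but unnecessary.
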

\begin{proof}
Since $M$ is boomerang, choose a strictly increasing sequence
$(n_i)\subseteq\mathbb N$ such that
$M_i=\lambda(s^{n_i})M\lambda(s^{-n_i})\to M$ in the
Effros--Mar\'echal topology. By
Lemma~\ref{lem:conjugate-expectation},
$\E_{M_i}(\lambda(t)) =\lambda(s^{n_i})\E_M(\lambda(s^{-n_i}ts^{n_i}))\lambda(s^{-n_i}) \longrightarrow \E_M(\lambda(t)) \quad\text{in }\|\cdot\|_2$.
Choose $i_0$ such that
$\|\E_{M_i}(\lambda(t))-\E_M(\lambda(t))\|_2
<\frac12\|\E_M(\lambda(t))\|_2$ for every $i\geq i_0$.
Then $\|\E_{M_i}(\lambda(t))\|_2
\geq\frac12\|\E_M(\lambda(t))\|_2>0$, so
$\E_M(\lambda(s^{-n_i}ts^{n_i}))\neq0$ for every $i\geq i_0$.
\end{proof}

\subsection{Boomerang characters}
We now consider the corresponding recurrence property for positive-definite functions.
We show that the function associated with a boomerang subalgebra is boomerang, and that the converse fails.
\begin{definition}\label{def:boomerang-function}
A normalized positive-definite function $\phi$ on $\Gamma$ is called boomerang
if for every $s\in\Gamma$ there is a strictly increasing sequence
$(n_i)\subseteq\bN$ such that $\phi(s^{n_i}gs^{-n_i})\to\phi(g)$ for every
$g\in\Gamma$. We will refer to these functions as Boomerang characters.
\end{definition}
The assignment $\cA\mapsto\phi_{\cA}$ preserves the boomerang property.
\begin{lemma}\label{lem:boom_char}
If $\cA$ is a boomerang subalgebra, then $\phi_{\cA}$ is a boomerang
positive-definite function.
\end{lemma}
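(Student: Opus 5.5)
The plan is to translate Effros--Mar\'echal recurrence of $\cA$ directly into pointwise recurrence of $\phi_\cA$, using Lemma~\ref{lem:conjugate-expectation} to move the conjugation from the algebra onto the group element. Normalization and positive-definiteness of $\phi_\cA$ are already recorded in Subsection~\ref{subsec:char}, so only the recurrence condition of Definition~\ref{def:boomerang-function} needs proof.

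Fix $s\in\Gamma$. By Definition~\ref{def:boomerang-algebra}, choose a strictly increasing sequence $(n_i)$ with $\cA_i:=\lambda(s^{-n_i})\cA\lambda(s^{n_i})\to\cA$. The definition is stated with positive powers of $s$, so to obtain conjugation by $s^{-n_i}$ we apply it to the element $s^{-1}$ instead. In the tracial setting, this convergence means $\|\E_{\cA_i}(x)-\E_\cA(x)\|_2\to0$ for every $x\in L(\Gamma)$.

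Next, fix $g\in\Gamma$ and take $x=\lambda(g)$. By Lemma~\ref{lem:conjugate-expectation},
\[
\E_{\cA_i}(\lambda(g))=\lambda(s^{-n_i})\,\E_\cA\bigl(\lambda(s^{n_i}gs^{-n_i})\bigr)\,\lambda(s^{n_i}).
\]
Conjugation by a unitary preserves $\|\cdot\|_2$, so
\[
\|\E_{\cA_i}(\lambda(g))\|_2^2=\|\E_\cA(\lambda(s^{n_i}gs^{-n_i}))\|_2^2=\phi_\cA(s^{n_i}gs^{-n_i}).
\]
Since $\bigl|\|a\|_2-\|b\|_2\bigr|\le\|a-b\|_2$, the convergence $\E_{\cA_i}(\lambda(g))\to\E_\cA(\lambda(g))$ in $\|\cdot\|_2$ gives $\phi_\cA(s^{n_i}gs^{-n_i})\to\phi_\cA(g)$. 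The sequence $(n_i)$ does not depend on $g$, so this holds for every $g$ simultaneously, which is exactly the required condition.

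There is no real obstacle here. The only points to watch are the sign of the exponent, handled by applying the boomerang property to $s^{-1}$, and the fact that Effros--Mar\'echal convergence yields $\|\cdot\|_2$-convergence of conditional expectations, which is recorded in Section~\ref{sec:prelim}.
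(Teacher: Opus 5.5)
Your proposal is correct and follows the paper's proof: you fix $s$, take a recurrence sequence with $\cA_i=\lambda(s^{-n_i})\cA\lambda(s^{n_i})\to\cA$, and combine Lemma~\ref{lem:conjugate-expectation} with unitary invariance of $\|\cdot\|_2$ to get $\phi_\cA(s^{n_i}gs^{-n_i})=\|\E_{\cA_i}(\lambda(g))\|_2^2\to\phi_\cA(g)$ for every $g$. Your remark that the negative exponent comes from applying the boomerang definition to $s^{-1}$ makes explicit a step the paper leaves implicit.
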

\begin{proof}
Fix $s\in\Gamma$. Since $\cA$ is boomerang, choose a strictly
increasing sequence $(n_i)\subseteq\mathbb N$ such that
$\cA_i=\lambda(s^{-n_i})\cA\lambda(s^{n_i})\to\cA$
in the Effros--Mar\'echal topology.
By Lemma~\ref{lem:conjugate-expectation}, for every $g\in\Gamma$,
$\phi_{\cA}(s^{n_i}gs^{-n_i}) =\|\E_{\cA_i}(\lambda(g))\|_2^2 \longrightarrow\|\E_{\cA}(\lambda(g))\|_2^2=\phi_{\cA}(g)$.
\end{proof}
The converse to Lemma~\ref{lem:boom_char} fails even for torsion-free hyperbolic groups.
\begin{example}\label{rem:boomerang-character-converse}\normalfont
Let $\Gamma=\mathbb F_2=\langle a,b\rangle$, and let
$\alpha\in\operatorname{Aut}(\Gamma)$ be determined by
$\alpha(a)=a^{-1}$ and $\alpha(b)=b^{-1}$. We also denote by $\alpha$
the induced trace-preserving automorphism of $L(\Gamma)$. Set
$\mathcal M=L(\Gamma)^\alpha$. Since $\alpha^2=\operatorname{id}$,
$\mathbb E_{\mathcal M}=(\operatorname{id}+\alpha)/2$.

For every $g\neq e$, we have $\alpha(g)\neq g$. Hence, for $g\ne e$, we have
$$\phi_{\mathcal M}(g) =\tfrac12\tau_0\bigl((\lambda(g)+\lambda(\alpha(g)))\lambda(g)^*\bigr) =\tfrac12.$$
Moreover, $\phi_{\mathcal M}(e)=1$. Thus
$\phi_{\mathcal M}=(1+\delta_e)/2$ is conjugation invariant,
and in particular is boomerang.

But the same argument used in Example~\ref{prop:fi} shows that $\mathcal M$ is not boomerang.
In particular, $\mathcal M$ is not invariant.
Thus conjugation invariance of $\phi_{\mathcal M}$ does not imply that $\mathcal M$ is boomerang or invariant.
\end{example}

\subsection{Amenable boomerang subalgebras}
\label{sec:boomerang-amenability}
In \cite[Theorem~B]{amrutam2309amenable}, we showed that every amenable IRA of $\Gamma$ is supported on subalgebras of $L(\operatorname{Rad}(\Gamma))$.
In particular, when the amenable radical is trivial, every amenable IRA is trivial.
Here we study individual amenable boomerang subalgebras and give conditions under which they are trivial.

Before that, let us fix some notation. For a compact $\Gamma$-space $X$, fix $x_0\in X$ and represent $C(X)$ as multiplication operators on $\ell^2(\Gamma)$ by $f\delta_g=f(gx_0)\delta_g$.
When $X$ is minimal, this representation is faithful, and we identify $C(X)$ with its image in $\mathbb B(\ell^2(\Gamma))$; see \cite[Section~3]{amrutam2309amenable}.
We use this convention throughout, with $\lambda(g)f\lambda(g)^*=g\cdot f$, where $(g\cdot f)(x)=f(g^{-1}x)$.
\subsubsection{Finite dimensional boomerang algebras}
We begin by exploring finite dimensional boomerang subalgebras, which are natural examples of amenable subalgebras.

To deal with finite-dimensional invariant subalgebras inside $L(\Gamma)$ for i.c.c. groups $\Gamma$, Chifan-Das-Sun~\cite{CDS2023} used that every finite subset $F\subset\Gamma\setminus\{e\}$ admits $t\in\Gamma$ such that $tFt^{-1}\cap F=\emptyset$.
For boomerang subalgebras, recurrence is available along powers of a fixed element.
We therefore need separation for all sufficiently large powers of one element, so that it holds along any recurrence sequence.
This motivates the definition below.
\begin{definition}
Let $\Gamma$ be a countable discrete group. We say that $\Gamma$ has
property $ESD$ if, given finitely many infinite index subgroups
$\Lambda_i<\Gamma$ and finite subsets $F_i\subset\Gamma$, there are
$s\in\Gamma$ and $N\geq1$ such that
$s^n\notin\bigcup_i\bigcup_{t\in F_i}t\Lambda_i$ for all $n\geq N$.
We say that $\Gamma$ has property $ECI$ if, for every finite subset
$F\subset\Gamma\setminus\{e\}$, there are $s\in\Gamma$ and $N\geq1$
such that $s^nFs^{-n}\cap F=\emptyset$ for all $n\geq N$.
\end{definition}
See also \cite[Definition~2.10]{amrutam2026crossed} for a formulation of property $ECI$
requiring escape under every fixed nontrivial conjugating element,
for finite sets outside its centralizer. 

It turns out we only require powers of one element to escape finite unions of cosets of arbitrary infinite index subgroups and gives a sufficient condition for $ECI$.
\begin{proposition}\label{prop:esd-implies-eci}
Let $\Gamma$ be an icc group with property $ESD$. Then $\Gamma$ has
property $ECI$.
\end{proposition}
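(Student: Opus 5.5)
The plan is to reduce $ECI$ to an application of $ESD$ with well-chosen subgroups and cosets. Fix a finite set $F\subset\Gamma\setminus\{e\}$. For $n\geq1$, the condition $s^nFs^{-n}\cap F\neq\emptyset$ means that $s^nfs^{-n}=f'$ for some pair $f,f'\in F$. So it suffices to find $s$ and $N$ such that, for every pair $(f,f')\in F\times F$ and every $n\geq N$, we have $s^n f s^{-n}\neq f'$.

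Fix a pair $(f,f')$ and suppose the equation $x f x^{-1}=f'$ has a solution $x=t_{f,f'}$. The full solution set is then the coset $t_{f,f'}C_\Gamma(f)$. Indeed, $xfx^{-1}=f'=t f t^{-1}$ holds if and only if $t^{-1}x\in C_\Gamma(f)$. If no solution exists, the pair imposes no constraint and we discard it. Thus
\[
s^nFs^{-n}\cap F\neq\emptyset \iff s^n\in\bigcup_{(f,f')}t_{f,f'}C_\Gamma(f).
\]

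Next we check that each subgroup occurring here has infinite index. Since $\Gamma$ is icc and $f\neq e$, the conjugacy class of $f$ is infinite. That conjugacy class is in bijection with $\Gamma/C_\Gamma(f)$, so $C_\Gamma(f)$ has infinite index in $\Gamma$.

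Now apply $ESD$ with the finitely many infinite-index subgroups $\Lambda_f=C_\Gamma(f)$ for $f\in F$. For each $f$, take the finite set $F_f=\{t_{f,f'}: f'\in F \text{ such that a solution exists}\}$. If some $F_f$ is empty, drop that subgroup, or insert an arbitrary element, which only strengthens the hypothesis. $ESD$ then gives $s\in\Gamma$ and $N\geq1$ with $s^n\notin\bigcup_f\bigcup_{t\in F_f}tC_\Gamma(f)$ for all $n\geq N$. By the equivalence displayed above, this says exactly that $s^nFs^{-n}\cap F=\emptyset$ for all $n\geq N$, which is property $ECI$.

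The argument has no genuine obstacle. The only point needing care is the coset description of the solution set of $xfx^{-1}=f'$, together with the observation that icc makes every centralizer of a nontrivial element infinite index. If $F$ is empty the statement is vacuous.
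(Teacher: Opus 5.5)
Your proof is correct and is essentially the paper's argument: the set of elements conjugating some member of $F$ into $F$ is the finite union of left cosets $t_{f,f'}C_\Gamma(f)$ of centralizers, which have infinite index because $\Gamma$ is icc, and applying $ESD$ to these cosets gives $ECI$. One minor point: no $F_f$ is ever empty, since the pair $(f,f)$ always admits the conjugator $t_{f,f}=e$.
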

\begin{proof}
Fix a finite subset $F\subset\Gamma\setminus\{e\}$. For each conjugate
pair $g,h\in F$, choose $t_{g,h}\in\Gamma$ with
$t_{g,h}gt_{g,h}^{-1}=h$. Then
$$\{t\in\Gamma:tFt^{-1}\cap F\neq\emptyset\} =\bigcup_{\substack{g,h\in F\\g\text{ conjugate to }h}} t_{g,h}C_\Gamma(g).$$
Since $\Gamma$ is icc, every $C_\Gamma(g)$ in this union has infinite
index. Apply property $ESD$ to these finitely many cosets.
\end{proof}

\begin{lemma}\label{lem:eci-characterization}
A countable discrete group $\Gamma$ has property $ECI$ if and only if, for
every finite subset $F\subset\Gamma\setminus\{e\}$, there exists $t\in\Gamma$
such that $t^nf\neq ft^n$ for every $f\in F$ and every $n\geq1$.
\end{lemma}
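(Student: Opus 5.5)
The plan is to prove both directions by rewriting the conjugation condition $s^nFs^{-n}\cap F=\emptyset$ in terms of commutation, and then passing between ``all $n\geq N$'' and ``all $n\geq1$'' by replacing $t$ with a suitable power.

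\emph{($\Leftarrow$).} Let $F\subset\Gamma\setminus\{e\}$ be finite, and put
\[
F'=\{f^{-1}g: f,g\in F\}\setminus\{e\},
\]
which is again finite and avoids $e$. Apply the hypothesis to $F\cup F'$ to get $t$ with $t^nx\neq xt^n$ for every $x\in F\cup F'$ and every $n\geq1$. We claim that $s=t$ and $N=1$ witness $ECI$ for $F$.

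Suppose $t^nft^{-n}=g$ for some $f,g\in F$ and $n\geq1$. If $f=g$, then $t^n$ commutes with $f\in F$, which is excluded. If $f\neq g$, the relation $t^nft^{-n}=g$ does not directly say that $t^n$ commutes with $f^{-1}g$, so this case needs care. To handle it, I would enlarge the hypothesis set in a different way: the hypothesis only controls commutation, so pairs $f\neq g$ in the same conjugacy class must be dealt with separately. Instead of $F'$, I would use the finitely many cosets $t_{f,g}C_\Gamma(f)$ from the proof of Proposition~\ref{prop:esd-implies-eci}. The bad set of $n$ for a pair is $\{n: t^n\in t_{f,g}C_\Gamma(f)\}$. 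If $n<m$ are both bad, then
\[
t^{m-n}\in C_\Gamma(f)^{\,t^n}=C_\Gamma(t^nft^{-n})=C_\Gamma(g).
\]
Hence $t^{m-n}$ commutes with $g\in F$, contradicting the hypothesis. So for each pair there is at most one bad $n$. Taking $N$ larger than all these finitely many exceptional values gives $t^nFt^{-n}\cap F=\emptyset$ for all $n\geq N$. In this direction $F'$ is therefore unnecessary: one applies the hypothesis to $F$ itself.

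\emph{($\Rightarrow$).} Assume $ECI$ and let $F\subset\Gamma\setminus\{e\}$ be finite. Apply $ECI$ to $F$ to get $s$ and $N$ with $s^nFs^{-n}\cap F=\emptyset$ for all $n\geq N$. In particular $s^nfs^{-n}\neq f$, that is, $s^n f\neq fs^n$, for every $f\in F$ and every $n\geq N$. Now set $t=s^N$. For any $n\geq1$ we have $nN\geq N$, so $t^n=s^{nN}$ fails to commute with every $f\in F$, which is exactly the required property.

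The main obstacle is the $\Leftarrow$ direction, specifically distinct conjugate pairs $f\neq g$. The coset argument above resolves this: two bad exponents for the same pair would force a positive power of $t$ to centralize $g$. Beyond that observation, the argument is bookkeeping.
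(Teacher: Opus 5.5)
Your proof is correct and essentially the paper's. For ($\Leftarrow$) the paper makes the same observation: each pair $f,h\in F$ admits at most one $n\geq1$ with $t^nft^{-n}=h$, since two such exponents give a positive power of $t$ centralizing an element of $F$. For ($\Rightarrow$) the paper keeps $t=s$ and notes that if $t^n$ commuted with $f$ then so would every $t^{kn}$, a trivial variant of your passage to $t=s^N$. You can simply delete the $F'$ and $N=1$ detour, which, as you noticed yourself, does not work and is not needed.
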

\begin{proof}
Suppose that $\Gamma$ has property $ECI$, and choose $t$ for $F$ as in the
definition. If $t^n$ commuted with some $f\in F$, then
$t^{kn}ft^{-kn}=f$ for every $k\geq1$, a contradiction.

Conversely, choose $t$ satisfying the stated condition. For each
$f,h\in F$, there is at most one $n\geq1$ with $t^nft^{-n}=h$.
Indeed, two such exponents would give a nonzero power of $t$ commuting
with $f$. There are therefore only finitely many $n\geq1$ with
$t^nFt^{-n}\cap F\neq\emptyset$. This proves property $ECI$.
\end{proof}
To obtain examples of groups with property $ECI$, we use the properties $P_{\mathrm{ana}}$, $P_{\mathrm{geo}}$ and $P_{\mathrm{nai}}$ introduced in \cite{BCD}.

A group $\Gamma$ is said to have property $P_{\mathrm{ana}}$ (see \cite[Definition~1]{BCD}) if, for every finite subset $F\subset\Gamma\setminus\{e\}$, there
exist $s\in\Gamma$ and $C>0$ such that
$\left\|\sum_{j\geq1}a_j\lambda(s^{-j}xs^j)\right\| \leq C\|a\|_2$
for every $x\in F$ and every finitely supported
$a\in\ell^2(\mathbb N)$.

\begin{proposition}\label{prop:pana-implies-eci}
Every group with property $P_{\mathrm{ana}}$ has property $ECI$.
\end{proposition}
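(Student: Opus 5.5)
By Lemma~\ref{lem:eci-characterization}, it is enough to show the following. Given a finite subset $F\subset\Gamma\setminus\{e\}$, take the element $s$ and constant $C$ supplied by property $P_{\mathrm{ana}}$ for $F$. We will show that $s^nf\neq fs^n$ for every $f\in F$ and every $n\geq1$. In other words, $t=s$ witnesses the criterion of the lemma. Equivalently, the conjugates $s^{-j}xs^j$, $j\geq1$, must not repeat periodically for any $x\in F$.

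The argument is by contradiction. Suppose some $x\in F$ and some $n\geq1$ satisfy $s^nx=xs^n$. Then $s^{-j}xs^{j}$ depends only on $j$ modulo $n$. So the sequence $j\mapsto s^{-j}xs^j$ takes at most $n$ values $y_1,\dots,y_n$ (namely $y_r=s^{-r}xs^r$), and each value is hit infinitely often. Fix the residue class $r$ and choose $a\in\ell^2(\mathbb N)$ to be the indicator of the first $m$ indices $j\equiv r \pmod n$. Then
\[
\sum_{j\geq1}a_j\lambda(s^{-j}xs^j)=m\,\lambda(y_r).
\]
This element has operator norm $m$, since $\lambda(y_r)$ is unitary. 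On the other hand, $\|a\|_2=\sqrt m$. The $P_{\mathrm{ana}}$ inequality would therefore give $m\leq C\sqrt m$ for every $m$, which fails once $m>C^2$. This contradiction shows that no nonzero power of $s$ commutes with any element of $F$. Lemma~\ref{lem:eci-characterization} then gives property $ECI$.

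I expect no real obstacle; the argument is essentially one estimate. The only point to check is that the criterion in Lemma~\ref{lem:eci-characterization} quantifies over all $n\geq1$ with a single $t$. This is exactly what the contradiction above delivers, because the same $s$ works simultaneously for all $x\in F$ and all $n$.
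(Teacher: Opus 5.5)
Your proof is correct and is essentially the paper's argument. The paper uses the multiples $n,2n,\ldots,M^2n$ with weights $M^{-1}$ (so $\|a\|_2=1$ and the sum equals $M\lambda(x)$, giving $M\leq C$), whereas you use the unnormalized indicator of $m$ indices in one residue class to get $m\leq C\sqrt m$; both then conclude via Lemma~\ref{lem:eci-characterization}.
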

\begin{proof}
Fix a finite subset $F\subset\Gamma\setminus\{e\}$, and choose $s,C$ as
above. Suppose that $s^nx=xs^n$ for some $x\in F$ and $n\geq1$.
For $M\geq1$, set $a_j=M^{-1}$ when
$j\in\{n,2n,\ldots,M^2n\}$, and $a_j=0$ otherwise. Then
$\|a\|_2=1$, whereas the displayed inequality gives
$M\|\lambda(x)\|=M\leq C$, a contradiction. Apply
Lemma~\ref{lem:eci-characterization}.
\end{proof}
\begin{definition}[\cite{BCD}, Definition~3]\label{def:pgeo}
Let $\Gamma$ act continuously on a nonempty compact Hausdorff space $B$.
The pair $(\Gamma,B)$ has property $P_{\mathrm{geo}}$ if, for every finite subset $F\subseteq\Gamma\setminus\{e\}$, there exist $t\in\Gamma$, a finite set $\{b_s:s\in S\}=\operatorname{Fix}_B(t)$, and open neighbourhoods $V_s$ of $b_s$ such that:
\begin{enumerate}
\item for every $b\in B$, the sequence $t^nb$ converges to some $b_s$;
\item $gV_s\cap V_{s'}=\emptyset$ for every $g\in F$ and $s,s'\in S$;
\item if $b\in V_s$ and $t^nb\notin V_s$ for some $n\geq1$, then $t^{n+1}b\notin V_s$.
\end{enumerate}
\end{definition}
Let $G$ be a connected real semisimple Lie group without compact factors.
If $\Gamma\leq G$ has trivial centre and Zariski-dense image in the adjoint algebraic group of $G$, then its action on the Furstenberg boundary of $G$ has property $P_{\mathrm{geo}}$ \cite[Theorem~2]{BCD}.
In particular, this applies to lattices in such groups $G$ with trivial centre.
For a concrete example, $(\operatorname{PSL}_d(\mathbb Z),\mathbb{RP}^{d-1})$ has property $P_{\mathrm{geo}}$ for every $d\geq2$ \cite[Example~1]{BCD}.
Property $P_{\mathrm{geo}}$ implies $P_{\mathrm{ana}}$ by \cite[Lemmas~2.3 and~2.4]{BCD}.
Thus these groups also have property $ECI$ by Proposition~\ref{prop:pana-implies-eci}.

\begin{definition}[\cite{BCD}, Definition~4]\label{def:pnai}
A countable discrete group $\Gamma$ has property $P_{\mathrm{nai}}$
if, for every finite subset $F\subset\Gamma\setminus\{e\}$, there
exists an infinite order element $s\in\Gamma$ such that
$\langle s,t\rangle\cong\langle s\rangle*\langle t\rangle$ for
every $t\in F$, under the canonical homomorphism.
\end{definition}
Every acylindrically hyperbolic group with no nontrivial finite normal subgroup has property $P_{\mathrm{nai}}$ by \cite[Theorem~2.3]{abbott2019property}.
There are also examples beyond this class.
By \cite[Corollary~1.3]{KS2016}, a group acting properly and cocompactly on a finite dimensional essential $\operatorname{CAT}(0)$ cube complex with no Euclidean factor has property $P_{\mathrm{nai}}$, provided it has no nontrivial finite normal subgroup.
This includes the infinite simple Burger--Mozes lattices in products of trees; see \cite[p.~545]{KS2016}.
These groups are not acylindrically hyperbolic, since a countable acylindrically hyperbolic group has uncountably many normal subgroups \cite[Theorem~8.1(a) and the following paragraph]{osin2016acylindrically}.
\begin{proposition}\label{prop:pnai-implies-eci}
Every group with property $P_{\mathrm{nai}}$ has property $ECI$.
\end{proposition}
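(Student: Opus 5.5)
We reduce to the characterization in Lemma~\ref{lem:eci-characterization}. It therefore suffices to show the following: for every finite subset $F\subset\Gamma\setminus\{e\}$ there is $t\in\Gamma$ with $t^nf\neq ft^n$ for every $f\in F$ and every $n\geq1$. We take $t=s$, where $s$ is the infinite order element supplied by property $P_{\mathrm{nai}}$ for this $F$ (Definition~\ref{def:pnai}).

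Fix $f\in F$ and $n\geq1$, and suppose towards a contradiction that $s^nf=fs^n$. By the choice of $s$, the canonical homomorphism $\langle s\rangle*\langle f\rangle\to\langle s,f\rangle$ is an isomorphism. So the relation $s^nfs^{-n}f^{-1}=e$ holds in $\Gamma$ if and only if it holds in the free product $\langle s\rangle*\langle f\rangle$.

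The word $s^n f s^{-n} f^{-1}$ is reduced in the free product with syllables alternating between the two factors. For this we need $s^n\neq e$, which holds because $s$ has infinite order. We also need $f\neq e$ in $\langle f\rangle$, which holds because $e\notin F$, and the same then holds for $f^{-1}$. The word has length $4\geq1$, so by the normal form theorem for free products it is nontrivial. This contradicts the assumed relation. Hence $s^n$ commutes with no $f\in F$ for any $n\geq1$, and Lemma~\ref{lem:eci-characterization} gives property $ECI$.

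The only delicate point is when $f$ has finite order. Then $\langle f\rangle$ is a finite cyclic group, but the normal form argument only needs $f\neq e$ in its factor, so it is unaffected. There is no real obstacle: the argument is a direct normal-form check in the free product, once it is passed through the characterization of $ECI$ by commuting powers.
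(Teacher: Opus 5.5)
Your proposal is correct and follows essentially the same route as the paper. The paper takes the $P_{\mathrm{nai}}$ element $s$, uses the free product normal form to get $s^n t\neq t s^n$ for all $t\in F$ and $n\geq 1$, and then applies Lemma~\ref{lem:eci-characterization}; you carry out that normal-form check explicitly.
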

\begin{proof}
Fix a finite subset $F\subset\Gamma\setminus\{e\}$, and choose $s$
as in Definition~\ref{def:pnai}. The free product normal form shows
that $s^nt\neq ts^n$ for every $t\in F$ and $n\geq1$. Apply
Lemma~\ref{lem:eci-characterization}.
\end{proof}
Chifan--Das--Sun proved that every finite dimensional $\Gamma$-invariant von Neumann subalgebra of $L(\Gamma)$ is $\mathbb C1$ when $\Gamma$ is icc \cite[Proposition~3.2]{CDS2023}.
We prove the analogous statement for boomerang subalgebras under property $ECI$.
\begin{proposition}\label{prop:finite-dimensional-boomerang}
Assume that $\Gamma$ has property $ECI$. Then every finite dimensional
boomerang von Neumann subalgebra $\mathcal M\leq L(\Gamma)$ is
$\mathbb C1$.
\end{proposition}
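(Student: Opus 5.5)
The plan is to follow the Chifan--Das--Sun strategy, with the boomerang recurrence along powers of one element replacing full invariance.

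\textbf{Setup.} Let $\mathcal M$ be finite dimensional. Then $\mathcal M$ is spanned by finitely many elements $x_1,\dots,x_d$ that are orthonormal in $L^2(\mathcal M,\tau)$. We may take $x_1=1$ and $\tau(x_k)=0$ for $k\geq2$. The conditional expectation is then explicit:
\[
\E_{\mathcal M}(y)=\sum_k\tau(x_k^*y)x_k .
\]
Suppose, for contradiction, that $\mathcal M\neq\bC1$, so that $d\geq2$. Fix $\epsilon>0$ small. Approximate each $x_k$ for $k\ge 2$ in $\|\cdot\|_2$ by a finitely supported element $y_k$ with support in a finite set $F\subset\Gamma\setminus\{e\}$; this is possible since $\tau(x_k)=0$. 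Apply property $ECI$ to $F$ to obtain $s\in\Gamma$ and $N\geq 1$ with $s^nFs^{-n}\cap F=\emptyset$ for all $n\geq N$.

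\textbf{Recurrence and orthogonality.} Since $\mathcal M$ is boomerang, choose a strictly increasing sequence $(n_i)$ with $\mathcal M_i=\lambda(s^{n_i})\mathcal M\lambda(s^{-n_i})\to\mathcal M$. Effros--Mar\'echal convergence gives $\|\E_{\mathcal M_i}(x_2)-\E_{\mathcal M}(x_2)\|_2\to0$, and $\E_{\mathcal M}(x_2)=x_2$. On the other hand, $\mathcal M_i$ is spanned by the conjugates $u_i x_k u_i^*$ with $u_i=\lambda(s^{n_i})$, which are orthonormal. Hence
\[
\|\E_{\mathcal M_i}(x_2)\|_2^2=\sum_k|\tau(u_ix_k^*u_i^*x_2)|^2 .
\]
The term $k=1$ vanishes because $\tau(x_2)=0$. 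For $k\geq2$ and $n_i\ge N$, the element $u_iy_ku_i^*$ is supported in $s^{n_i}Fs^{-n_i}$, which is disjoint from $F\supseteq\supp(y_2)$. Therefore $\tau(u_iy_k^*u_i^*y_2)=0$, and the approximation gives $|\tau(u_ix_k^*u_i^*x_2)|\leq 2\epsilon+\epsilon^2$ or so. It follows that $\|\E_{\mathcal M_i}(x_2)\|_2^2\leq d(3\epsilon)^2$ for all large $i$, while the limit should equal $\|x_2\|_2^2=1$. Choosing $\epsilon$ with $9d\epsilon^2<1$ (say) gives the contradiction. Thus $d=1$ and $\mathcal M=\bC1$.

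\textbf{Main point of care.} The key step is that $ECI$ provides separation for \emph{all} large powers of $s$, so it applies along whatever recurrence sequence $(n_i)$ the boomerang property supplies. This is exactly why $ECI$ was formulated with all $n\ge N$ rather than a single conjugator. The remaining bookkeeping is routine: controlling the approximation errors through $|\tau(a^*b)|\leq\|a\|_2\|b\|_2$ together with unitary invariance of $\|\cdot\|_2$, and fixing $\epsilon$ in terms of $d$ before choosing $F$ and $s$.
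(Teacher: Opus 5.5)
Your proof is correct and follows essentially the same route as the paper: an orthonormal basis of $\mathcal M\ominus\mathbb C1$, finitely supported truncations with support in $F\subset\Gamma\setminus\{e\}$, property $ECI$ applied to $F$, and disjointness of $s^{n_i}Fs^{-n_i}$ from $F$ along the recurrence sequence, contradicting $\mathbb E_{\mathcal M_i}(x_2)\to x_2$. The only cosmetic difference is that you bound $\|\mathbb E_{\mathcal M_i}(x_2)\|_2^2$ directly through its coefficients in the conjugated orthonormal basis, whereas the paper estimates $1=\langle b_1,b_1\rangle$ against $\mathbb E_{\mathcal M_i}(b_1)$ using the coefficients $\alpha(s^{n_i},j)$, which are the same quantities.
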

\begin{proof}
Let $\mathcal M\leq L(\Gamma)$ be a finite dimensional boomerang
subalgebra. Suppose that $\mathcal M\neq\mathbb C1$. Choose
$\{b_1,\ldots,b_k\}$ to be an orthonormal basis of
$\mathcal M\ominus\mathbb C1$ with respect to $\tau_0$. Thus
$\|b_j\|_2=1$ and $\tau_0(b_j)=0$ for every $1\leq j\leq k$.
Let $\epsilon>0$. By truncating the Fourier expansions of the
$b_j$, choose a finite subset
$F_\epsilon\subset\Gamma\setminus\{e\}$ and elements
$b_j^\epsilon\in\mathbb C\Gamma$ such that
$\|b_j-b_j^\epsilon\|_2<\epsilon, \qquad\operatorname{supp}(b_j^\epsilon)\subseteq F_\epsilon \quad(1\leq j\leq k)$.
In particular, $\|b_j^\epsilon\|_2\leq1+\epsilon$.
By property $ECI$, choose $s\in\Gamma$ and $N\geq1$ such that
$s^nF_\epsilon s^{-n}\cap F_\epsilon=\emptyset \qquad\text{for every }n\geq N$.

For each $t\in\Gamma$, the element
$\mathbb E_{\mathcal M}(\lambda(t)^*b_1\lambda(t))$ has trace zero.
We may therefore write
$\mathbb E_{\mathcal M}(\lambda(t)^*b_1\lambda(t)) =\sum_{j=1}^k\alpha(t,j)b_j$.
Since conditional expectations are $\|\cdot\|_2$-contractive,
$\sum_{j=1}^k|\alpha(t,j)|^2 =\|\mathbb E_{\mathcal M}(\lambda(t)^*b_1\lambda(t))\|_2^2 \leq\|b_1\|_2^2=1$.
Thus $|\alpha(t,j)|\leq1$ for every $t$ and $j$. By
Lemma~\ref{lem:conjugate-expectation},
$\mathbb E_{\lambda(t)\mathcal M\lambda(t)^*}(b_1) =\sum_{j=1}^k\alpha(t,j)\lambda(t)b_j\lambda(t)^*$.

Since $\mathcal M$ is boomerang, choose a strictly increasing
sequence $(n_i)\subseteq\mathbb N$ such that
$\mathcal M_i=\lambda(s^{n_i})\mathcal M\lambda(s^{-n_i})\to\mathcal M$
in the Effros--Mar\'echal topology. Since
$\mathbb E_{\mathcal M}(b_1)=b_1$, choose $i$ large enough that
$n_i\geq N$ and
$\left\|\sum_{j=1}^k\alpha(s^{n_i},j) \lambda(s^{n_i})b_j\lambda(s^{-n_i})-b_1\right\|_2 =\|\mathbb E_{\mathcal M_i}(b_1)-b_1\|_2<\epsilon$.
For this $i$, the choice of the Fourier truncations gives
\begin{align*}
 &\left\|\mathbb E_{\mathcal M_i}(b_1)
 -\sum_{j=1}^k\alpha(s^{n_i},j)
       \lambda(s^{n_i})b_j^\epsilon\lambda(s^{-n_i})\right\|_2\\
 &\qquad\leq\sum_{j=1}^k|\alpha(s^{n_i},j)|
          \|b_j-b_j^\epsilon\|_2\leq k\epsilon,
\end{align*}
and
$\left\|\sum_{j=1}^k\alpha(s^{n_i},j) \lambda(s^{n_i})b_j^\epsilon\lambda(s^{-n_i})\right\|_2 \leq\sum_{j=1}^k|\alpha(s^{n_i},j)|\|b_j^\epsilon\|_2 \leq k(1+\epsilon)$.
The Fourier support of this last sum is contained in
$s^{n_i}F_\epsilon s^{-n_i}$, whereas the Fourier support of
$b_1^\epsilon$ is contained in $F_\epsilon$. By
the choice of $s$ and $N$, these supports are disjoint, so
$\left\langle b_1^\epsilon, \sum_{j=1}^k\alpha(s^{n_i},j) \lambda(s^{n_i})b_j^\epsilon\lambda(s^{-n_i})\right\rangle=0$.
Consequently, by Cauchy--Schwarz and the preceding estimates,
\begin{align*}
 1=\langle b_1,b_1\rangle
 &\leq\left|\langle b_1,b_1-\mathbb E_{\mathcal M_i}(b_1)\rangle\right|\\
 &\quad+\left|\left\langle b_1,
 \mathbb E_{\mathcal M_i}(b_1)
 -\sum_{j=1}^k\alpha(s^{n_i},j)
       \lambda(s^{n_i})b_j^\epsilon\lambda(s^{-n_i})
 \right\rangle\right|\\
 &\quad+\left|\left\langle b_1-b_1^\epsilon,
 \sum_{j=1}^k\alpha(s^{n_i},j)
       \lambda(s^{n_i})b_j^\epsilon\lambda(s^{-n_i})
 \right\rangle\right|\\
 &\leq\epsilon+k\epsilon+\epsilon k(1+\epsilon).
\end{align*}
Since $\epsilon>0$ was arbitrary, this is a contradiction.
\end{proof}
\subsubsection{Triviality of amenable boomerang subalgebras}
We now pass from finite dimensional to arbitrary amenable boomerang subalgebras.
Under the following hypotheses, the conclusion holds for every amenable boomerang subalgebra, strengthening the corresponding conclusion for amenable IRAs in \cite[Theorem~B]{amrutam2309amenable}.

For an amenable von Neumann subalgebra $\mathcal M\leq L(\Gamma)$,
let $\operatorname{Hype}_{\tau_0}(\mathcal M)$ denote the states
$\varphi$ on $\mathbb B(\ell^2(\Gamma))$ such that
$\varphi|_{L(\Gamma)}=\tau_0$ and
$\varphi(xT)=\varphi(Tx)$ for every $x\in\mathcal M$ and
$T\in\mathbb B(\ell^2(\Gamma))$. This set is nonempty by
\cite[Proposition~2.4]{amrutam2309amenable}. 

Recall that an action $\Gamma\curvearrowright X$ on a compact Hausdorff space is called \emph{extremely proximal} if, for every proper closed subset $C\subsetneq X$ and every nonempty open subset $U\subseteq X$, there is $g\in\Gamma$ such that $gC\subseteq U$.
\begin{theorem}\label{prop:amenable-boomerang}
Let $\Gamma$ be a countable group satisfying one of the following conditions:
\begin{enumerate}
\item $\Gamma$ has property $P_{\mathrm{nai}}$;
\item $\Gamma$ admits a faithful extremely proximal action on a compact Hausdorff space with more than two points;
\item $\Gamma$ acts continuously on a nonempty compact Hausdorff space $B$ such that $(\Gamma,B)$ has property $P_{\mathrm{geo}}$.
\end{enumerate}
Then every amenable boomerang von Neumann subalgebra $\mathcal M\leq L(\Gamma)$ is $\mathbb C1$.
\end{theorem}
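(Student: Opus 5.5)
The strategy is the one behind \cite[Theorems~A and~B]{amrutam2309amenable}. There, a hypertrace for an amenable invariant subalgebra is combined with a boundary action to force $\E_{\mathcal M}(\lambda(g))=0$ for all $g\neq e$. Here invariance under the whole group will be replaced by recurrence along powers of a single element, which is exactly what the boomerang property supplies. The final step is the same in all three cases: it suffices to show $\E_{\mathcal M}(\lambda(g))=0$ for every $g\neq e$. Then every $x\in\mathcal M$ satisfies $x=\E_{\mathcal M}(x)=\tau_0(x)1$, so $\mathcal M=\bC1$. Equivalently, we must show $\phi_{\mathcal M}=\delta_e$.

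\emph{Key steps.} Fix $\varphi\in\operatorname{Hype}_{\tau_0}(\mathcal M)$. The first step is a recurrence inequality. For a unitary $u\in\mathcal M$ and $T\in\mathbb B(\ell^2(\Gamma))$ we have $\varphi(uTu^*)=\varphi(T)$. Passing to the conjugates $\mathcal M_i=\lambda(s^{n_i})\mathcal M\lambda(s^{-n_i})\to\mathcal M$, the state $\varphi_i=\varphi\circ\operatorname{Ad}\lambda(s^{-n_i})$ is a hypertrace for $\mathcal M_i$. Take a weak$^*$ cluster point $\psi$ of $(\varphi_i)$. It restricts to $\tau_0$ on $L(\Gamma)$. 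Because $\E_{\mathcal M_i}(x)\to\E_{\mathcal M}(x)$ in $\|\cdot\|_2$ and $|\varphi_i(yT)|\le\|y\|_2\|T\|$ (Cauchy--Schwarz with $\varphi_i|_{L(\Gamma)}=\tau_0$), $\psi$ is again $\mathcal M$-central. So $\psi\in\operatorname{Hype}_{\tau_0}(\mathcal M)$, and its restriction to $C(X)$ is a probability measure $\nu$ that is a limit of $(s^{n_i})_*\nu_0$, where $\nu_0$ is the measure coming from $\varphi$.

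The second step treats case (3). Given a finite $F\subset\Gamma\setminus\{e\}$, take $t$ from $P_{\mathrm{geo}}$ and apply the first step with $s=t$. By condition (1) of Definition~\ref{def:pgeo}, the limit measure $\nu$ is supported on $\operatorname{Fix}_B(t)=\{b_s\}$. Choose $f\in C(B)$ with $f\equiv1$ near each $b_s$ and supported in $\bigcup V_s$. By condition (2), $f\cdot(g\cdot f)=0$ for $g\in F$. Now use the standard computation: for $x=\E_{\mathcal M}(\lambda(g))$, centrality gives
\[
\psi(f\lambda(g)f\,x^*)\approx\tau_0(\lambda(g)x^*)=\|\E_{\mathcal M}(\lambda(g))\|_2^2 ,
\]
with the approximation justified because $f\equiv 1$ $\nu$-a.e. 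On the other hand, $f\lambda(g)f=\lambda(g)(g^{-1}\!\cdot f)f=0$. Hence $\phi_{\mathcal M}(g)=0$ for $g\in F$. Condition (3) of Definition~\ref{def:pgeo} is what is needed to control the cluster point uniformly along the sequence $n_i$, rather than only in a Ces\`aro sense.

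The third step treats cases (1) and (2). For (2) we follow \cite[Theorem~A]{amrutam2309amenable}. A faithful extremely proximal action on a space with more than two points provides, for any finite $F$, a hyperbolic-type element $s$ with attracting and repelling points whose powers contract everything outside a neighbourhood of the repelling point. Faithfulness lets us pick these points so that $gU\cap U=\emptyset$ for small neighbourhoods $U$ of them, and then the argument of the second step runs with $s$ in place of $t$. For (1) we use $P_{\mathrm{nai}}$. Here one could instead reduce to the GNS vanishing of Lemma~\ref{lem:thickness-vanishing}. Alternatively, realize the free product $\langle s\rangle*\langle g\rangle$ acting on the Gromov boundary of its Bass--Serre tree, where the boundary of $\langle s\rangle$ consists of two ping-pong points, and run the second step there; $C(\partial)$ embeds $\Gamma$-equivariantly only for the subgroup, but the hypertrace can be restricted to the subgroup's $\ell^2$ cosets. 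To combine this with recurrence, we use Lemma~\ref{notzerounderconditionalexpec} and Proposition~\ref{prop:support}. Suppose $\E_{\mathcal M}(\lambda(g))\neq0$. Then infinitely many conjugates $s^{-n}gs^{n}$ also have nonzero expectation. The free product normal form makes the words $s^{-n}gs^{n}s^{-m}g^{-1}s^{m}$ reduced and distinct, and this gives the contradiction.

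\emph{Main obstacle.} The delicate point is the first step combined with the dynamics: proving that the cluster point $\psi$ of the twisted hypertraces is genuinely $\mathcal M$-central (as opposed to merely $\mathcal M_i$-central along the sequence), and that its boundary measure concentrates on the fixed points of $t$ along the specific recurrence subsequence $n_i$, not just on average. I would first try to get both by the $\|\cdot\|_2$-estimate $|\varphi_i(yT)-\varphi_i(Ty)|\le 2\|y-\E_{\mathcal M_i}(y)\|_2\|T\|$, together with the uniform convergence in condition (1) of $P_{\mathrm{geo}}$ on compact sets avoiding the repelling points.
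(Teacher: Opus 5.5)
Your Steps~1 and~2 are the paper's argument, and they are correct. You twist a hypertrace along the recurrence sequence, check that a cluster point $\psi$ is again an $\mathcal M$-hypertrace, localize $\psi|_{C(X)}$ on the attracting set of $t$, and then kill the Fourier coefficient with a cut-off function. Your estimate $|\varphi_i(yT)-\varphi_i(Ty)|\le 2\|y-\E_{\mathcal M_i}(y)\|_2\|T\|$ proves the centrality that the paper imports from \cite[Proposition~4.7]{amrutam2309amenable}. Your ``$\approx$'' is in fact an exact identity: $\|\E_{\mathcal M}(\lambda(g))\|_2^2=\psi(x^*\lambda(g))=\psi(x^*\lambda(g)f)=\psi(\lambda(g)fx^*)=\psi(f\lambda(g)fx^*)=0$. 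This uses only $\psi(T(1-f))=\psi((1-f)T)=0$ and centrality once, so it is slightly shorter than the paper's double Cauchy--Schwarz.

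The one genuine problem is case~(1) as you wrote it. Your closing argument cannot yield a contradiction. It combines nonvanishing of $\E_{\mathcal M}(\lambda(s^{-n}gs^n))$ for infinitely many $n$ (via Lemma~\ref{notzerounderconditionalexpec}) with reducedness of the words $s^{-n}gs^ns^{-m}g^{-1}s^m$, but it never uses amenability. Indeed $\mathcal M=L(\Gamma)$ is a boomerang subalgebra with $\E_{\mathcal M}(\lambda(g))=\lambda(g)\neq0$. The suggested reduction to Lemma~\ref{lem:thickness-vanishing} fails for the same reason: nothing supplies a $\phi_{\mathcal M}$-thick set on which $\phi_{\mathcal M}$ vanishes. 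Also, in the Bass--Serre tree of the free-product splitting of $\langle s\rangle*\langle g\rangle$, $s$ lies in a vertex group and is elliptic, so there are no ping-pong points there.

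Drop all of this and commit to your boundary route, which is what the paper does. The group $\Lambda=\langle s,g\rangle$ is non-elementary hyperbolic, $s$ has north--south dynamics on the Gromov boundary $\partial\Lambda$, and $g\{s^{\pm\infty}\}\cap\{s^{\pm\infty}\}=\emptyset$. Do not restrict the hypertrace. Instead, represent $C(\partial\Lambda)$ on $\ell^2(\Gamma)$ coset by coset, setting $f\delta_{\ell c}=f(\ell x_0)\delta_{\ell c}$ for $\ell\in\Lambda$ and fixed right-coset representatives $c$. This representation is $\operatorname{Ad}\lambda(\Lambda)$-equivariant, which is all Steps~1--2 need, since they only conjugate by $\lambda(s)$ and $\lambda(g)$. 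Recurrence is already built into Step~1. Your remark about cosets is a point the paper leaves implicit.

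Two smaller corrections. First, condition~(3) of Definition~\ref{def:pgeo} plays no role. Pointwise convergence in condition~(1), together with dominated convergence, gives $\psi|_{C(B)}=\sum_s\nu_0(K_s)\delta_{b_s}$ along any subsequence, so there is no Ces\`aro issue and no uniformity is needed; this also disposes of your ``main obstacle''.

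Second, in case~(2), work with $F=\{g\}$. A simultaneous choice for a larger $F$ can fail, because the action need not be topologically free. Extreme proximality does not hand you attracting and repelling fixed points. What you actually use is $t(X\setminus U)\subseteq V$ with $U\cap V=\emptyset$, which gives $t^n(X\setminus U)\subseteq V$ for all $n\geq1$, so the limit measure lives on $\overline U\cup\overline V$. To choose $U\ni x_1$ and $V\ni x_2$ with $g(\overline U\cup\overline V)\cap(\overline U\cup\overline V)=\emptyset$, you need that $X$ has no isolated points; this follows from extreme proximality and $|X|>2$. Your cut-off should then be $1$ on $\overline U\cup\overline V$ and supported in an open $W\supseteq\overline U\cup\overline V$ with $gW\cap W=\emptyset$, which exists by normality.
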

\begin{proof}
Let $\mathcal M$ be an amenable boomerang subalgebra of $L(\Gamma)$.
Let us first assume that $\Gamma$ has property $P_{\mathrm{nai}}$.
We shall show that for every $a\in\mathcal M$, $\tau_0(a\lambda(h)^*)=0$ for every $h\ne e$.
Let $h\ne e$.
Since $\Gamma$ has property $P_{\mathrm{nai}}$, we can find $t\in\Gamma$ of infinite order such that $\Lambda=\langle h,t\rangle\cong\langle h\rangle\star\langle t\rangle$.
In particular, $\Lambda$ is a non-elementary hyperbolic group.
Set $X=\partial\Lambda$.
Since $\langle h,t\rangle\cong\langle h\rangle\star\langle t\rangle$, it follows that $h\{x_t^+,x_t^-\}\cap\{x_t^+,x_t^-\}=\emptyset$, where $x_t^+,x_t^-$ are the fixed points of $t$.
Since $\mathcal M$ is a boomerang subalgebra of $L(\Gamma)$, there exists an increasing sequence of distinct elements $\{n_i\}\subseteq\mathbb N$ such that
$\lambda(t)^{n_i}\mathcal M\lambda(t)^{-n_i}\to\mathcal M \text{ in EM-topology}$.

Since $\mathcal M$ is amenable, it follows from \cite[Proposition~2.4]{amrutam2309amenable} that $\operatorname{Hype}_{\tau_0}(\mathcal M)\ne\emptyset$.
Let $\varphi\in\operatorname{Hype}_{\tau_0}(\mathcal M)$.
We now observe that
$$\varphi_i(\cdot)=\varphi\left(\lambda(t)^{-n_i}\cdot\lambda(t)^{n_i}\right) \in\operatorname{Hype}_{\tau_0}(\lambda(t)^{n_i}\mathcal M\lambda(t)^{-n_i}).$$
Therefore, after passing to a subnet and using the proof of \cite[Proposition~4.7]{amrutam2309amenable}, we see that
$\psi:=\text{weak}^*-\lim_i\varphi_i \in\operatorname{Hype}_{\tau_0}(\mathcal M)$.
Moreover, since $t$ has north--south dynamics on $X$, it follows that $\psi|_{C(X)}=c\delta_{x_t^+}+(1-c)\delta_{x_t^-}$ for some $c\in[0,1]$.
Since $h\{x_t^+,x_t^-\}\cap\{x_t^+,x_t^-\}=\emptyset$, we can find a non-negative continuous function $f\in C(X)$ with $f|_{\{x_t^+,x_t^-\}}=1$ and $f|_{h\{x_t^+,x_t^-\}}=0$.
Since $\psi((1-f)^2)=0$, Cauchy--Schwarz gives
\begin{align*}
\tau_0(a\lambda(h)^*)
&=\psi(a\lambda(h)^*)\\
&=\psi\left(a\lambda(h)^*f\right)\\
&=\psi\left(a(h^{-1}\cdot f)\lambda(h)^*\right)\\
&=\psi\left(a\sqrt{h^{-1}\cdot f}\sqrt{h^{-1}\cdot f}\lambda(h)^*\right).
\end{align*}
Using the Cauchy--Schwarz inequality, we see that
\begin{align*}
|\psi(a\lambda(h)^*)|
&\leq\sqrt{\psi(a(h^{-1}\cdot f)a^*)}
\sqrt{\psi(\lambda(h)(h^{-1}\cdot f)\lambda(h)^*)}\\
&=\sqrt{\psi((h^{-1}\cdot f)a^*a)}\sqrt{\psi(f)},
\end{align*}
where the last equality uses the fact that $\psi$ is a $\mathcal M$-hypertrace.

Observe that $(h^{-1}\cdot f)(x_t^\pm)=f(hx_t^\pm)=0$, hence, another application of the Cauchy--Schwarz inequality gives
$$|\psi((h^{-1}\cdot f)a^*a)|^2 \leq\psi((h^{-1}\cdot f)^2)\psi((a^*a)^2) =0.$$
Thus $\tau_0(a\lambda(h)^*)=0$.
Since $h\ne e$ was arbitrary, $\mathcal M=\mathbb C1$.

Assume now that $\Gamma$ admits a faithful extremely proximal action on a space $X$ with more than two points.
Fix $g\in\Gamma\setminus\{e\}$.
As before, we show that $\tau_0(a\lambda(g)^*)=0$ for every $a\in\mathcal M$.
First, $X$ has no isolated points.
Indeed, if $z$ were isolated, extreme proximality would send the proper closed set $X\setminus\{z\}$ into $\{z\}$, contradicting injectivity since $|X|>2$.
By faithfulness, $X\setminus\operatorname{Fix}(g)$ is a nonempty open set.
Since $X$ has no isolated points, this set is infinite.
Choose $x_1\notin\operatorname{Fix}(g)$ and $x_2\notin\operatorname{Fix}(g)\cup\{x_1,gx_1,g^{-1}x_1\}$.
Thus $g\{x_1,x_2\}\cap\{x_1,x_2\}=\emptyset$.
Choose open neighbourhoods $U\ni x_1$ and $V\ni x_2$ such that
$\overline U\cap\overline V=\emptyset, \quad g(\overline U\cup\overline V)\cap(\overline U\cup\overline V)=\emptyset$.
By extreme proximality, choose $t\in\Gamma$ such that $t(X\setminus U)\subseteq V$.
Equivalently, $t^{-1}(X\setminus V)\subseteq U$.
Since $U\cap V=\emptyset$, one has $t(V)\subseteq V$.
Consequently, $t^n(X\setminus U)\subseteq V$ for every $n\geq1$.
Choose $\varphi\in\operatorname{Hype}_{\tau_0}(\mathcal M)$.
By the boomerang property, choose a strictly increasing sequence $(n_j)$ such that $\mathcal M_j=\lambda(t^{n_j})\mathcal M\lambda(t^{-n_j})\to\mathcal M$ in the Effros--Mar\'echal topology.
Set $\varphi_j=\varphi\circ\operatorname{Ad}(\lambda(t^{-n_j}))$, and take a weak$^*$ convergent subnet with limit $\psi$.
As in case $(1)$, it follows that $\psi\in\operatorname{Hype}_{\tau_0}(\mathcal M)$.
Let $\nu=\varphi|_{C(X)}$, regarded as a probability measure on $X$.
If $f\in C(X)$ vanishes on $\overline U\cup\overline V$, then $f(t^ny)\to0$ for every $y\in X$.
Dominated convergence gives
$\varphi_j(f)=\int_Xf(t^{n_j}y)\,d\nu(y)\longrightarrow0$.
Thus $\psi(f)=0$ for every such $f$.
Choose $0\leq f\leq1$ in $C(X)$ with $f|_{\overline U\cup\overline V}=1$ and $f|_{g(\overline U\cup\overline V)}=0$.
It follows that
$\psi((1-f)^2)=\psi((g^{-1}\cdot f)^2)=0$.
Arguing as in case $(1)$, using the Cauchy--Schwarz inequality and the $\mathcal M$-centrality of $\psi$, we obtain that $\tau_0(a\lambda(g)^*)=0$ for every $a\in\mathcal M$.
Since $g\neq e$ was arbitrary, $\mathcal M=\mathbb C1$.

Finally, assume that $(\Gamma,B)$ has property $P_{\mathrm{geo}}$.
Fix $g\in\Gamma\setminus\{e\}$.
Apply \cite[Definition~3]{BCD} with $F=\{g\}$.
There are $t\in\Gamma$, a finite set $\{b_s:s\in S\}\subseteq B$ of fixed points of $t$, and open neighbourhoods $V_s\ni b_s$ such that every $t^nb$ converges to some $b_s$, and $gV_s\cap V_{s'}=\emptyset$ for all $s,s'\in S$.
In particular,
$g\{b_s:s\in S\}\cap\{b_s:s\in S\}=\emptyset$.
Set $K_s=\{b\in B:\lim_{n\to\infty}t^nb=b_s\}$.
These sets form a Borel partition of $B$.
Choose $\varphi\in\operatorname{Hype}_{\tau_0}(\mathcal M)$ and a recurrence sequence $(n_j)$ for $\mathcal M$ under conjugation by $\lambda(t)$.
As in case $(1)$, a weak$^*$ convergent subnet of $\varphi\circ\operatorname{Ad}(\lambda(t^{-n_j}))$ has limit $\psi\in\operatorname{Hype}_{\tau_0}(\mathcal M)$.
Let $\nu=\varphi|_{C(B)}$, regarded as a probability measure on $B$.
For $f\in C(B)$, dominated convergence gives
$\lim_j\int_B f(t^{n_j}b)\,d\nu(b) =\sum_{s\in S}\nu(K_s)f(b_s)$, hence, $$\psi|_{C(B)}=\sum_{s\in S}\nu(K_s)\delta_{b_s}.$$
Choose $0\leq f\leq1$ in $C(B)$ with $f(b_s)=1$ and $f(gb_s)=0$ for every $s\in S$.
Then $\psi((1-f)^2)=\psi((g^{-1}\cdot f)^2)=0$.
Arguing as in case $(1)$, using the Cauchy--Schwarz inequality and the $\mathcal M$-centrality of $\psi$, we obtain $\tau_0(a\lambda(g)^*)=0$ for every $a\in\mathcal M$.
Since $g\neq e$ was arbitrary, $\mathcal M=\mathbb C1$.
\end{proof}
Of course, there are many natural classes of examples which admit extremely proximal faithful actions that are not topologically free. Let us consider the class of Thompson's groups first.
\begin{example}\normalfont
Every amenable boomerang von Neumann subalgebra of $L(T)$ is $\mathbb C1$, where $T$ is Thompson's group.
The natural action of Thompson's group $T$ on $\mathbb S^1$ is faithful but not topologically free; see \cite[Sections~1 and~5]{CFP1996}.
Indeed, $T$ contains nonidentity elements supported in a proper closed arc, which therefore fix the complementary open arc pointwise.
Let $C\subsetneq\mathbb S^1$ be closed and let $U\subseteq\mathbb S^1$ be nonempty and open.
Choose proper closed arcs $I,J$ with dyadic endpoints such that $C\subseteq I$ and $J\subseteq U$.
There is $t\in T$ mapping $I$ onto $J$, so $tC\subseteq U$.
Thus the action is extremely proximal, and Theorem~\ref{prop:amenable-boomerang} applies.
\end{example}
\begin{example}\normalfont
Let $\mathcal C$ be the binary Cantor set.
The natural action of Thompson's group $V$ on $\mathcal C$ is faithful,
extremely proximal and not topologically free; see the proof of
\cite[Theorem~4.6\textup{(iii)}]{LBMB2018}.
The same holds for every countable subgroup
$\Gamma\leq\operatorname{Homeo}(\mathcal C)$ containing $V$.
Consequently, every amenable boomerang von Neumann subalgebra of
$L(\Gamma)$ is $\mathbb C1$ by
Theorem~\ref{prop:amenable-boomerang}.
\end{example}
The examples above may suggest that our result applies only to
$C^*$-simple groups. The following example of Le Boudec~\cite{LeBoudec2017} dispels
this impression.
\begin{example}\normalfont
Let $\Omega$ be a finite set with $|\Omega|\geq3$, and let
$F<F'\leq\operatorname{Sym}(\Omega)$, where $F$ acts freely and
transitively on $\Omega$.
Consider Le Boudec's group $\Gamma=G(F,F')$ from
\cite{LeBoudec2017}.
Its action on the boundary of the $|\Omega|$-regular tree is faithful
and not topologically free; see the proof of
\cite[Theorem~C]{LeBoudec2017}.
It is extremely proximal by \cite[Proposition~4.26]{LBMB2018}.
Thus every amenable boomerang von Neumann subalgebra of $L(\Gamma)$
is $\mathbb C1$ by Theorem~\ref{prop:amenable-boomerang}.
These groups have trivial amenable radical but are not $C^*$-simple
\cite[Theorem~C]{LeBoudec2017}.
\end{example}

\section{Split higher rank lattices.}
\label{sec:higherrank}
We now prove Theorem~\ref{thm:ksplit} for the groups from Subsection~\ref{ksplit}. We first use Proposition~\ref{prop:support} to locate the Fourier support of the conditional expectation of a highest root element.
\begin{lemma}\label{lem:split-localization}
Let $\Gamma$ be as in Subsection \ref{ksplit} and $M \leq L(\Gamma)$ be a boomerang subalgebra. Then
\[\E_M(\lambda(x_{\theta}(t))) \in L(\bfU_{\theta}(K) \cap \Gamma), \qquad {\text{whenever }} x_{\theta}(t) \in \Gamma.\]
\end{lemma}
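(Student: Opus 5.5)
The plan is to apply Proposition~\ref{prop:support} and then compute. Put $s=x_\theta(t)\in\Gamma$ with $t\neq0$; if $t=0$ then $s=e$ and there is nothing to prove. Proposition~\ref{prop:support} gives
\[
\operatorname{supp}\bigl(\E_M(\lambda(s))\bigr)\subseteq R(s)=\bigcap_{c\in C_\Gamma(s)}\ \bigcup_{k\geq1}C_\Gamma(c^k).
\]
So it suffices to show $R(s)\subseteq\bfU_\theta(K)\cap\Gamma=\Upsilon_\theta$. For that it is enough to find finitely many elements $c_1,\dots,c_r\in C_\Gamma(s)$ with
\[
\bigcap_{i=1}^r\ \bigcup_{k\geq1}C_\Gamma(c_i^k)\subseteq\bfU_\theta(K).
\]
Membership of the Fourier coefficients in $L(\Upsilon_\theta)$ then follows at once, since an element whose Fourier support lies in a subgroup $H$ belongs to $L(H)$.

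The first step removes the union over powers. I would choose every $c_i$ to be a nontrivial unipotent element. For a unipotent $c$ in characteristic zero, $c=\exp(X)$ with $X$ nilpotent, and $c^k=\exp(kX)$. So $c$ and $c^k$ have the same Zariski closure, a one-parameter unipotent subgroup, and hence $C_{\bfG}(c^k)=C_{\bfG}(c)$ for all $k\geq1$. Therefore $\bigcup_k C_\Gamma(c_i^k)=C_\Gamma(c_i)$, and the problem reduces to showing that $\bigcap_i C_{\bfG}(c_i)\cap\bfG(K)\subseteq\bfU_\theta$.

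The second step chooses the $c_i$. Since $\theta$ is the highest root, $\theta+\alpha$ is not a root for any positive $\alpha$. Hence $\bfU_\theta$ is central in $\bfU$, and all of $\bfU\cap\Gamma$ centralizes $s$. Pick $c_1\in\bfU\cap\Gamma$ regular unipotent. This is possible because $\bfU\cap\Gamma$ is Zariski dense in $\bfU$, being an arithmetic subgroup of a unipotent group, and regular elements form a nonempty open subset. For adjoint split $\bfG$, $C_{\bfG}(c_1)$ is a connected abelian unipotent group of dimension $\operatorname{rank}\bfG$, contained in $\bfU$. It contains $\bfU_\theta$, which is the "top" piece of the centralizer.

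Next I would choose $w\in C_\Gamma(s)$ normalizing the Levi-type subgroup generated by the roots orthogonal to $\theta$. In the $\PGL_n$ example this is a permutation-type element in the middle $\SL_{n-2}(\bZ)$ block, which fixes $E_{1n}$. Set $c_2=wc_1w^{-1}$. This is again regular unipotent, lies in the conjugated Borel $w\bfB w^{-1}\supseteq\bfU_\theta$, and commutes with $s$. If needed, add a few more such conjugates $c_3,\dots,c_r$.

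I then expect $C_{\bfG}(c_1)\cap C_{\bfG}(c_2)\cap\cdots=\bfU_\theta$. In the $\PGL_n$ example this can be checked directly. $C(c_1)$ consists of polynomials $I+a_1N+\dots+a_{n-1}N^{n-1}$ in the regular nilpotent $N=\log c_1$, and $N^{n-1}$ is a multiple of $E_{1n}$. Intersecting with the analogous set for $wNw^{-1}$ forces $a_1=\dots=a_{n-2}=0$, which leaves exactly $\bfU_\theta$.

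The main obstacle is this last intersection computation for a general $K$-split absolutely simple adjoint $\bfG$. It requires a uniform description of $C_{\bfG}(u)$ for regular unipotent $u$: for instance, via the principal $\mathfrak{sl}_2$ grading, the centralizer of a regular nilpotent $e$ is spanned by pieces in positive degrees, with the top degree equal to $\mathfrak g_\theta$. It also requires a choice of $w\in C_\Gamma(s)\cap N_{\bfG}(\bfT)$, coming from the Weyl group of the roots orthogonal to $\theta$, that moves all non-top pieces of $\mathfrak z(e)$ off themselves. I would first try to handle this at the Lie algebra level, using that the degree-$d$ components of $\mathfrak z(e)$ are spanned by sums of root vectors of height $d$. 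I would then pass to groups via $\exp$, which is legitimate because all the centralizers involved are unipotent. If the higher-rank hypothesis is used anywhere, it is here, in guaranteeing enough roots orthogonal to $\theta$ to supply such a $w$, or else enough $K$-points in the rank-one complement via the $S$-arithmetic structure.
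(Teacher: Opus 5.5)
\textbf{There is a genuine gap.} Your reduction via Proposition~\ref{prop:support} is fine and matches the paper, and so is the observation that $C_\Gamma(c^k)=C_\Gamma(c)$ for unipotent $c$. The problem is the second step. You never prove $\bigcap_i C_{\bfG}(c_i)=\bfU_\theta$ for general $\bfG$; you flag this yourself.

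Worse, the mechanism you propose fails already in the running example with $n=3$. That mechanism conjugates one regular unipotent $c_1$ by elements $w\in C_\Gamma(s)$. Take $\Gamma=\SL_3(\bZ)=\PSL_3(\bZ)$. In type $A_2$ there are no roots orthogonal to $\theta$, and the ``middle $\SL_{n-2}$ block'' is trivial. A direct computation gives
\[
C_\Gamma(x_{1,3}(t))=\bfU(\bZ)\rtimes\{1,\diag(-1,1,-1)\}.
\]
Let $c_1\in\bfU(\bZ)$ be regular, with simple-root coordinates $p,r\neq0$. Then
\[
C_{\bfG}(c_1)=\{u\in\bfU: u_{12}\,r=u_{23}\,p\},
\]
which is two-dimensional. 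This set is preserved by conjugation by $\bfU$, which only changes the $(1,3)$-entry. It is also preserved by conjugation by $\diag(-1,1,-1)$, which negates $u_{12}$ and $u_{23}$. So every conjugate $wc_1w^{-1}$ with $w\in C_\Gamma(s)$ has the same centralizer. Your intersection therefore stays equal to $C_{\bfG}(c_1)\supsetneq\bfU_\theta$, and adding more such conjugates cannot help. Your guess that the higher-rank hypothesis enters here is also misplaced: the lemma holds for any split adjoint $\bfG$.

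\textbf{The fix.} You do not need a regular element or Weyl-type conjugates. You already noted the two necessary facts: all of $\bfU\cap\Gamma$ commutes with $s$, and $\bfU\cap\Gamma$ is Zariski dense in $\bfU$. Intersect over this whole subgroup in the definition of $R(s)$. This gives
\[
R(s)\subseteq C_\Gamma(\bfU\cap\Gamma)=\Gamma\cap C_{\bfG}(\bfU).
\]
What remains is the structural fact $C_{\bfG}(\bfU)=\bfU_\theta$, which the paper proves as follows.
\begin{itemize}
\item An element centralizing $\bfU$ lies in $N_{\bfG}(\bfU)=\bfB$, so write it as $tu$ with $t\in\bfT$ and $u\in\bfU$.
\item The torus part $t$ acts trivially on $\bfU/[\bfU,\bfU]\cong\prod_{\alpha\in\Psi}\bfU_\alpha$, so every simple root is trivial on $t$.
\item Adjointness then forces $t=e$, and hence $u\in Z(\bfU)=\bfU_\theta$.
\end{itemize}
If you prefer to keep your finite-selection framework, take generic elements of $\bfU\cap\Gamma$ that are not $C_\Gamma(s)$-conjugate to one another. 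In the $\SL_3$ case, two regular elements with non-proportional coordinates $(p,r)$ already suffice.
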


\begin{proof}
Since $\bfU_\theta=Z(\bfU)$, every
$a\in\Gamma\cap\bfU(K)$ commutes with $x_\theta(t)$.
Moreover, $C_\Gamma(a^m)=C_\Gamma(a)$ for every $m\geq1$. Indeed, in a faithful linear representation, commuting with $a^m$ implies
commuting with $\log(a^m)=m\log a$, and hence with $a$.
Proposition~\ref{prop:support} therefore gives
$\supp \E_M(\lambda(x_\theta(t))) \subseteq R(x_\theta(t)) \subseteq C_\Gamma(\Gamma\cap\bfU(K))$.
The group $\Gamma\cap\bfU(K)$ is Zariski dense in $\bfU$.
Indeed, commensurability supplies a nontrivial root element in
$\Gamma$ for every positive root; its powers are Zariski dense in
that root group, and the positive root groups generate $\bfU$.

It remains to show that  $C_{\bfG}(\bfU)=\bfU_\theta$. To see this,
an element centralizing $\bfU$ belongs to
$N_{\bfG}(\bfU)=\mathbf B$. Its torus component acts
trivially on $\bfU/[\bfU,\bfU]$, hence all simple
roots take the value $1$ on it. Since $\bfG$ is adjoint, that
component is trivial. The element therefore belongs to
$Z(\bfU)=\bfU_\theta$. 
\end{proof}
We next construct suitable conjugates of the highest root group inside $\bfG_\theta$. The following properties will allow us to compare the Fourier coefficients of products of conditional expectations.
\begin{proposition} \label{prop:mat_conf}
Let $\Gamma_{\theta} = \Gamma \cap \bfG_\theta(K)$ and $\Upsilon_{\theta} = \Gamma \cap \bfU_\theta(K)$. Then there is some element $s \in \Upsilon_{\theta}$ and an infinite sequence $\gamma_i \in \Gamma_{\theta}$. Write $w_i := \gamma_i w \gamma_i^{-1}$ and define the product map
$\psi_{i,j}:\Upsilon_{\theta} \times \Upsilon_{\theta} \rightarrow \Gamma_{\theta}, \qquad \psi_{i,j}(u,v) = u_iv_j = \gamma_i u \gamma_i^{-1} \gamma_j v \gamma_j^{-1}$.
The following properties hold for every $i \ne j$:
\begin{enumerate}
\item \label{itm:injectiv} 
$\psi_{i,j}$ is injective.
\item \label{itm:commutators}
If for $k,n \in \bZ \setminus \{0\}$ and $v,w \in \Upsilon_{\theta}$ we have
\[[s_i^k v_j,(w_i s_j^k)^n] =
[\gamma_i s^k \gamma_i^{-1}\gamma_j v \gamma_j^{-1}, (\gamma_i w \gamma_i^{-1} \gamma_j s^k \gamma_j^{-1})^n] = 1,\] then $v = w = s^k$. 
\end{enumerate}
\end{proposition}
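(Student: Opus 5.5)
Everything happens inside $\bfG_\theta(K)$, which is $\SL_2(K)$ or $\PGL_2(K)$; I will treat it as $\SL_2$ modulo $\pm I$. Then $\Upsilon_\theta$ is a lattice of upper unipotents $u(a)=\begin{pmatrix}1&a\\0&1\end{pmatrix}$ with $a$ ranging over a finite-index subgroup $\Lambda$ of a fractional ideal of $\cO_S$, and $\Gamma_\theta$ is commensurable with $\SL_2(\cO_S)$. I take $s=u(a_0)$ for a fixed nonzero $a_0\in\Lambda$. I choose $\gamma_i\in\Gamma_\theta$ so that the attracting points $p_i=\gamma_i\cdot\infty\in\bP^1(K)$ are pairwise distinct; this is possible because $\Gamma_\theta$ has infinitely many cusps in its $\Gamma_\theta$-orbit of $\infty$. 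Then $u_i=\gamma_i u\gamma_i^{-1}$ is a parabolic element fixing exactly $p_i$, and $\gamma_i\Upsilon_\theta\gamma_i^{-1}$ lies in the unipotent radical $\bfU_{p_i}$ of the Borel subgroup stabilizing $p_i$.

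\textbf{Injectivity (1).} Suppose $u_iv_j=u'_iv'_j$. Then $(u'_i)^{-1}u_i=v'_j v_j^{-1}$ lies in $\bfU_{p_i}\cap\bfU_{p_j}$. For $p_i\neq p_j$ these two unipotent radicals meet trivially, since the stabilizers of two distinct points meet in a torus, which has no nontrivial unipotent elements. Hence $u=u'$ and $v=v'$, using that conjugation by $\gamma_i$ is injective.

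\textbf{Commutator condition (2).} Set $x=s_i^kv_j$ and $y=w_is_j^k$, and suppose $x$ commutes with $y^n$. I will use the standard fact in $\SL_2$ or $\PGL_2$ over a field: two elements commute essentially only when they lie in a common one-parameter group, that is, a common torus or a common unipotent group up to $\pm$. More precisely, if $y^n$ is noncentral, then $C(y^n)$ is abelian and equal to $C(y)$, up to the finite ambiguity $\pm I$. The steps are then as follows.

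\emph{Step a.} If $y^n$ is central (modulo $\pm I$), compute traces. Conjugate so that $p_i=\infty$ and $p_j=0$. Then $w_i=u(b)$ and $s^k_j=\ell(c)$ with $c\ne0$, where $\ell(c)$ is the lower unipotent. The product $u(b)\ell(c)$ has trace $2+bc$. Its power is central only if $bc\in\{-4,-2,-1,-3\}$, the finite-order cases. Since $\Lambda$ is discrete in $\prod_{v\in S}K_v$, choosing the $\gamma_i$ ``generic'' rules these out. Concretely, I rescale by choosing the $\gamma_i$ so that the relevant cross-ratio-type scalars $c(i,j)$ are large in some archimedean or $S$-adic absolute value. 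I expect this step to be the main obstacle: it requires choosing $\gamma_i$ so that no products of the form $\mathrm{const}\cdot b\cdot a_0k$ hit a finite set. I would first try making the $p_i$ grow rapidly in a single archimedean place, e.g.\ $p_i=1/m_i$ with $m_i\in\cO_S$ and $|m_i|\to\infty$ very fast, and check that $bc$ is then a nonzero element of a fixed lattice scaled by $m_i-m_j$, hence avoids the finite set for $b\neq0$. The case $b=0$ is excluded separately, because then $y$ is a nontrivial unipotent and its powers are noncentral.

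\emph{Step b.} If $y^n$ is noncentral, then $x$ commutes with $y$ up to sign, so $x$ and $y$ generate an abelian group. Now compare with the normalized forms $x=u(a_0k)\ell(d)$ and $y=u(b)\ell(a_0k)$, where $d$ corresponds to $v$. A direct $2\times2$ computation of $xy=\pm yx$ forces $d=a_0k$ and $b=a_0k$, after excluding the sign $-$ by trace considerations. This gives $v=w=s^k$.
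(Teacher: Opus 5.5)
Your architecture is the paper's. You use lower root elements $\gamma_i=x_{-\theta}(m_i)$ as conjugators. Injectivity comes from the trivial intersection of the unipotent radicals at the distinct points $p_i=1/m_i$. You reduce (2) to knowing that $y=w_is_j^k$ has no root-of-unity eigenvalue, and after that your $2\times2$ computation is equivalent to the paper's comparison of fixed points. The gap is exactly the step you flagged, and the mechanism you propose for it fails.

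First, for general $K$ the set to avoid is not $\{-1,-2,-3,-4\}$. It is the finite set $\cT_K$ of nonzero numbers $\zeta+\zeta^{-1}-2\in K$ with $\zeta$ a root of unity; for example $\sqrt2-2\in\cT_K$ for $K=\bQ(\sqrt2)$, coming from eighth roots of unity.

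More seriously, largeness of $m_i-m_j$ at a place of $S$ is irrelevant. For $w=x_\theta(r)$ one has $\Tr(w_is_j^k)-2=-r\,a_0k\,(m_i-m_j)^2$. Hitting a prescribed value $f$ is therefore the divisibility condition $f/(a_0(m_i-m_j)^2)\in\Lambda$. As soon as $\cO_S^\times$ is infinite, $\Lambda$ contains elements arbitrarily small at any given place of $S$, so size at one place decides nothing.

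Concretely, let $K=\bQ$ and $S=\{\infty,2\}$, assume $x_{\pm\theta}(\bZ[1/2])\subseteq\Gamma$, and take $s=x_\theta(1)$ and $m_i=2^i$. For $j=i+1$, $k=1$ and $w=x_\theta(4^{-i})$ one gets $\Tr(w_is_j)=2-4^{-i}\cdot 4^{i}=1$, hence $(w_is_j)^6=1$. So the hypothesis of (2) holds with $n=6$ for every $v$, although $w\neq s$. Replacing $2^i$ by $\sum_{l<i}2^{e_l}$ with $e_l$ growing as fast as you like changes nothing, since consecutive differences stay $S$-units.

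The missing idea is non-archimedean: $(m_i-m_j)^2$ has to be small at a place \emph{outside} $S$. The paper fixes a finite place $\mfp\notin S$ such that every element of $\cT_K$ (finitely many nonzero algebraic integers) is a unit at $\mfp$. It also requires every parameter $r$ of $\Upsilon_\theta$ to be $\mfp$-integral. It then takes $s=x_\theta(d)$ and $m_i=qi$ with $q\in d\bZ$ and $v_\mfp(q)>0$. For $w\neq e$ the number $\Tr(w_is_j^k)-2=-r\,dk\,q^2(i-j)^2$ is nonzero with positive $\mfp$-valuation. Hence it is not in $\cT_K$, and no eigenvalue of $w_is_j^k$ is a root of unity.

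This also repairs your Step b, whose general claim is false when $\bfG_\theta\cong\PGL_2$. There, commutation means $xy^nx^{-1}=\pm y^n$. The case $xy^nx^{-1}=-y^n$ does occur with $y^n$ noncentral and $x$ not commuting with $y$ even up to sign: take the eigenvalues of $y$ to be primitive eighth roots of unity, $n=2$, and $x$ swapping the eigenlines. Once root-of-unity eigenvalues are excluded, your trace and $2\times2$ arguments go through. You do need to track that $s_i^k,w_i$ and $v_j,s_j^k$ pick up different scalars under the normalizing conjugation, but this does not affect the conclusion.
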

\begin{proof}
In this proof we make the matrix calculations in $\SL_2(K)$ and apply $\iota$.
Since $\ker\iota\subseteq\{\pm I_2\}$, an equality between the images gives the corresponding equality in $\PGL_2(K)$.
The projective image of a root element determines its parameter.
In particular,
\[x_{\theta}(k) = \iota\begin{pmatrix}1&k\\0&1\end{pmatrix}, \quad x_{-\theta}(k) = \iota\begin{pmatrix}1&0\\k&1\end{pmatrix}, \quad \bfU_{\theta}(K) = \{x_{\theta}(k) \mid k \in K\} \cong K.\]
The groups $\Upsilon_\theta$ and $\bfU_\theta(\cO_S)$ are commensurable, and the same holds for the negative root groups.
Hence we can find $d \in \bN$ such that $x_\theta(d),x_{-\theta}(d)\in\Gamma$.
Set $J=\{t\in K:x_\theta(t)\in\Upsilon_\theta\}$, so that $J$ is commensurable with $\cO_S$.

Consider the set
$\cT_K= \{\zeta+\zeta^{-1}-2\in K\setminus\{0\}: \zeta\text{ is a root of unity}\}$.
Each such root of unity $\zeta$ satisfies a quadratic equation over $K$, so that $[K(\zeta):K]\leq2$ and hence
$[\mathbb Q(\zeta):\mathbb Q]\leq2[K:\mathbb Q]$. Thus the set $\cT_K$ is finite.

Choose a finite place $\mfp\notin S$ such that
$v_{\mfp}(a)=0$ for every $a\in\cT_K$ and $v_{\mfp}(r)\geq0$ for every $r\in J$.
Such a place exists: $\cT_K$ is a finite set of nonzero algebraic integers, and $mJ\subseteq\cO_S$ for some positive integer $m$, so we also take $\mfp\nmid m$.

Choose a positive integer $q\in d\mathbb Z$ with $v_{\mfp}(q)>0$, and set $s = x_{\theta}(d)$ and $\gamma_i=x_{-\theta}(qi)$. Thus if $v = x_{\theta}(t) \in \Upsilon_{\theta}$ then
\[v_i := \gamma_i v \gamma_i^{-1} = \iota(I_2+tA_i), \qquad {\text{where }}
A_i=\begin{pmatrix}-qi&1\\-q^2i^2&qi\end{pmatrix}.\]
Note that $A_i$ are nilpotent of order $2$ and that $(I_2+t_1A_i)(I_2+t_2A_i) = I_2 + (t_1+t_2)A_i$ for all $t_1,t_2 \in J$. We proceed to show that these choices give us the desired properties. 

Note that the nilpotent matrices $A_i,A_j$ have distinct kernels, namely
$K(1,qi)^{\mathsf T}$ and $K(1,qj)^{\mathsf T}$. Therefore the
one-parameter groups $I_2+KA_i$ and $I_2+KA_j$ have trivial
intersection even after passage to $\mathrm{PGL}_2(K)$.
Rearranging an equality of two products proves
(\ref{itm:injectiv}).

For (\ref{itm:commutators}) take 
$v = x_{\theta}(m)$, $w = x_{\theta}(r)$ for some $m,r \in J$ and put
$X = (I_2+dkA_i)(I_2+mA_j) \qquad T=(I_2+rA_i)(I_2+dkA_j)$. The hypothesis gives $[\overline X,\overline T^{\,n}]=1$ in $\PGL_2(K)$. Our goal is to show that $r=m=dk$.
If $r=0$, then $\overline T^{\,n}$ has the unique fixed point $\ker A_j$, whereas $\overline X$ does not fix $\ker A_j$, a contradiction. Hence $r\neq0$.

Since $\Tr(A_iA_j)=-q^2(i-j)^2$,
$\Tr(T)-2=-rkd q^2(i-j)^2\ne0, \qquad v_{\mfp}(\Tr(T)-2)>0$.
It follows that no eigenvalue of $T$ is a root of unity. Otherwise,
writing its eigenvalues as $\zeta,\zeta^{-1}$ would give
$\Tr(T)-2\in\cT_K$, a contradiction.
In particular, its eigenvalues $\lambda,\lambda^{-1}$ are distinct
and $\lambda^2$ has infinite order. For every $n\geq1$, the
centralizer of $\overline{T}^n$ in $\mathrm{PGL}_2(\overline K)$ fixes
each of the two eigenlines of $T$.

Choose a basis of $K^2$ consisting of vectors in the kernels of
$A_i$ and $A_j$. In this basis, for some $a,b\in K^\times$,
$I_2+tA_i=\begin{pmatrix}1&at\\0&1\end{pmatrix}, \qquad I_2+zA_j=\begin{pmatrix}1&0\\bz&1\end{pmatrix}$.
For $t,z\ne0$, the two projective fixed points $\omega_{\pm}$ of their product
are the roots of the polynomial:
$Q(Z) = Z^2-atZ-\frac{at}{bz}=0$.
In particular $\omega_{+} + \omega_{-}=at$ and $\omega_{+}\omega_{-} = -\frac{at}{bz}$. Solving, we see that the unordered pair $\{\omega_{\pm}\}$ determines $t$ and $z$.

In particular, since $\overline X$ commutes with $\overline T^{\,n}$, it fixes both eigenlines of $T$ individually.
If $m=0$, then $X=I_2+dkA_i$ is nontrivial unipotent and has only one fixed point, a contradiction. Thus $m\neq0$.
Moreover, $\overline{X}\ne1$, by the trivial intersection of the two
unipotent groups. Hence $\overline{X}$ has precisely these two fixed points.
Comparing the polynomial $Q(Z)$ for $(t,z)=(kd,m)$
and $(t,z)=(r,kd)$ gives the desired equalities $r=kd$ and $m=kd$.
\end{proof}
As promised, we now return to our general $K$-split lattice $\Gamma$ from Subsection \ref{ksplit} and apply Proposition \ref{prop:mat_conf} to $\Gamma_{\theta} = \bfG_{\theta}(K) \cap \Gamma$. We show that, when $\Gamma_M=\{e\}$, at most one of the conditional expectations $\E_M(\lambda(s_i^k))$ can be nonzero for each fixed $k\neq0$.
\begin{proposition} \label{prop:exclusion}
Let $\Gamma$ be as in Subsection \ref{ksplit} and apply Proposition \ref{prop:mat_conf} to the subgroup $\Gamma_{\theta}$, retaining all the notation there. Let $M \in \SA(L(\Gamma))$ be a boomerang algebra with the property that $\Gamma_{M} = \{\gamma \ | \ \lambda(\gamma) \in M\} =  \trivgp$. Then for every $0 \ne k \in \bZ$ and every $i \ne j \in \bN$ 
either $\E_{M}(\lambda(s_i^k)) = 0$ or $\E_{M}(\lambda(s_j^k)) = 0$.
\end{proposition}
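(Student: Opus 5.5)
Suppose, towards a contradiction, that $a:=\E_M(\lambda(s_i^k))\neq0$ and $b:=\E_M(\lambda(s_j^k))\neq0$ for some $k\neq0$ and $i\neq j$. The plan is to use Lemma~\ref{lem:split-localization} to pin down the Fourier supports of $a$ and $b$, and then look at the product $ab\in M$. Conjugating by $\gamma_i$ and using $\E_{\lambda(\gamma)M\lambda(\gamma)^*}(x)=\lambda(\gamma)\E_M(\lambda(\gamma)^*x\lambda(\gamma))\lambda(\gamma)^*$ (Lemma~\ref{lem:conjugate-expectation}), the conjugate $\lambda(\gamma_i)^*M\lambda(\gamma_i)$ is again boomerang, being a conjugate of a boomerang algebra. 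Lemma~\ref{lem:split-localization} then shows that $a$ is supported on $\gamma_i\Upsilon_\theta\gamma_i^{-1}$, and likewise $b$ is supported on $\gamma_j\Upsilon_\theta\gamma_j^{-1}$. We may therefore write
\[
a=\sum_{u\in\Upsilon_\theta}\alpha_u\lambda(u_i),\qquad b=\sum_{v\in\Upsilon_\theta}\beta_v\lambda(v_j).
\]
Since $\psi_{i,j}$ is injective (Proposition~\ref{prop:mat_conf}(\ref{itm:injectiv})), the product $ab=\sum_{u,v}\alpha_u\beta_v\lambda(u_iv_j)$ has no collisions in its Fourier expansion. Hence its coefficients are exactly $\alpha_u\beta_v$, and $ab\neq0$ with a unique Fourier decomposition.

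Next, for each $k$, consider the element $g=s_i^kv_j$ in the support of $ab$ and invoke Proposition~\ref{prop:support} for $ab$, or more precisely for its analogue for elements of $M$. The key point is that the boomerang property forces the support of any element of $M$ to be recurrent under conjugation by powers of every $t$. Running the argument of Proposition~\ref{prop:support} with $x=ab\in M$ in place of $\E_M(\lambda(s))$ breaks down, because $x$ is not conjugation-fixed. So instead I would apply it to $\E_M(\lambda(s_i^k)\lambda(s_j^k))$, or else use $\E_M(\lambda(s_i^k s_j^k))$ directly. The element $t=s_i^ks_j^k$ has the property that $\E_M(\lambda(t))$ is supported in $\bigcup_n C_\Gamma(t^n)$. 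I would then compare Fourier coefficients: $\tau(ab\,\lambda(t)^*)=\alpha_{s^k}\beta_{s^k}$ relates to $\E_M(\lambda(t))$ via $\langle ab,\lambda(t)\rangle=\langle ab,\E_M(\lambda(t))\rangle$.

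This gives the following strategy. Choose coefficients $(u,v)$ with $\alpha_u\beta_v\neq0$, and note that $\langle ab,\E_M(\lambda(u_iv_j))\rangle=\alpha_u\beta_v\neq0$, so $\E_M(\lambda(u_iv_j))\neq0$. Its support lies in $\bigcup_n C_\Gamma((u_iv_j)^n)$, and it pairs nontrivially with $ab$. Hence some element $w_is_j$-type term in the support of $ab$ commutes with a power of $u_iv_j$. Using symmetric bookkeeping (with the roles played by $s_i^k$ and $s_j^k$ coming from $a$, $b$ having coefficient $1$ at $s^k$, since $\tau(a\lambda(s_i^k)^*)=\|a\|_2^2>0$), Proposition~\ref{prop:mat_conf}(\ref{itm:commutators}) forces the relevant $v=w=s^k$. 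Thus the only surviving term is $s_i^ks_j^k$, and iterating shows that $\E_M(\lambda(s_i^ks_j^k))$ is a nonzero multiple of $\lambda(s_i^ks_j^k)$ plus terms that have been ruled out. This makes $\lambda(s_i^ks_j^k)$, or a power of it, lie in $M$, contradicting $\Gamma_M=\{e\}$.

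The main obstacle is exactly this matching step. One must arrange the roles so that the commutator hypothesis of Proposition~\ref{prop:mat_conf}(\ref{itm:commutators}) appears precisely: an element $s_i^kv_j$ from $\operatorname{supp}(ab)$ commuting with a power of $w_is_j^k$ from $\operatorname{supp}\E_M(\cdot)$. The delicate part is choosing which products (for instance $ab$ versus $ba$, whose supports are $u_iv_j$ versus $v_ju_i$) and which expectation to pair against, in order to extract that $M$ contains a group unitary.
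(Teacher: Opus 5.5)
Your localization of $a$ and $b$ (Lemma~\ref{lem:split-localization} applied to the boomerang conjugates $\lambda(\gamma_i)^*M\lambda(\gamma_i)$ and $\lambda(\gamma_j)^*M\lambda(\gamma_j)$) matches the paper. So does your use of injectivity of $\psi_{i,j}$ to read off the coefficients $\alpha_u\beta_v$ of $ab$.

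The gap is the step that must produce a commutation relation, which you yourself flag as the main obstacle. The version you propose gives no information. From $\langle ab,\E_M(\lambda(u_iv_j))\rangle=\alpha_u\beta_v\neq0$ you only learn that some $u'_iv'_j\in\supp(ab)$ lies in $\supp\E_M(\lambda(u_iv_j))\subseteq\bigcup_{n\geq1}C_\Gamma((u_iv_j)^n)$. But $u'_iv'_j=u_iv_j$ always qualifies, since the coefficient of $\E_M(\lambda(u_iv_j))$ at $u_iv_j$ is $\|\E_M(\lambda(u_iv_j))\|_2^2>0$. Moreover, a relation between two generic elements $u'_iv'_j$ and $u_iv_j$ is not of the form $[s_i^kv_j,(w_is_j^k)^n]=1$ that Proposition~\ref{prop:mat_conf}(\ref{itm:commutators}) requires.

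Your intended endpoint $\E_M(\lambda(s_i^ks_j^k))\in\bC\lambda(s_i^ks_j^k)$ is also unsupported. Proposition~\ref{prop:support} only confines this expectation to $R(s_i^ks_j^k)$, which contains every power of $s_i^ks_j^k$, and nothing you set up localizes it further. Also, the coefficient of $a$ at $s_i^k$ is $\|a\|_2^2$, which is positive but not known to be $1$.

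The missing idea is to use bimodularity asymmetrically, exploiting $a\in M$: $ab=\E_M(a\lambda(s_j^k))=\sum_u\alpha_u\E_M(\lambda(u_is_j^k))$, with $\|\cdot\|_2$-convergence.
\begin{itemize}
\item Suppose $\beta_v\neq0$ for some $v\neq s^k$. The coefficient of $ab$ at $s_i^kv_j$ is $\alpha_{s^k}\beta_v\neq0$.
\item By continuity of Fourier coefficients, $s_i^kv_j\in\supp\E_M(\lambda(u_is_j^k))$ for some $u$ with $\alpha_u\neq0$.
\item Since $u_is_j^k\neq e$ by injectivity, Proposition~\ref{prop:support} gives $n\geq1$ with $[s_i^kv_j,(u_is_j^k)^n]=1$. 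This is exactly the shape in Proposition~\ref{prop:mat_conf}(\ref{itm:commutators}), which forces $v=s^k$, a contradiction.
\item Hence $b=\beta_{s^k}\lambda(s_j^k)$. Applying $\E_M$ gives $\beta_{s^k}=\beta_{s^k}^2$, so $\beta_{s^k}=1$ and $\lambda(s_j^k)\in M$, contradicting $\Gamma_M=\trivgp$.
\end{itemize}
The contradiction comes from the single unitary $\lambda(s_j^k)$, not from $\lambda(s_i^ks_j^k)$.
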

\begin{proof}
Suppose both expectations are nonzero. Applying Lemma~\ref{lem:split-localization} to the boomerang subalgebras $\lambda(\gamma_i)^*M\lambda(\gamma_i)$ and $\lambda(\gamma_j)^*M\lambda(\gamma_j)$, we write
$$A:=\E_M(\lambda(s_i^k))=\sum_{v\in\Upsilon_\theta}c_v\lambda(v_i),\text{ and }
B:=\E_M(\lambda(s_j^k))=\sum_{w\in\Upsilon_\theta}d_w\lambda(w_j).$$
Since $\E_M$ is the orthogonal projection onto $L^2(M)$, we have $c_{s^k}=\|A\|_2^2>0$ and $d_{s^k}=\|B\|_2^2>0$. Also, $c_e=\tau_0(A)=0$ and $d_e=\tau_0(B)=0$.

By the injectivity in Proposition~\ref{prop:mat_conf}(\ref{itm:injectiv}), the Fourier coefficient of $AB$ at $v_iw_j$ is $c_vd_w$. Indeed, this holds for finite partial sums of $A$, and the general statement follows by $L^2$-approximation, since multiplication on the right by $B$ is continuous on $L^2$.
Since $A\in M$, bimodularity gives
$AB=\E_M(A\lambda(s_j^k))
=\sum_{v\in\Upsilon_\theta}c_v\E_M(\lambda(v_is_j^k))$,
where the series converges in $L^2$.
Suppose that $d_w\neq0$ for some $w\neq s^k$. Since $d_e=0$, $w\neq e$. The Fourier coefficient of $AB$ at $s_i^kw_j$ is $c_{s^k}d_w\neq0$. Thus there is some $v\in\Upsilon_\theta$ with $c_v\neq0$ such that $s_i^kw_j\in\supp\E_M(\lambda(v_is_j^k))$.
Since $c_e=0$, $v\neq e$. Also, $v_is_j^k\neq e$ by Proposition~\ref{prop:mat_conf}(\ref{itm:injectiv}). Proposition~\ref{prop:support} therefore gives $n\geq1$ such that $[s_i^kw_j,(v_is_j^k)^n]=1$.
By Proposition~\ref{prop:mat_conf}(\ref{itm:commutators}), $w=s^k$, a contradiction.
Therefore $B=d_{s^k}\lambda(s_j^k)$. Applying $\E_M$ again gives $d_{s^k}=d_{s^k}^2$. Since $d_{s^k}>0$, $d_{s^k}=1$, so $\lambda(s_j^k)\in M$. But $s_j^k\neq e$, contradicting $\Gamma_M=\{e\}$.
\end{proof}
We now combine the IRS classification in Theorem~\ref{thm:NSZ} with Proposition~\ref{prop:exclusion}. In the case where $J_*\mu=\delta_{\{e\}}$, averaging over the conjugates of $s^k$ gives the vanishing required in Proposition~\ref{prop:onedim}.
\begin{proof}[Proof of Theorem~\ref{thm:ksplit}]
Let $\mu$ be an ergodic IRA on $\SA(L(\Gamma))$ and push it forward to an ergodic IRS $\nu = J_*\mu$ on $\Gamma$. 
We may split the proof into two cases by Theorem~\ref{thm:NSZ}.

\medskip
\noindent\emph{Case 1: $\nu=\nu_H$ for a finite-index subgroup $H<\Gamma$.} 
In this case,  $\mu\bigl(\{M:\Gamma_M\in\{\gamma H\gamma^{-1}:\gamma\in\Gamma\}\}\bigr)=1$.  Let
$H_0:=\bigcap_{\gamma\in\Gamma}\gamma H\gamma^{-1}$
be the normal core of $H$; since $[\Gamma:H]<\infty$, $H$ has finitely many conjugates, so $H_0\lhd\Gamma$ and $[\Gamma:H_0]<\infty$.  For $\mu$-a.e.\ $M$ we then have $L(H_0)\subseteq L(\Gamma_M)\subseteq M\subseteq L(\Gamma)$. By Borel density, every finite-index subgroup of $\Gamma$ is Zariski dense in $\bfG$. Hence an element of $\Gamma$ with finite conjugacy class belongs to $Z(\bfG)=\{e\}$, so $\Gamma$ is icc. Since $\Gamma$ is icc and $[\Gamma:H_0]<\infty$, the subgroup $H_0$ is relatively icc in $\Gamma$: for $\gamma\neq e$,
$[H_0:C_{H_0}(\gamma)] \geq\frac{[\Gamma:C_\Gamma(\gamma)]}{[\Gamma:H_0]}=\infty$.
By \cite[Lemma 3.5]{jiang-scm}, every intermediate von Neumann algebra $L(H_0)\subseteq M\subseteq L(\Gamma)$ is of the form $M=L(K)$ for a subgroup $H_0\leq K\leq\Gamma$.  By maximality, $K=\Gamma_M$, so $M=L(\Gamma_M)=L(J(M))$ for $\mu$-a.e.\ $M$, whence
$\mu=(L\circ J)_*\mu=L_*\nu$.

\medskip
\noindent\emph{Case 2: $\nu=\delta_{\{e\}}$.} 
Our goal is to prove that $\mu=\delta_{\mathbb{C}}$. By Proposition~\ref{prop:onedim} it suffices to find $s\in\Gamma$ of infinite order with 
$\E_M(\lambda(s^k))=0$ for all $k\neq0$ and $\mu$-a.e.~ $M$.
Apply Proposition \ref{prop:mat_conf} to the group $\Gamma_{\theta} = \Gamma \cap \bfG_{\theta}(K)$ and $\Upsilon_{\theta} = \Gamma \cap \bfU_{\theta}(K)$. We keep all the notation of that proposition and proceed to show that the element $s \in \Upsilon_{\theta}$ provided there satisfies this vanishing property. For this let us fix $0 \ne k \in \bZ$.

The two assumptions of Proposition \ref{prop:exclusion} hold for $\mu$-almost every $M$: by our standing Case~2 assumption, $\Gamma_M=\{e\}$, and $M$ is boomerang by Poincar\'e recurrence, as noted in the introduction. Hence, by Proposition~\ref{prop:exclusion}, almost surely, at most one of the $N$ terms $\{\E_M(\lambda(s_i^k))\}_{i\leq N}$ is nonzero, and since $\|\E_M(\lambda(s_i^k))\|_2\leq1$,
$\sum_{i=1}^N\|\E_M(\lambda(s_i^k))\|_2^2\leq1$ for $\mu$-a.e.\ $M$. Let $s_i=\gamma_is\gamma_i^{-1}$.
Integrating and using conjugation-invariance of $\mu$ together with $$\|\E_M(\lambda(\gamma)x\lambda(\gamma)^*)\|_2=\|\E_{\lambda(\gamma)^*M\lambda(\gamma)}(x)\|_2,$$ we get that
\begin{align*}
    1\geq\int\sum_{i=1}^N\|\E_M(\lambda(s_i^k))\|_2^2\,d\mu(M)
        &=\sum_{i=1}^N\int\|\E_M(\lambda(s^k))\|_2^2\,d\mu(M)
        \\&=N\int\|\E_M(\lambda(s^k))\|_2^2\,d\mu(M).
\end{align*}
As $N$ is arbitrary, $\int\|\E_M(\lambda(s^k))\|_2^2\,d\mu(M)=0$, i.e.\ $\E_M(\lambda(s^k))=0$ for $\mu$-a.e.\ $M$.  Discarding a countable union of null sets, for $\mu$-a.e.\ $M$ we have $\E_M(\lambda(s^k))=0$ for all $k\geq1$, and taking adjoints, $\E_M(\lambda(s^{-k}))=\E_M(\lambda(s^k)^*)=\E_M(\lambda(s^k))^*=0$ as well; hence $\E_M(\lambda(s^k))=0$ for all $k\neq0$.  Proposition~\ref{prop:onedim} now gives $\mu=\delta_{\mathbb{C}}=L_*\delta_{\{e\}}=L_*\nu$.
\end{proof}

\section{Boomerang subalgebras of acylindrically hyperbolic groups}
\label{sec:boomerang-hyperbolic}
We prove that every boomerang subalgebra of an acylindrically hyperbolic group with trivial amenable radical is a group von Neumann algebra.
We then extend the argument to finite products and deduce the corresponding statement for invariant random algebras.

We use the terminology and notation from
Subsection~\ref{subsec:acylindrically-hyperbolic}. Our first step is to find a generalized loxodromic element on which the conditional expectation is nonzero.
We do this by showing that $K_{\mathrm{lox}}$ is thick for every boomerang character.
\begin{lemma}\label{prop:loxodromic-thickness}
Let $\Gamma$ be acylindrically hyperbolic with trivial amenable radical.
For every Boomerang character $\phi$ on $\Gamma$, the set
$K_{\mathrm{lox}}$ is $\phi$-thick.
\end{lemma}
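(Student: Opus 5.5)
Fix $g\in\Gamma\setminus\{e\}$. We must produce $(h_i)$ with $h_jg^{-1}h_j^{-1}h_igh_i^{-1}\in K_{\mathrm{lox}}$ for $i\neq j$ and $\phi(h_igh_i^{-1})\to\phi(g)$. The boomerang property of $\phi$ gives the second condition along powers of a single element: for any $s\in\Gamma$ there is a strictly increasing $(n_i)$ with $\phi(s^{n_i}gs^{-n_i})\to\phi(g)$. So we take $h_i=s^{n_i}$ for a suitably chosen $s$, and we only need that
\[
h_jg^{-1}h_j^{-1}h_igh_i^{-1}=s^{n_j}g^{-1}s^{n_i-n_j}gs^{-n_i}
\]
is generalized loxodromic for all $i\neq j$. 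This element is conjugate, by $s^{n_j}$, to $g^{-1}s^{m}gs^{-m}$ with $m=n_i-n_j\neq0$. Since $K_{\mathrm{lox}}$ is conjugation invariant, it suffices to find $s$ such that $g^{-1}s^mgs^{-m}=(g^{-1}s^mg)s^{-m}$ lies in $K_{\mathrm{lox}}$ for every $m\neq0$ (or at least for all $|m|$ large, after passing to a sparse subsequence of $(n_i)$).

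Choose $s$ as follows. Fix a non-elementary acylindrical action $\Gamma\curvearrowright S$ on a hyperbolic space. By Subsection~\ref{subsec:acylindrically-hyperbolic}, the intersection $\bigcap_{s\in K_{\mathrm{lox}}}E_\Gamma(s)$ is trivial because the amenable radical is trivial. Hence there is a loxodromic $s$ with $g\notin E_\Gamma(s)$, so $g\{x_s^+,x_s^-\}\neq\{x_s^+,x_s^-\}$. We would like more, namely $g\{x_s^\pm\}\cap\{x_s^\pm\}=\emptyset$. To get it, replace $s$ by a suitable conjugate $usu^{-1}$, or by a product of independent loxodromics. Since the limit set is a perfect minimal set and fixed-point pairs of loxodromics are dense in pairs, one can pick a loxodromic whose fixed points avoid $g^{-1}$ of them (the property $P_{\mathrm{nai}}$ route from \cite{abbott2019property} gives this directly: take $s$ with $\langle s,g\rangle\cong\langle s\rangle*\langle g\rangle$). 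Then $gsg^{-1}$ and $s$ are independent loxodromics. A standard ping-pong argument in the hyperbolic space shows that $(gs^mg^{-1})s^{-m}$ is loxodromic for all $|m|\geq m_0$: its axis is quasi-geodesic, built from long segments along the two disjoint axes. The element is also WPD, since every loxodromic of an acylindrical action is, so it lies in $K_{\mathrm{lox}}$. (With $g^{-1}$ in place of $g$ the argument is the same.)

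Finally, we handle the exponent restriction. Passing to a subsequence of $(n_i)$ with gaps $n_{i+1}-n_i\geq m_0$ ensures $|n_i-n_j|\geq m_0$ for all $i\neq j$, and the convergence $\phi(h_igh_i^{-1})\to\phi(g)$ survives passage to subsequences. Thus $K_{\mathrm{lox}}$ is $\phi$-thick.

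The main obstacle is the uniform loxodromicity claim for $g^{-1}s^mgs^{-m}$ with all large $|m|$ of both signs. This requires a quantitative ping-pong or Gromov-product estimate using the disjointness of the fixed-point pairs of $s$ and $gsg^{-1}$. If working in $S$ is awkward, we would instead use the free product structure $\langle s,g\rangle\cong\langle s\rangle*\langle g\rangle$ and its Bass--Serre/Cayley-tree quasi-isometric image in $S$ to read off loxodromicity from normal forms.
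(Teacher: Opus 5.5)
Your proposal is essentially the paper's proof: a loxodromic $s$ with $g\notin E_\Gamma(s)$, independence of $s$ and $g^{-1}sg$, the fact that $s^{-k}g^{-1}s^{k}g\in K_{\mathrm{lox}}$ for $k\geq N$, and a boomerang sequence with gaps at least $N$. Your ``main obstacle'' is exactly \cite[Lemma~1.2]{abbott2019property} applied to $s^{-1}$ and $g^{-1}sg$, and only one sign of $m$ is needed, since the elements for $(i,j)$ and $(j,i)$ are mutually inverse.

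Your detour to upgrade $g\notin E_\Gamma(s)$ to disjoint fixed-point pairs is unnecessary. A shared boundary point would leave $\langle s,g^{-1}sg\rangle$ with no independent loxodromics, hence virtually cyclic by \cite[Theorem~1.1]{osin2016acylindrically}, so $s$ and $g^{-1}sg$ would have the same fixed-point pair and $g$ would lie in its stabilizer $E_\Gamma(s)$. Note also that $P_{\mathrm{nai}}$ by itself would not give a loxodromic $s$ (e.g.\ $s\in\mathbb Z^2$ inside $\mathbb Z^2*\mathbb Z$).
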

\begin{proof}
Fix a non-elementary acylindrical action $\Gamma\curvearrowright S$
and $g\neq e$. For a loxodromic element $h$, write $E_\Gamma(h)$ for
its maximal virtually cyclic subgroup. Every loxodromic element of
this action is WPD. By \cite[Lemma~5.6]{AMS},
$\bigcap_{h\in K_{\text{lox}}}E_\Gamma(h)$
is the maximal finite normal subgroup of $\Gamma$. This subgroup is
trivial, since $\Gamma$ has trivial amenable radical. Choose a
loxodromic element $h$ with $g\notin E_\Gamma(h)$.

The fixed-point pairs of $h$ and $g^{-1}hg$ are disjoint. Indeed, if
they shared a point, then $\langle h,g^{-1}hg\rangle$ would fix that
point and hence contain no independent loxodromic elements. By
\cite[Theorem~1.1]{osin2016acylindrically}, this subgroup would be
virtually cyclic. The infinite cyclic subgroups generated by $h$ and
$g^{-1}hg$ would then have an infinite intersection, so these elements
would have the same fixed-point pair. Since the stabilizer of the
fixed-point pair of $h$ is $E_\Gamma(h)$, this would give
$g\in E_\Gamma(h)$, a contradiction; see
\cite[Lemma~6.5]{DGO}.

Apply \cite[Lemma~1.2]{abbott2019property} to $h^{-1}$ and $g^{-1}hg$.
There is $N\geq1$ such that
$h^{-k}g^{-1}h^kg =(h^{-1})^k(g^{-1}hg)^k\in K_{\mathrm{lox}}$ for $k\geq N$.
Since $\phi$ is boomerang, there is a strictly increasing sequence
$(n_i)\subseteq\mathbb N_{\geq1}$ such that
$\phi(h^{-n_i}th^{n_i})\longrightarrow\phi(t)$ for every $t\in\Gamma$.
Passing to a subsequence, assume that $n_{i+1}-n_i\geq N$ for every
$i$. Set $h_i=h^{-n_i}$. Then
$\phi(h_igh_i^{-1})\to\phi(g)$. If $j>i$, put $k=n_j-n_i\geq N$.
By the choice of $N$,
$h_jg^{-1}h_j^{-1}h_igh_i^{-1} =h^{-n_i}\bigl(h^{-k}g^{-1}h^kg\bigr)h^{n_i} \in K_{\mathrm{lox}}$.
For $i>j$, the expression is the inverse of the one with the indices
reversed, so it also belongs to $K_{\mathrm{lox}}$. Thus
$K_{\mathrm{lox}}$ is $\phi$-thick.
\end{proof}
Applying this to the associated boomerang character $\phi_{\mathcal M}$ gives the following consequence.
\begin{corollary}\label{cor:boomerang-loxodromic-vanishing}
Let $\Gamma$ be acylindrically hyperbolic with trivial amenable radical,
and let $\mathcal M\leq L(\Gamma)$ be boomerang. If
$\mathbb E_{\mathcal M}(\lambda(s))=0$ for every
$s\in K_{\mathrm{lox}}$, then $\mathcal M=\mathbb C1$.
\end{corollary}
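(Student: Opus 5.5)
The plan is to feed the boomerang character $\phi_{\mathcal M}$ into the thickness--vanishing principle. By Lemma~\ref{lem:boom_char}, since $\mathcal M$ is boomerang, the normalized positive-definite function
\[
\phi_{\mathcal M}(g)=\|\E_{\mathcal M}(\lambda(g))\|_2^2
\]
is a boomerang character on $\Gamma$. Lemma~\ref{prop:loxodromic-thickness} then says that $K_{\mathrm{lox}}$ is $\phi_{\mathcal M}$-thick. The hypothesis $\E_{\mathcal M}(\lambda(s))=0$ for every $s\in K_{\mathrm{lox}}$ says exactly that $\phi_{\mathcal M}$ vanishes on $K_{\mathrm{lox}}$.

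Applying Lemma~\ref{lem:thickness-vanishing} with $\phi=\phi_{\mathcal M}$ and $K=K_{\mathrm{lox}}$ gives $\phi_{\mathcal M}=\delta_e$. In other words, $\E_{\mathcal M}(\lambda(g))=0$ for every $g\neq e$.

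It remains to pass from this to $\mathcal M=\mathbb C1$, by the same Fourier argument that ends the proof of Proposition~\ref{prop:onedim}. Take $x\in\mathcal M$ and $g\neq e$. Using that $\E_{\mathcal M}$ is $\tau$-preserving and $\mathcal M$-bimodular, and that $\E_{\mathcal M}(\lambda(g)^*)=\E_{\mathcal M}(\lambda(g))^*=0$,
\[
\tau_0(x\lambda(g)^*)=\tau_0\bigl(\E_{\mathcal M}(x\lambda(g)^*)\bigr)=\tau_0\bigl(x\,\E_{\mathcal M}(\lambda(g)^*)\bigr)=0 .
\]
Thus every Fourier coefficient of $x$ away from $e$ vanishes, so $x=\tau_0(x)1$ and $\mathcal M=\mathbb C1$.

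None of these steps is a real obstacle; all the substance sits in Lemma~\ref{prop:loxodromic-thickness}. The only point to check is that the definition of $\phi$-thickness matches the boomerang recurrence used there, and the quantifiers of that lemma already give this.
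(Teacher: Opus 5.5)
Your proposal is correct and is essentially the paper's own proof: it obtains $\phi_{\mathcal M}=\delta_e$ from Lemma~\ref{lem:boom_char}, Lemma~\ref{prop:loxodromic-thickness} and Lemma~\ref{lem:thickness-vanishing}, and then concludes $\mathcal M=\mathbb C1$ from $\tau_0(x\lambda(g)^*)=\tau_0\bigl(x\,\mathbb E_{\mathcal M}(\lambda(g))^*\bigr)=0$ for $x\in\mathcal M$ and $g\neq e$. Your bimodularity computation is just a slightly more explicit version of that last step.
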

\begin{proof}
By Lemma~\ref{lem:boom_char}, $\phi_{\mathcal M}$ is boomerang.
Apply Lemmas~\ref{prop:loxodromic-thickness} and~\ref{lem:thickness-vanishing} to obtain $\phi_{\mathcal M}=\delta_e$.
Thus $\mathbb E_{\mathcal M}(\lambda(g))=0$ for every $g\neq e$.
For $x\in\mathcal M$ and $g\neq e$,
$\tau_0(x\lambda(g)^*)=
\tau_0(x\mathbb E_{\mathcal M}(\lambda(g))^*)=0$.
Hence $x=\tau_0(x)1$.
\end{proof}
We now show that the conditional expectation of a generalized loxodromic element is either zero or the element itself.
We first locate its Fourier support using Proposition~\ref{prop:support}.
For every $s\in K_{\mathrm{lox}}$, the subgroup $E_\Gamma(s)$ is
virtually cyclic and hyperbolically embedded in $\Gamma$ by
\cite[Corollary~2.9]{DGO}. Moreover,
$C_\Gamma(s^k)\subseteq E_\Gamma(s)$ for every $k\neq0$.
Consequently, Proposition~\ref{prop:support} gives
$\mathbb E_{\mathcal M}(\lambda(s))\in L(E_\Gamma(s))$.
In particular, when $\Gamma$ is torsion-free, $E_\Gamma(s)=\langle a\rangle$
for a primitive element $a$ with $s\in\langle a\rangle$.

The following proof adapts Jiang's Fourier coefficient comparison argument \cite{jiang-scm}, as used in \cite[Section~2 and Proposition~2.2]{amrutam2023invariant}, to boomerang subalgebras; see also \cite{jiang-skalski2021}.
\begin{proposition}\label{thm:boompositiongroupvna}
Let $\Gamma$ be acylindrically hyperbolic with trivial amenable radical,
and let $\mathcal M\leq L(\Gamma)$ be boomerang. For every
$s\in K_{\mathrm{lox}}$,
$\mathbb E_{\mathcal M}(\lambda(s))\in\{0,\lambda(s)\}$.
\end{proposition}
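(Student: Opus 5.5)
Throughout, write $x=\mathbb E_{\mathcal M}(\lambda(s))=\sum_{h\in E_\Gamma(s)}c_h\lambda(h)$. By the discussion preceding the statement, the support of $x$ lies in $E_\Gamma(s)$. We have $c_e=\tau_0(x)=0$, and $c_s=\|x\|_2^2$ because $\mathbb E_{\mathcal M}$ is an orthogonal projection. The plan is to imitate the proof of Proposition~\ref{prop:exclusion}, with the higher rank matrix computations replaced by hyperbolic geometry. Assume $x\neq0$. The target is to show that $c_h=0$ for every $h\neq s$. Then $x=c_s\lambda(s)$, applying $\mathbb E_{\mathcal M}$ once more gives $c_s=c_s^2$, and since $c_s>0$ we get $c_s=1$ and $x=\lambda(s)$.

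\emph{Step 1: an auxiliary conjugate.} Since $\Gamma$ has trivial amenable radical, \cite[Lemma~5.6]{AMS} lets us choose a generalized loxodromic $t$, for the same acylindrical action in which $s$ is loxodromic, with $t\notin E_\Gamma(s)$. Then, as in the proof of Lemma~\ref{prop:loxodromic-thickness}, the fixed pairs of $s$ and $t$ are disjoint. By Lemma~\ref{notzerounderconditionalexpec} there are infinitely many $n$ such that
\[
y=\mathbb E_{\mathcal M}(\lambda(s'))\neq0,\qquad s'=t^{-n}st^{n}.
\]
Write $y=\sum d_{w}\lambda(w)$, supported in $E_\Gamma(s')=t^{-n}E_\Gamma(s)t^n$. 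Again $d_e=0$ and $d_{s'}=\|y\|_2^2>0$. For $n$ large, the fixed pair of $s'$ lies near the repelling point of $t$. In particular it is far from the fixed pair of $s$, and the whole configuration is in ping-pong position.

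\emph{Step 2: an injective product and Fourier comparison.} I will first show that for large $n$ the product map $E_\Gamma(s)\times E_\Gamma(s')\to\Gamma$, $(h,w)\mapsto hw$, is injective. This amounts to showing $E_\Gamma(s)\cap E_\Gamma(s')=\{e\}$. The intersection is finite, being a stabilizer of two disjoint pairs of boundary points in an acylindrical action. To make it trivial I would vary $n$ and $t$, and use that the finite subgroups of $E_\Gamma(s)$ meeting conjugates nontrivially for infinitely many $n$ would generate a finite subgroup normalized by enough elements to be normal, hence trivial. Given injectivity, the Fourier coefficient of $xy$ at $hw$ equals $c_hd_w$, by $L^2$-approximation exactly as in Proposition~\ref{prop:exclusion}. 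Bimodularity gives
\[
xy=\mathbb E_{\mathcal M}\bigl(\lambda(s)y\bigr)=\sum_{w}d_w\,\mathbb E_{\mathcal M}\bigl(\lambda(sw)\bigr).
\]
Now suppose $c_h\neq0$ for some $h\neq s$. The coefficient of $xy$ at $hs'$ is $c_hd_{s'}\neq0$. Hence for some $w$ with $d_w\neq0$ (so $w\neq e$), the element $hs'$ lies in $\operatorname{supp}\mathbb E_{\mathcal M}(\lambda(sw))$. By Proposition~\ref{prop:support}, the element $hs'$ commutes with some power $(sw)^k$ with $k\geq1$.

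\emph{Step 3: the geometric contradiction (main obstacle).} For $h\in E_\Gamma(s)\setminus\{e\}$ and $w\in E_\Gamma(s')\setminus\{e\}$ in ping-pong position, I expect $sw$ and $hs'$ to be loxodromic, by a ping-pong or \cite[Lemma~1.2]{abbott2019property} type argument applied to the large powers available for large $n$. Their fixed points are then determined by the pair of factors: the attracting point of $sw$ lies near the $s$-side and the repelling point near the $s'$-side, and these points move as the factors change. Commuting with a power of the loxodromic element $sw$ forces $hs'\in E_\Gamma(sw)$. In particular $hs'$ and $sw$ must share their fixed pair, since $hs'$ is loxodromic. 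The task is to prove that this forces $h=s$ and $w=s'$, which contradicts $h\neq s$. This is the analogue of the fixed-point computation with $Q(Z)$ in Proposition~\ref{prop:mat_conf}, and it is the delicate part. Elements of $E_\Gamma(s)$ that are torsion, or that act as a flip on the fixed pair, have to be handled separately; a flip can be ruled out because it would make $hs'$ non-loxodromic or reverse the orientation. If this direct fixed-point argument becomes unmanageable, I would fall back to a quasi-geodesic or normal-form argument in the hyperbolically embedded family $\{E_\Gamma(s),E_\Gamma(s')\}$, which are relatively free for large $n$. That fallback would replace the geometric comparison by a combinatorial one.
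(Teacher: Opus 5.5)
Your skeleton matches the paper's: conjugate $s$ by a power of an auxiliary $t$, use Lemma~\ref{notzerounderconditionalexpec}, compare Fourier coefficients via bimodularity, and turn a surviving coefficient into a commutation relation via Proposition~\ref{prop:support}. The gap is in the step that should produce the contradiction. It is never proved, and with your choice of $t$ it is false in general.

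You only require $t$ to be generalized loxodromic with $t\notin E_\Gamma(s)$. Then $E_\Gamma(s)\cap t^{-n}E_\Gamma(s)t^n$ can contain a fixed nontrivial torsion element for every $n$. For example, take $\Gamma=(\mathbb Z/2\times F_2)*\mathbb Z$, which is hyperbolic with trivial amenable radical. Let $s,t$ be the free generators of $F_2$ and $u$ the generator of $\mathbb Z/2$. Then $E_\Gamma(s)=\langle s\rangle\times\langle u\rangle$, and $u=t^{-n}ut^{n}\in E_\Gamma(s')$ for all $n$. So the product map in Step~2 is never injective. Your proposed repair also fails, because $\langle u\rangle$ is centralized by $t$ without being normal in $\Gamma$. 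Worse, the target of Step~3 is false here. Take $h=su\in E_\Gamma(s)\setminus\{e,s\}$ and $w=us'\in E_\Gamma(s')\setminus\{e\}$. Then $hs'=sus'=sw$, which commutes with $sw$. So no fixed-point or ping-pong comparison can force $h=s$, however large $n$ is. The obstruction is the choice of $t$, not the geometry.

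The missing ingredient is the one the paper uses. Since $A=E_\Gamma(s)$ is hyperbolically embedded and $\Gamma$ has no nontrivial finite normal subgroup, the proof of \cite[Theorem~1]{abbott2019property} gives an infinite-order $t$ with $\langle A,t\rangle\cong A*\langle t\rangle$. Then for every $n\geq1$, $A$ and $B=t^{-n}At^n=E_\Gamma(s')$ generate $A*B$. This holds in particular for the $n$ supplied by Lemma~\ref{notzerounderconditionalexpec}, with no largeness needed. To see it, rewrite each $B$-syllable as $t^{-n}at^{n}$; the word stays reduced in $A*\langle t\rangle$. This gives $A\cap B=\{e\}$ and injectivity of $(h,w)\mapsto hw$ at once.

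Step~3 then becomes a one-line normal-form argument. The words $(hs')(sw)^k$ and $(sw)^k(hs')$ are reduced in $A*B$, with first syllables $h$ and $s$, so $h=s$. The paper compares coefficients at $xy$ for $y\in B\setminus\{e\}$ with $d_y\neq0$, rather than at $hs'$. Once the free product is available, your choice of the coefficient at $hs'$ works equally well. Your fallback of a normal-form argument for a ``relatively free'' pair points the right way. But relative freeness does not come from taking $n$ large; it has to be built into the choice of $t$.
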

\begin{proof}
Suppose that $\mathbb E_{\mathcal M}(\lambda(s))\neq0$, and put
$A=E_\Gamma(s)$. The subgroup $A$ is hyperbolically embedded in
$\Gamma$, and $\Gamma$ has no nontrivial finite normal subgroup.
By the proof of \cite[Theorem~1]{abbott2019property}, choose an infinite-order
element $t\in\Gamma$ such that
$\langle A,t\rangle\cong A*\langle t\rangle$, under the canonical
homomorphism.

By Lemma~\ref{notzerounderconditionalexpec}, choose $n\geq1$ such that
$\mathbb E_{\mathcal M}(\lambda(t^{-n}st^n))\neq0$.
Set $b=t^{-n}st^n$ and $B=t^{-n}At^n$. Conjugating the unique
maximal virtually cyclic subgroup containing $s$ gives
$B=E_\Gamma(b)$.

We claim that $\langle A,B\rangle\cong A*B$ under the canonical
homomorphism. Indeed, take a nonempty reduced word with alternating
nonidentity syllables from $A$ and $B$. Write every $B$-syllable
as $t^{-n}at^n$, where $a\in A\setminus\{e\}$. After these
substitutions, the resulting word is reduced in $A*\langle t\rangle$:
every two successive powers $t^n$ and $t^{-n}$ are separated by a
nonidentity $A$-syllable. Thus the original word is nontrivial.
In particular, $A\cap B=\{e\}$.

Since $b$ is conjugate to $s$, it also belongs to $K_{\mathrm{lox}}$.
By the support inclusion above, write
$\mathbb E_{\mathcal M}(\lambda(s))=\sum_{x\in A}c_x\lambda(x)$, and $\mathbb E_{\mathcal M}(\lambda(b))=\sum_{y\in B}d_y\lambda(y)$. Trace preservation gives
$c_e=d_e=0$, since $s,b\neq e$. Bimodularity gives
\begin{equation}\label{eq:boomerang-fourier-comparison}
\begin{aligned}
 \sum_{x\in A,\,y\in B}c_xd_y\lambda(xy)
 &=\mathbb E_{\mathcal M}(\lambda(s))
       \mathbb E_{\mathcal M}(\lambda(b))\\
 &=\mathbb E_{\mathcal M}\left(
      \lambda(s)\mathbb E_{\mathcal M}(\lambda(b))\right)\\
 &=\sum_{y'\in B}d_{y'}\mathbb E_{\mathcal M}(\lambda(sy')).
\end{aligned}
\end{equation}
Fix $x\in A\setminus\{e,s\}$ and $y\in B\setminus\{e\}$.
We show that the Fourier coefficient at $xy$ of every term on the
right of \eqref{eq:boomerang-fourier-comparison} is zero.
The term $y'=e$ vanishes because $d_e=0$.
For $y'\in B\setminus\{e\}$, the word $sy'$ is reduced and
nontrivial. If
$xy\in\operatorname{supp}
 (\mathbb E_{\mathcal M}(\lambda(sy')))$, then
Proposition~\ref{prop:support} gives $k\geq1$ such that
$(xy)(sy')^k=(sy')^k(xy)$.
Both sides are reduced words in $A*B$, since $x,s$ are nonidentity
elements of $A$ and $y,y'$ are nonidentity elements of $B$.
The first syllable on the left is $x$, whereas the first syllable
on the right is $s$. Uniqueness of the reduced normal form therefore
gives $x=s$, a contradiction.

Fourier coefficients are $\|\cdot\|_2$-continuous, so the coefficient
at $xy$ of the right-hand series is zero. Comparing coefficients in
\eqref{eq:boomerang-fourier-comparison} gives $c_xd_y=0$ whenever
$x\in A\setminus\{e,s\}$ and $y\in B\setminus\{e\}$.
Since $\mathbb E_{\mathcal M}(\lambda(b))\neq0$ and $d_e=0$,
there is $y\in B\setminus\{e\}$ with $d_y\neq0$.
Hence $c_x=0$ for every $x\in A\setminus\{e,s\}$.
Together with $c_e=0$, this gives
$\mathbb E_{\mathcal M}(\lambda(s))=c_s\lambda(s)$.
Applying $\mathbb E_{\mathcal M}$ once more yields
$c_s\lambda(s)=c_s^2\lambda(s)$.
Since the expectation is nonzero, $c_s\neq0$, and therefore $c_s=1$.
\end{proof}
We now pass from generalized loxodromic elements to arbitrary group elements.
The preceding proposition and bimodularity will show that the image of a group element under the canonical conditional expectation is either zero or that unitary. 
\begin{theorem}\label{boomerangfromsubgroup}
Let $\Gamma$ be a countable acylindrically hyperbolic group with a trivial
amenable radical. Every boomerang von Neumann subalgebra
$\mathcal M\leq L(\Gamma)$ is $L(H)$ for a unique boomerang subgroup
$H\leq\Gamma$.
\end{theorem}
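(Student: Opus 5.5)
The plan is to show that $\mathcal M=L(H)$ with $H=\Gamma_{\mathcal M}=J(\mathcal M)$. Uniqueness is immediate since $JL(H)=H$, and $H$ is then boomerang by Proposition~\ref{prop:boomerang-group-algebra}. Since $L(\Gamma_{\mathcal M})\leq\mathcal M$ always holds, it suffices to prove that for every $g\in\Gamma$ one has $\E_{\mathcal M}(\lambda(g))\in\{0,\lambda(g)\}$. Granting this, for $x\in\mathcal M$ the Fourier coefficient $\tau_0(x\lambda(g)^*)=\tau_0(x\E_{\mathcal M}(\lambda(g))^*)$ vanishes unless $\lambda(g)\in\mathcal M$. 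Hence $\supp(x)\subseteq H$ and $x\in L(H)$.

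To pass from $K_{\mathrm{lox}}$ (where Proposition~\ref{thm:boompositiongroupvna} gives the dichotomy) to arbitrary $g$, I would use bimodularity together with a product decomposition. Fix $g\neq e$ with $\E_{\mathcal M}(\lambda(g))\neq0$. The goal is to find $s\in K_{\mathrm{lox}}$ with $\lambda(s)\in\mathcal M$ such that $gs\in K_{\mathrm{lox}}$ and $\E_{\mathcal M}(\lambda(gs))\neq0$. Then Proposition~\ref{thm:boompositiongroupvna} forces $\lambda(gs)\in\mathcal M$, so $\lambda(g)=\lambda(gs)\lambda(s)^*\in\mathcal M$. Note that $\E_{\mathcal M}(\lambda(gs))=\E_{\mathcal M}(\lambda(g))\lambda(s)\neq0$ by bimodularity whenever $\lambda(s)\in\mathcal M$, so the nonvanishing is automatic.

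It therefore remains to produce such an $s$. First, if $\mathcal M\neq\bC1$, Corollary~\ref{cor:boomerang-loxodromic-vanishing} provides $s_0\in K_{\mathrm{lox}}$ with $\E_{\mathcal M}(\lambda(s_0))\neq0$, hence $\lambda(s_0)\in\mathcal M$ by Proposition~\ref{thm:boompositiongroupvna}. (If $\mathcal M=\bC1$ there is nothing to prove.) Thus $H$ contains a generalized loxodromic element, and all powers $s_0^m$ lie in $H$. I would then use the standard fact that for a loxodromic WPD $s_0$ and $g\notin E_\Gamma(s_0)$, the product $gs_0^m$ is loxodromic for all large $m$; this is a ping-pong or north--south dynamics argument in the acylindrical action. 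This handles $g\notin E_\Gamma(s_0)$.

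The main obstacle is the case $g\in E_\Gamma(s_0)$, and also ensuring enough loxodromics inside $H$. My first attempt would be to enlarge the supply of loxodromics in $H$. By Lemma~\ref{notzerounderconditionalexpec} applied to $\lambda(s_0)$ and an element $t$ with $\langle E_\Gamma(s_0),t\rangle$ free (as in the proof of Proposition~\ref{thm:boompositiongroupvna}), some conjugate $s_1=t^{-n}s_0t^n$ has $\E_{\mathcal M}(\lambda(s_1))\neq0$, and hence $s_1\in H$. Then $E_\Gamma(s_0)\cap E_\Gamma(s_1)=\{e\}$, so every $g\neq e$ lies outside one of them, and the previous paragraph applies with $s_0$ or $s_1$. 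If the ping-pong fact needs care, I would instead invoke \cite[Lemma~1.2]{abbott2019property}, as in Lemma~\ref{prop:loxodromic-thickness}, to get $gs^m\in K_{\mathrm{lox}}$ for large $m$ when $g$ and $s$ have suitably independent behaviour.
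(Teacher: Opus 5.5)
Your proposal is correct and follows essentially the same route as the paper. You obtain $s$ from Corollary~\ref{cor:boomerang-loxodromic-vanishing} and Proposition~\ref{thm:boompositiongroupvna}, pass to the conjugate $t^{-n}st^{n}$ with $E_\Gamma(s)\cap E_\Gamma(t^{-n}st^{n})=\{e\}$, choose for each $g\neq e$ the one of the two that $g$ avoids, and finish by bimodularity. The only point to tighten is the source of the fact that some $gr^k\in K_{\mathrm{lox}}$: the paper takes it from \cite[Corollary~6.12]{DGO}, whereas your fallback \cite[Lemma~1.2]{abbott2019property} concerns products of powers of two independent loxodromics and does not directly cover an arbitrary $g$.
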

\begin{proof}
The case $\mathcal M=\mathbb C1$ is immediate. Otherwise,
Corollary~\ref{cor:boomerang-loxodromic-vanishing} gives
$s\in K_{\mathrm{lox}}$ with
$\mathbb E_{\mathcal M}(\lambda(s))\neq0$.
Proposition~\ref{thm:boompositiongroupvna} gives $\lambda(s)\in\mathcal M$.
Choose $t$ and $n\geq1$ as in its proof and set $b=t^{-n}st^n$.
Thus $b\in K_{\mathrm{lox}}$ and
$\mathbb E_{\mathcal M}(\lambda(b))\neq0$. Applying the proposition
to $b$ gives $\lambda(b)\in\mathcal M$. The same proof gives
$\langle E_\Gamma(s),E_\Gamma(b)\rangle
\cong E_\Gamma(s)*E_\Gamma(b)$, so
$E_\Gamma(s)\cap E_\Gamma(b)=\{e\}$.

Fix $g\neq e$. Since $E_\Gamma(s)\cap E_\Gamma(b)=\{e\}$,
$g$ cannot belong to both subgroups. Choose $r\in\{s,b\}$ such
that $g\notin E_\Gamma(r)$. The virtually cyclic subgroup
$E_\Gamma(r)$ is infinite, finitely generated and hyperbolically
embedded in $\Gamma$. Choose a relative generating set $Y$ for
which it is hyperbolically embedded. Since $r$ has infinite order,
the last assertion of \cite[Corollary~6.12]{DGO} gives
$k\in\mathbb Z$ such that $gr^k$ is loxodromic WPD for the action
on the relative Cayley graph with generating set
$Y\sqcup E_\Gamma(r)$. Thus $gr^k\in K_{\mathrm{lox}}$ by
\cite[Theorem~1.4]{osin2016acylindrically}.

Since $\lambda(r)\in\mathcal M$, we have
$\lambda(r^k)\in\mathcal M$. Bimodularity and
Proposition~\ref{thm:boompositiongroupvna} therefore give
$\mathbb E_{\mathcal M}(\lambda(g))\lambda(r^k) =\mathbb E_{\mathcal M}(\lambda(gr^k)) \in\{0,\lambda(gr^k)\}$.
Thus $\mathbb E_{\mathcal M}(\lambda(g))\in\{0,\lambda(g)\}$
for every $g\in\Gamma$. Set
$H=\{g\in\Gamma:\lambda(g)\in\mathcal M\}$.
This is a subgroup and $L(H)\subseteq\mathcal M$.
If $x\in\mathcal M$ and $g\notin H$, then
$\tau_0(x\lambda(g)^*)=
\tau_0(x\mathbb E_{\mathcal M}(\lambda(g))^*)=0$.
Thus all Fourier coefficients of $x$ outside $H$ vanish, so
$\mathbb E_{L(H)}(x)=x$. Hence $x\in L(H)$, proving
$\mathcal M\subseteq L(H)$ and therefore $\mathcal M=L(H)$. Finally, Proposition~\ref{prop:boomerang-group-algebra} shows that
$H$ is boomerang.
\end{proof}
We now extend the argument to finite products.
We follow \cite[Theorem~5.4 and Remark~5.5]{AJ:cstar-isr}, using the preceding thickness and Fourier support arguments in each coordinate.
By recurrence, it suffices to prove the corresponding statement for boomerang subalgebras.
\setcounter{claim}{0}
\begin{theorem}\label{thm:boomerang-products-ah}
Let $n\geq2$ and let $\Gamma_i$ be countable acylindrically hyperbolic groups with trivial amenable radical.
Set $\Gamma=\oplus_{1\leq i\leq n}\Gamma_i$.
Then every boomerang von Neumann subalgebra $\mathcal M\leq L(\Gamma)$ is $L(H)$ for a unique boomerang subgroup $H\leq\Gamma$.
\end{theorem}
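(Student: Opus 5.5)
The plan is to prove that for every $g\in\Gamma$ one has $\mathbb E_{\mathcal M}(\lambda(g))\in\{0,\lambda(g)\}$. Once this holds, the final paragraph of the proof of Theorem~\ref{boomerangfromsubgroup} applies verbatim. Setting $H=\Gamma_{\mathcal M}$, every $x\in\mathcal M$ has vanishing Fourier coefficients off $H$, so $\mathcal M=L(H)$. Proposition~\ref{prop:boomerang-group-algebra} then shows that $H$ is boomerang, and uniqueness follows from $JL(H)=H$. To reach the dichotomy, I will work coordinatewise: identify $\Gamma_i$ with its copy in $\Gamma$, and let $K_i\subseteq\Gamma_i$ be the set of generalized loxodromics of $\Gamma_i$.

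\textbf{Step 1 (loxodromics in one coordinate).} Let $s\in K_i$. Its centralizer in $\Gamma$ is $C_{\Gamma_i}(s^k)\times\prod_{j\neq i}\Gamma_j$, so $R(s)\subseteq E_{\Gamma_i}(s)\times\prod_{j\neq i}\Gamma_j$. This bound is too weak, so I refine it using $R(s)=\bigcap_{t\in C_\Gamma(s)}\bigcup_k C_\Gamma(t^k)$. Taking $t$ ranging over $s\cdot t'$ with $t'\in\prod_{j\ne i}\Gamma_j$, the $j$-th coordinate of an element of $R(s)$ must commute with a power of every $t'_j\in\Gamma_j$. In an acylindrically hyperbolic group with trivial amenable radical, such an element is trivial: pick two loxodromics with $E$-subgroups intersecting trivially, and use $C(t^k)\subseteq E(t)$. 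Hence Proposition~\ref{prop:support} gives $\mathbb E_{\mathcal M}(\lambda(s))\in L(E_{\Gamma_i}(s))$.

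Next I will rerun Proposition~\ref{thm:boompositiongroupvna} inside $\Gamma_i$. Choose $t\in\Gamma_i$ with $\langle A,t\rangle\cong A*\langle t\rangle$, and use Lemma~\ref{notzerounderconditionalexpec} with this $t\in\Gamma$. In the normal-form argument, the commutation $(xy)(sy')^k=(sy')^k(xy)$ now takes place inside $\Gamma_i$, since all elements involved lie in $\Gamma_i$. The conclusion is $\mathbb E_{\mathcal M}(\lambda(s))\in\{0,\lambda(s)\}$ for $s\in K_i$.

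\textbf{Step 2 (nontriviality in each coordinate).} This step is the main obstacle. I need to know when $\mathcal M\cap L(\Gamma_i)$ contains loxodromic unitaries, and also to handle elements $g=(g_1,\dots,g_n)$ with several nontrivial coordinates. For each $i$, consider the restriction $\phi_{\mathcal M}|_{\Gamma_i}$, which is a boomerang character on $\Gamma_i$, since recurrence along powers of $s\in\Gamma_i$ restricts. By Lemma~\ref{prop:loxodromic-thickness} the set $K_i$ is thick for it, so Lemma~\ref{lem:thickness-vanishing} gives a dichotomy. Either $\mathbb E_{\mathcal M}(\lambda(g))=0$ for all $g\in\Gamma_i\setminus\{e\}$, or some $s_i\in K_i$ satisfies $\lambda(s_i)\in\mathcal M$. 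In the latter case, the second half of the proof of Theorem~\ref{boomerangfromsubgroup} yields $\lambda(b_i)\in\mathcal M$ with $E(s_i)\cap E(b_i)=\{e\}$.

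Let $I$ be the set of indices where this latter case occurs. For $g$ with a nontrivial coordinate $g_i$, $i\in I$, I will multiply by powers $r_i^{k}$ with $r_i\in\{s_i,b_i\}$, as in Theorem~\ref{boomerangfromsubgroup}, to make $g_ir_i^k$ loxodromic. This reduces the problem to elements whose coordinates in $I$ are loxodromic or trivial.

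The hard part is the mixed elements and the coordinates outside $I$. For these I plan a product thickness argument in the spirit of \cite[Theorem~5.4]{AJ:cstar-isr}. Fix $g$ with some $g_j\neq e$, $j\notin I$. Conjugate by powers $h^{-n_i}$ of a loxodromic $h\in\Gamma_j$ chosen as in Lemma~\ref{prop:loxodromic-thickness}, along a recurrence sequence for $\phi_{\mathcal M}$. The differences $h_jg^{-1}h_j^{-1}h_igh_i^{-1}$ then lie in $\Gamma_j$ and are loxodromic there, hence have vanishing $\phi_{\mathcal M}$ because $j\notin I$. Lemma~\ref{lem:thickness-vanishing} then gives $\mathbb E_{\mathcal M}(\lambda(g))=0$.

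Finally, suppose all coordinates of $g$ outside $I$ are trivial. I will show $\mathbb E_{\mathcal M}(\lambda(g))\in\{0,\lambda(g)\}$ by induction on the number of nontrivial coordinates. For the support, use Proposition~\ref{prop:support} coordinatewise with $t=r_i^k$. For the dichotomy, use the bimodularity identity $\mathbb E_{\mathcal M}(\lambda(g))\lambda(r)=\mathbb E_{\mathcal M}(\lambda(gr))$, where $r=\prod_{i}r_i^{k_i}\in\Gamma_{\mathcal M}$. This reduces to $g$ with every coordinate loxodromic, and for such $g$ the Fourier comparison of Step 1 is run in each coordinate simultaneously.
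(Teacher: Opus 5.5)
Your proposal is correct and is essentially the paper's proof. It uses the same support computation via $R(s)$, which yields preservation of the coordinate subalgebras, and the same per-coordinate thickness argument. It also relies on the simultaneous free-product Fourier comparison for elements of $K^J_{\mathrm{lox}}$ (the paper's Claim~2); note that there the coordinate preservation is what kills the coefficients $c_x,d_y$ having a trivial coordinate, a detail your sketch leaves implicit. The remaining ingredients, the Bessel argument off $I$ (Claim~4) and the final bimodularity reduction, are likewise the paper's.

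The only difference is organizational. You define $I=\{i:\phi_{\mathcal M}|_{\Gamma_i}\neq\delta_e\}$ directly by restricting the boomerang character to each factor. This lets you bypass the paper's Claims~1 and~3, namely the thickness of $\bigcup_I K^I_{\mathrm{lox}}$ and the extraction of single-coordinate loxodromic unitaries from a multi-coordinate one.
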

\begin{proof}
Write $[n]=\{1,2,\ldots,n\}$.
For $i\in[n]$, write $K^i_{\mathrm{lox}}$ for the generalized loxodromic elements of $\Gamma_i$.
For $\emptyset\neq I\subseteq[n]$, set
$K^I_{\mathrm{lox}}=\{(s_i)\in\Gamma:s_i\in K^i_{\mathrm{lox}}\text{ for }i\in I,\ s_i=e\text{ for }i\notin I\}$.
The case $\mathcal M=\mathbb C1$ is immediate.
From now on, assume that $\mathcal M\neq\mathbb C1$.
Set $\phi_{\mathcal M}(s)=\tau_0(\mathbb E_{\mathcal M}(\lambda(s))\lambda(s)^*)$.
By Lemma~\ref{lem:boom_char}, $\phi_{\mathcal M}$ is a boomerang character.
We first observe that, for every $J\subseteq[n]$,
$\mathbb E_{\mathcal M}\left(L\left(\oplus_{i\in J}\Gamma_i\right)\right) \subseteq L\left(\oplus_{i\in J}\Gamma_i\right)$.
Indeed, for every $i$, \cite[Lemma~5.6]{AMS} gives
\[\bigcap_{t\in\Gamma_i}\bigcup_{k\geq1}C_{\Gamma_i}(t^k) \subseteq\bigcap_{h\in K_{\text{lox}}}E_{\Gamma_i}(h)=\{e\},\]  
where the last intersection is taken over the loxodromic elements of a fixed non-elementary acylindrical action of $\Gamma_i$.
If $s\in\oplus_{i\in J}\Gamma_i$ and $i\notin J$, every element of $\Gamma_i$ centralizes $s$.
Proposition~\ref{prop:support} therefore forces the $i$th coordinate of every element of $\operatorname{supp}(\mathbb E_{\mathcal M}(\lambda(s)))$ to be $e$.
This proves the inclusion on group unitaries, hence everywhere by normality.
Consequently,
$\mathbb E_{\mathcal M}|_{L(\oplus_{i\in J}\Gamma_i)} =\mathbb E_{\mathcal M\cap L(\oplus_{i\in J}\Gamma_i)}$.
Moreover, $\mathcal M\cap L(\oplus_{i\in J}\Gamma_i)$ is boomerang in $L(\oplus_{i\in J}\Gamma_i)$.
Indeed, for conjugation by any element of $\oplus_{i\in J}\Gamma_i$, restrict the converging conditional expectations supplied by recurrence of $\mathcal M$ to this coordinate subalgebra.
\par\refstepcounter{claim}\label{claim:products-vanishing}
\noindent \textit{Claim~\theclaim}. If $\mathbb E_{\mathcal M}(\lambda(s))=0$ for every $s\in\bigcup_{\emptyset\neq I\subseteq[n]}K^I_{\mathrm{lox}}$, then $\mathcal M=\mathbb C1$.
\par Fix $e\neq g=(g_i)\in\Gamma$, and set $J=\{i:g_i\neq e\}$.
For each $i\in J$, choose a loxodromic $h_i\in\Gamma_i$ such that $g_i\notin E_{\Gamma_i}(h_i)$.
As in the proof of Lemma~\ref{prop:loxodromic-thickness}, there is $N\geq1$ such that
$h_i^{-k}g_i^{-1}h_i^kg_i\in K^i_{\mathrm{lox}} \quad\text{for every }i\in J\text{ and }k\geq N$.
Set $h=(h_i)$, with $h_i=e$ for $i\notin J$.
Choose a recurrence sequence $(n_k)$ for $\phi_{\mathcal M}$ under conjugation by $h^{-1}$, and pass to a subsequence with $n_{k+1}-n_k\geq N$.
Then $\phi_{\mathcal M}(h^{-n_k}gh^{n_k})\to\phi_{\mathcal M}(g)$.
For $\ell>k$, put $m=n_\ell-n_k\geq N$.
The $i$th coordinate of
$h^{-n_\ell}g^{-1}h^{n_\ell}h^{-n_k}gh^{n_k}$
is $h_i^{-n_k}(h_i^{-m}g_i^{-1}h_i^mg_i)h_i^{n_k}$ for $i\in J$, and is $e$ otherwise.
Thus this element belongs to $K^J_{\mathrm{lox}}$.
For $k>\ell$, use the inverse of the element with the indices reversed.
Hence $\bigcup_{\emptyset\neq I\subseteq[n]}K^I_{\mathrm{lox}}$ is $\phi_{\mathcal M}$-thick.
Lemma~\ref{lem:thickness-vanishing} gives $\phi_{\mathcal M}=\delta_e$, and hence $\mathcal M=\mathbb C1$.
\par\refstepcounter{claim}\label{claim:second}
\noindent \textit{Claim~\theclaim}. For every $\emptyset\neq I\subseteq[n]$ and $s\in K^I_{\mathrm{lox}}$, one has $\mathbb E_{\mathcal M}(\lambda(s))\in\{0,\lambda(s)\}$.
\par Assume that $\mathbb E_{\mathcal M}(\lambda(s))\neq0$.
Put $A_i=E_{\Gamma_i}(s_i)$ for $i\in I$.
Proposition~\ref{prop:support} and preservation of the coordinate subalgebras give
$\mathbb E_{\mathcal M}(\lambda(s))\in L\left(\oplus_{i\in I}A_i\right)$.
By the proof of \cite[Theorem~1]{abbott2019property}, choose infinite-order elements $t_i\in\Gamma_i$ such that $\langle A_i,t_i\rangle\cong A_i*\langle t_i\rangle$ under the canonical homomorphism.
Set $t=(t_i)$, with $t_i=e$ for $i\notin I$.
By Lemma~\ref{notzerounderconditionalexpec}, choose $m\geq1$ such that $\mathbb E_{\mathcal M}(\lambda(t^{-m}st^m))\neq0$.
Set $b_i=t_i^{-m}s_it_i^m$ and $B_i=t_i^{-m}A_it_i^m$ for $i\in I$, and set $b=t^{-m}st^m$.
Then $B_i=E_{\Gamma_i}(b_i)$ and $\langle A_i,B_i\rangle\cong A_i*B_i$, as in the proof of Proposition~\ref{thm:boompositiongroupvna}.
In particular, $A_i\cap B_i=\{e\}$.
Write
$\mathbb E_{\mathcal M}(\lambda(s))=\sum_{x\in\oplus_{i\in I}A_i}c_x\lambda(x), \mathbb E_{\mathcal M}(\lambda(b))=\sum_{y\in\oplus_{i\in I}B_i}d_y\lambda(y)$.
If $x_i=e$ for some $i\in I$, then preservation of the coordinate subalgebras gives
$c_x=\tau_0\bigl(\lambda(s)\mathbb E_{\mathcal M}(\lambda(x)^*)\bigr)=0$,
since $s_i\neq e$.
Similarly, $d_y=0$ if $y_i=e$ for some $i\in I$.
By bimodularity,
\begin{equation}\label{eq:products-fourier-comparison}
\begin{aligned}
\sum_{x,y}c_xd_y\lambda(xy)
&=\mathbb E_{\mathcal M}(\lambda(s))\mathbb E_{\mathcal M}(\lambda(b))\\
&=\mathbb E_{\mathcal M}\bigl(\lambda(s)\mathbb E_{\mathcal M}(\lambda(b))\bigr)\\
&=\sum_{y'}d_{y'}\mathbb E_{\mathcal M}(\lambda(sy')).
\end{aligned}
\end{equation}
Fix $x,y$ with $x_i,y_i\neq e$ for every $i\in I$, and suppose that $x_i\neq s_i$ for some $i\in I$.
For a term on the right with $d_{y'}\neq0$, all coordinates $y'_j$ are nonidentity, so $s_jy'_j$ is reduced and nontrivial in $A_j*B_j$.
If $xy\in\operatorname{supp}(\mathbb E_{\mathcal M}(\lambda(sy')))$, Proposition~\ref{prop:support} gives $k\geq1$ such that
$(x_iy_i)(s_iy'_i)^k=(s_iy'_i)^k(x_iy_i)$.
Both sides are reduced words in $A_i*B_i$.
Their first syllables are $x_i$ and $s_i$, respectively, a contradiction.
Comparing Fourier coefficients in \eqref{eq:products-fourier-comparison} gives $c_xd_y=0$ whenever $x\neq s$ and all coordinates of $x,y$ are nonidentity.
Since the expectation of $\lambda(b)$ is nonzero, choose $y$ with $d_y\neq0$.
It follows that $c_x=0$ for every $x\neq s$.
Thus $0\neq\mathbb E_{\mathcal M}(\lambda(s))=c_s\lambda(s)$.
Applying $\mathbb E_{\mathcal M}$ once more gives $c_s=c_s^2$, hence $c_s=1$.
This proves Claim 2.
\par By \textit{Claims \ref{claim:products-vanishing}} and \textit{\ref{claim:second}}, there is some $s\in K^I_{\mathrm{lox}}$ with $\lambda(s)\in\mathcal M$.
Write $\mathcal I$ for the set of all nonempty $I\subseteq[n]$ for which such an $s$ exists.
\par\refstepcounter{claim}\label{claim:third}
\noindent \textit{Claim~\theclaim}. For each $I\in\mathcal I$ and $i\in I$, there is $a_i\in K^i_{\mathrm{lox}}$ with $\lambda(a_i)\in\mathcal M\cap L(\Gamma_i)$.
Moreover, $\mathcal I$ contains a unique maximal element.
\par Fix $s\in K^I_{\mathrm{lox}}$ with $\lambda(s)\in\mathcal M$, and fix $i\in I$.
Choose a loxodromic $t_i\in\Gamma_i$ with $s_i\notin E_{\Gamma_i}(t_i)$.
As in the proof of Lemma~\ref{prop:loxodromic-thickness}, $t_i^{-m}s_i^{-1}t_i^ms_i$ is loxodromic for every sufficiently large $m$.
Regard $t_i$ as an element of $\Gamma$ supported in coordinate $i$.
By Lemma~\ref{notzerounderconditionalexpec}, choose such an $m$ with $\mathbb E_{\mathcal M}(\lambda(t_i^{-m}s^{-1}t_i^m))\neq0$.
Since $t_i^{-m}s^{-1}t_i^m\in K^I_{\mathrm{lox}}$, Claim 2 gives $\lambda(t_i^{-m}s^{-1}t_i^m)\in\mathcal M$.
Multiplying by $\lambda(s)$ gives
$\lambda(t_i^{-m}s^{-1}t_i^ms) =\lambda(t_i^{-m}s_i^{-1}t_i^ms_i)\in\mathcal M\cap L(\Gamma_i)$.
Set $a_i=t_i^{-m}s_i^{-1}t_i^ms_i$.
This proves the first assertion.
If $I_1,I_2\in\mathcal I$, choose such an $a_i$ for every $i\in I_1\cup I_2$.
Their product belongs to $K^{I_1\cup I_2}_{\mathrm{lox}}$, and its unitary belongs to $\mathcal M$.
Thus $I_1\cup I_2\in\mathcal I$.
Since $[n]$ is finite, $\mathcal I$ has a unique maximal element.
From now on, denote it by $I$.
\par\refstepcounter{claim}\label{claim:four}
\noindent \textit{Claim~\theclaim}. $\mathcal M\subseteq L(\oplus_{i\in I}\Gamma_i)$.
\par If $i\notin I$, then $\mathcal M\cap L(\Gamma_i)=\mathbb C1$.
Indeed, this intersection is boomerang, as shown above.
If it were nontrivial, Corollary~\ref{cor:boomerang-loxodromic-vanishing} and \textit{Claim}~\ref{claim:second} would give $a_i\in K^i_{\mathrm{lox}}$ with $\lambda(a_i)\in\mathcal M$, contradicting maximality of $I$.
It follows that $\phi_{\mathcal M}|_{\Gamma_i}=\delta_e$ for every $i\notin I$.
Fix $g=(g_j)\in\Gamma$ with $g_i\neq e$ for some $i\notin I$.
Choose a loxodromic $h\in\Gamma_i$ with $g_i\notin E_{\Gamma_i}(h)$, and choose $N\geq1$ such that $h^{-m}g_i^{-1}h^mg_i\in K^i_{\mathrm{lox}}$ for every $m\geq N$.
Regard $h$ as supported in coordinate $i$.
Choose a recurrence sequence $(n_k)$ for $\phi_{\mathcal M}$ under conjugation by $h^{-1}$ with $n_{k+1}-n_k\geq N$.
Then $\phi_{\mathcal M}(h^{-n_k}gh^{n_k})\to\phi_{\mathcal M}(g)$, and, for $k\neq\ell$,
$h^{-n_\ell}g^{-1}h^{n_\ell}h^{-n_k}gh^{n_k}\in K^i_{\mathrm{lox}}\subseteq\Gamma_i$.
The pairwise differences therefore have $\phi_{\mathcal M}$-value zero.
The Bessel argument in Subsection~\ref{subsec:thickness-vanishing} gives $\phi_{\mathcal M}(g)=0$.
Thus $\mathbb E_{\mathcal M}(\lambda(g))=0$ whenever $g\notin\oplus_{i\in I}\Gamma_i$.
For $x\in\mathcal M$, all Fourier coefficients outside this subgroup consequently vanish.
This proves \textit{Claim}~\ref{claim:four}.
\par For every $i\in I$, \textit{Claim}~\ref{claim:third} gives $a_i\in K^i_{\mathrm{lox}}$ with $\lambda(a_i)\in\mathcal M\cap L(\Gamma_i)$.
Apply the proof of Proposition~\ref{thm:boompositiongroupvna} to the boomerang subalgebra $\mathcal M\cap L(\Gamma_i)$ and $a_i$.
Together with \textit{Claim}~\ref{claim:second}, it gives $b_i\in K^i_{\mathrm{lox}}$ such that
$\lambda(b_i)\in\mathcal M\cap L(\Gamma_i), \quad E_{\Gamma_i}(a_i)\cap E_{\Gamma_i}(b_i)=\{e\}$.
Fix $g=(g_i)\in\oplus_{i\in I}\Gamma_i$.
For every $i\in I$ with $g_i\neq e$, choose $r_i\in\{a_i,b_i\}$ such that $g_i\notin E_{\Gamma_i}(r_i)$.
By \cite[Corollary~6.12]{DGO}, choose $k_i\in\mathbb Z$ such that $g_ir_i^{k_i}\in K^i_{\mathrm{lox}}$.
If $g_i=e$, take $r_i=a_i$ and $k_i=1$.
Set $r=(r_i^{k_i})_{i\in I}$, with identity coordinates outside $I$.
Since every $\lambda(r_i)$ belongs to $\mathcal M$, one has $\lambda(r)\in\mathcal M$.
Moreover, $gr\in K^I_{\mathrm{lox}}$.
Bimodularity and \textit{Claim}~\ref{claim:second} give
$\mathbb E_{\mathcal M}(\lambda(g))\lambda(r) =\mathbb E_{\mathcal M}(\lambda(gr))\in\{0,\lambda(gr)\}$.
Hence $\mathbb E_{\mathcal M}(\lambda(g))\in\{0,\lambda(g)\}$.
Together with \textit{Claim}~\ref{claim:four}, this holds for every $g\in\Gamma$.
Set $H=\{g\in\Gamma:\lambda(g)\in\mathcal M\}$.
Then $H$ is a subgroup and $L(H)\subseteq\mathcal M$.
For $x\in\mathcal M$ and $g\notin H$,
$\tau_0(x\lambda(g)^*)=\tau_0\bigl(x\mathbb E_{\mathcal M}(\lambda(g))^*\bigr)=0$.
Thus $x\in L(H)$, so $\mathcal M=L(H)$.
The definition of $H$ gives uniqueness, and Proposition~\ref{prop:boomerang-group-algebra} shows that $H$ is boomerang.
\end{proof}

\begin{proof}[Proof of Theorem~\ref{thm:hyperbolic_tf}]
By the recurrence argument in the introduction, $\mu$-almost every subalgebra is boomerang.
For $n=1$, apply Theorem~\ref{boomerangfromsubgroup}; for $n\geq2$, apply Theorem~\ref{thm:boomerang-products-ah}.
Thus $\mu(L(\Sub(\Gamma)))=1$.
Set $\nu=J_*\mu$.
By Proposition~\ref{prop:maps}, $\nu$ is an IRS of $\Gamma$ and $L_*\nu=\mu$.
Since $JL=\operatorname{id}$, $\nu$ is unique.
If $\mu$ is ergodic, then $\nu$ is ergodic by equivariance of $J$.
\end{proof}

\section{Invariant random algebras of mixed identity-free groups}\label{sec:ira-mif}
We prove IRA rigidity for mixed identity-free groups and their finite products.
We use homomorphisms fixing the original group pointwise to construct an IRA of a larger group, and then apply the results of the preceding section. We denote by $\text{UCP}_{\tau_0}(L(\Gamma))$ the collection of all trace preserving UCP maps on $L(\Gamma)$.

We first record the measurability and equivariance properties of the maps used below.
\begin{lemma}\label{lem:ira-transfer-measurability}
Let $\Gamma$ be a countable group.
\begin{enumerate}
\item For $M\in\SA(L(\Gamma))$, let 
$\psi_M(g,h)=\tau_0(\E_M(\lambda(g))\lambda(h)^*)$.
Then $\psi_M\in\operatorname{PD}_1(\Gamma\times\Gamma)$, and $M\mapsto\psi_M$ is continuous and injective.
\item The map $\mathfrak f: \text{UCP}_{\tau_0}(L(\Gamma))\to \SA(L(\Gamma))$ that sends $\Phi\mapsto\operatorname{Fix}(\Phi)$ is Borel. Here, $\text{UCP}_{\tau_0}(L(\Gamma))$ is equipped with point ultraweak topology.
\item Let $\Lambda$ be a countable group containing $\Gamma$, and let $\rho:\Lambda\to\Gamma$ be a homomorphism with $\rho|_\Gamma=\operatorname{id}_\Gamma$.
Define $\psi_{\rho,M}(g,h):=\psi_M(\rho(g),\rho(h))$. Equip $\SA(L(\Gamma))$ with the $\Lambda$-action
$s\cdot M=\lambda(\rho(s))M\lambda(\rho(s))^*$.
Then $\psi_{\rho,M}|_{\Gamma\times\Gamma}=\psi_M$.
Moreover, $\mathfrak{pd}:\SA(L(\Gamma))\to\operatorname{PD}_1(\Lambda\times\Lambda)$ that sends $M\mapsto\psi_{\rho,M}$ is measurable and equivariant, measurable and $\Lambda$-equivariant, where the action on
$\operatorname{PD}_1(\Lambda\times\Lambda)$ is
$(s\cdot\psi)(g,h)=\psi(s^{-1}gs,s^{-1}hs)$.
In particular, for every IRA $\mu$ of $\Gamma$, one has $\mathfrak{pd}_*\mu\in\Prob^\Lambda(\operatorname{PD}_1(\Lambda\times\Lambda))$.
\end{enumerate}
\end{lemma}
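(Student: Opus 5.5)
The plan is to treat the three parts separately, using Proposition~\ref{prop:ira-ucp-correspondence} and the characterization of Effros--Mar\'echal convergence as pointwise $\|\cdot\|_2$-convergence of conditional expectations.

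\emph{Part (1).} Since $\E_M$ is a normal trace-preserving UCP map on $L(\Gamma)$, and $\psi_M(g,h)=\tau_0(\E_M(\lambda(g))\lambda(h)^*)$ is exactly the coefficient function attached to $T=\E_M$, the converse half of Proposition~\ref{prop:ira-ucp-correspondence} places $\psi_M$ in $\operatorname{PD}_1(\Gamma\times\Gamma)$, with $\psi_M(g,e)=\psi_M(e,g)=\delta_{g,e}$.

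For continuity, suppose $M_i\to M$. Then $\E_{M_i}(\lambda(g))\to\E_M(\lambda(g))$ in $\|\cdot\|_2$ for each $g$. Hence each coefficient $\tau_0(\E_{M_i}(\lambda(g))\lambda(h)^*)$ converges, which is pointwise convergence of $\psi_{M_i}$.

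For injectivity, $\psi_M$ determines all Fourier coefficients of $\E_M(\lambda(g))$, so it determines $\E_M$ on $\bC\Gamma$. By normality and $\|\cdot\|_2$-continuity it determines $\E_M$ everywhere. Finally $M=\E_M(L(\Gamma))$ is recovered from $\E_M$.

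\emph{Part (2).} I would write $\operatorname{Fix}(\Phi)$ for a trace-preserving $\Phi$ as a limit. By the mean ergodic theorem on $L^2(L(\Gamma),\tau_0)$, the Ces\`aro averages $\frac1N\sum_{k<N}\Phi^k(x)$ converge in $\|\cdot\|_2$ to $\E_{\operatorname{Fix}(\Phi)}(x)$. This requires a few facts:
\begin{itemize}
\item $\Phi$ extends to an $L^2$-contraction, since Kadison--Schwarz gives $\tau_0(\Phi(x)^*\Phi(x))\le\tau_0(\Phi(x^*x))=\tau_0(x^*x)$.
\item The fixed vectors of a contraction coincide with those of its adjoint.
\item The $L^2$-fixed space is the closure of $\operatorname{Fix}(\Phi)$, which is a von Neumann algebra by the multiplicative-domain argument. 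Here one uses the equality case of Kadison--Schwarz together with faithfulness of $\tau_0$.
\end{itemize}
With the limit formula in hand, $\Phi\mapsto\Phi^k(x)$ is Borel for each $k$, since composition is Borel on the Polish space of normal UCP maps. Hence $\Phi\mapsto\E_{\operatorname{Fix}(\Phi)}(\lambda(g))$ is a pointwise limit of Borel maps into $L^2$. Since the Effros--Mar\'echal Borel structure is generated by the maps $M\mapsto\E_M(\lambda(g))$, $\mathfrak f$ is Borel. I expect this part to be the main obstacle, specifically the fixed-point identification and the Borel-ness of $\Phi\mapsto\Phi^k$ in the point ultraweak topology. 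If the latter is delicate, I would instead use the correspondence with $\operatorname{PD}_1$ and express the coefficients of $\Phi^k$ as $L^2$-convergent sums of products of coefficients of $\Phi$.

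\emph{Part (3).} The restriction identity is immediate from $\rho|_\Gamma=\mathrm{id}$.

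To see $\psi_{\rho,M}\in\operatorname{PD}_1(\Lambda\times\Lambda)$, note that it is the pullback of a positive-definite function under the homomorphism $\rho\times\rho$; pullbacks of positive-definite functions under homomorphisms remain positive definite, and normalization is clear. Measurability, in fact continuity, follows from part (1) composed with the continuous pullback map.

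For equivariance, compute
\[(s\cdot\psi_{\rho,M})(g,h)=\psi_M(\rho(s)^{-1}\rho(g)\rho(s),\rho(s)^{-1}\rho(h)\rho(s)),\]
which by Lemma~\ref{lem:conjugate-expectation} equals $\psi_{\lambda(\rho(s))M\lambda(\rho(s))^*}(\rho(g),\rho(h))$. One checks this conjugation identity just as for $\phi_{\cA}$, using trace invariance.

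For the last assertion, an IRA $\mu$ of $\Gamma$ is invariant under the $\Lambda$-action, because that action factors through $\rho(\Lambda)\subseteq\Gamma$. Pushing $\mu$ forward by the equivariant map $\mathfrak{pd}$ therefore gives a $\Lambda$-invariant probability measure on $\operatorname{PD}_1(\Lambda\times\Lambda)$.
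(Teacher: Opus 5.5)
Your proposal is correct and is essentially the paper's proof. Parts (1) and (3) are the same coefficient computations; you obtain positive-definiteness from the converse half of Proposition~\ref{prop:ira-ucp-correspondence} rather than from the bimodule $L^2(L(\Gamma))\otimes_M L^2(L(\Gamma))$, which makes no real difference. Part (2) rests, exactly as in the paper, on the mean ergodic theorem identifying $\E_{\operatorname{Fix}(\Phi)}(x)$ with the $L^2$-limit of Ces\`aro averages of $\Phi^k(x)$.

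The only divergence is the measurability bookkeeping in (2). Since $\Phi\mapsto\Phi^2$ need not be continuous for the point ultraweak topology, your bare claim that composition is Borel really does need your fallback: expressing the coefficients of $\Phi^k$ as absolutely convergent sums such as $\sum_a\psi(g,a)\psi(a,h)$. The paper instead works in the pointwise $L^2$ topology, where $\phi\mapsto\bigl\|\frac1n\sum_{k=1}^n\phi^k(x)\bigr\|_2$ is continuous by a telescoping estimate, and then transfers Borel sets to the point ultraweak topology using lower semicontinuity of $\|\cdot\|_2$.

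Your explicit identification of the $L^2$-fixed space with $L^2(\operatorname{Fix}(\Phi))$ fills in a step the paper leaves implicit. Besides the multiplicative-domain argument, it also uses that Ces\`aro averages of $x\in L(\Gamma)$ are norm-bounded, so their $L^2$-limit lies in $L(\Gamma)$ and is $\Phi$-fixed.
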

\begin{proof}
The left-right representation on $L^2(L(\Gamma))\otimes_M L^2(L(\Gamma))$ has coefficient $\psi_M$ at the unit vector $1\otimes1$; see \cite[Section~13.1.2]{AP:II1}.
Thus $\psi_M$ is normalized and positive definite.
If $M_i\to M$, then $|\psi_{M_i}(g,h)-\psi_M(g,h)|\leq\|\E_{M_i}(\lambda(g))-\E_M(\lambda(g))\|_2\to0$.
Moreover, $\psi_M=\psi_N$ implies $\E_M(\lambda(g))=\E_N(\lambda(g))$ for every $g$, hence $M=N$.
For the second assertion, $\operatorname{Fix}(\Phi)$ is a von Neumann subalgebra by \cite[Lemma~4.1]{FLMP:ucp}. The Effros--Mar\'echal topology on $\SA(L(\Gamma))$ is the smallest topology for which the maps
$M\longmapsto\|\E_M(x)\|_2, ~ x\in L(\Gamma)$
are continuous.
Choose a countable $L^2$-dense family of elements $x$.
Thus we need to show that
$\left\{\phi:c_1<\|\E_{\operatorname{Fix}(\phi)}(x)\|_2<c_2\right\}$
is Borel for every $x\in L(\Gamma)$ and $c_1<c_2$.
We first work with pointwise $L^2$ convergence.
By the mean ergodic theorem, we see that
$\|\E_{\operatorname{Fix}(\phi)}(x)\|_2 =\inf_{n\geq1}a_{n,\phi}(x), ~\text{ where } a_{n,\phi}(x):= \left\|\frac1n\sum_{k=1}^n\phi^k(x)\right\|_2$.
Note that
$\inf_{n\geq1}a_{n,\phi}(x)<c_2
\iff
\exists N\geq1\ \exists n\geq1:
a_{n,\phi}(x)<c_2-\frac1N$, and $\inf_{n\geq1}a_{n,\phi}(x)>c_1
\iff
\exists N\geq1\ \forall n\geq1:
a_{n,\phi}(x)\geq c_1+\frac1N$.
Consequently,
\begin{align*}
&\left\{\phi:c_1<\|\E_{\operatorname{Fix}(\phi)}(x)\|_2<c_2\right\}\\&=\left\{\phi:c_1<\inf_{n\geq1}a_{n,\phi}(x)\right\}
\cap
\left\{\phi:\inf_{n\geq1}a_{n,\phi}(x)<c_2\right\}\\
&=\left[
\bigcup_{N\geq1}\bigcap_{n\geq1}
\left\{\phi:a_{n,\phi}(x)\geq c_1+\frac1N\right\}
\right]
\bigcap\left[
\bigcup_{N\geq1}\bigcup_{n\geq1}
\left\{\phi:a_{n,\phi}(x)<c_2-\frac1N\right\}
\right].
\end{align*}
The sets inside the first curly bracket are closed, and those inside the second curly bracket are open, for pointwise $L^2$ convergence.
Indeed, fix $\psi$, $n\geq1$ and $\varepsilon>0$, and put $F=\{\psi^i(x):0\leq i<n\}$.
Suppose that $\|(\phi-\psi)(z)\|_2<\varepsilon$ for every $z\in F$.
Using $L^2$-contractivity and
$\phi^k(x)-\psi^k(x) =\sum_{i=0}^{k-1} \phi^{k-1-i}\bigl((\phi-\psi)(\psi^i(x))\bigr)$,
we obtain that
\[
\begin{aligned}
|a_{n,\phi}(x)-a_{n,\psi}(x)|
&=\frac1n
\left|
\left\|\sum_{k=1}^n\phi^k(x)\right\|_2
-\left\|\sum_{k=1}^n\psi^k(x)\right\|_2
\right|\\
&\leq\frac1n
\left\|\sum_{k=1}^n\bigl(\phi^k(x)-\psi^k(x)\bigr)\right\|_2\\
&\leq\frac1n\sum_{k=1}^n\sum_{i=0}^{k-1}
\|(\phi-\psi)(\psi^i(x))\|_2\\
&<\frac{\varepsilon}{n}\sum_{k=1}^n k
=\frac{\varepsilon(n+1)}2.
\end{aligned}
\]
Thus $\phi\mapsto a_{n,\phi}(x)$ is continuous, proving the asserted closedness and openness.
Hence $\Phi\mapsto\operatorname{Fix}(\Phi)$ is Borel for pointwise $L^2$ convergence.
Finally, the pointwise $L^2$ topology has a countable basis formed from finite intersections of sets
$\{\phi:\|\phi(z)-y\|_2<r\}$. Each such set is Borel for pointwise ultraweak convergence, since
$\{\phi:\|\phi(z)-y\|_2<r\} = \bigcup_{m\geq1} \left\{\phi:\|\phi(z)-y\|_2\leq r-\frac1m\right\}$,
and the $L^2$ norm is ultraweakly lower semicontinuous.
Therefore $\Phi\mapsto\operatorname{Fix}(\Phi)$ is also Borel in point ultraweak topology (also see~\cite[Lemma~6.3]{AFJ:uniformly-recurrent} and \cite[Section~4]{FLMP:ucp}).

For the third assertion, observe that $\mathfrak{pd}$ is continuous by the first assertion.
The restriction identity follows from $\rho|_\Gamma=\operatorname{id}_\Gamma$.
Let $\mu$ be an IRA of $\Gamma$.
The action $s\cdot M=\lambda(\rho(s))M\lambda(\rho(s))^*$ of $\Lambda$ preserves $\mu$.
It follows from Lemma~\ref{lem:conjugate-expectation} that
$$\psi_{\rho,\lambda(\rho(s))M\lambda(\rho(s))^*}(g,h)
=\psi_{\rho,M}(s^{-1}gs,s^{-1}hs).$$
Thus, for every $s\in\Lambda$ and bounded Borel function $f$,
$$\int f(s\cdot\mathfrak{pd}(M))\,d\mu(M) =\int f(\mathfrak{pd}(\lambda(\rho(s))M\lambda(\rho(s))^*))\,d\mu(M) =\int f(\mathfrak{pd}(M))\,d\mu(M).$$
Hence $\mathfrak{pd}_*\mu$ is $\Lambda$-invariant.
\end{proof}
Recall that a nontrivial group $\Gamma$ is mixed identity-free if every nonidentity $h\in\Gamma*F_r$, for every $r\geq1$, survives under some homomorphism $\Gamma*F_r\to\Gamma$ fixing $\Gamma$ pointwise.
Here $F_r$ is the free group of rank $r$; it suffices to take $r=1$ by \cite[Remark~5.1]{hull-osin2016}. We now apply this construction to the homomorphisms supplied by mixed identity-freeness.
\begin{theorem}\label{thm:ira-mif}
Every countable nontrivial mixed identity-free group is IRA rigid.
\end{theorem}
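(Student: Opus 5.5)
The plan is to embed $\Gamma$ in the larger group $\Lambda=\Gamma*\langle t\rangle$ and reduce to Theorem~\ref{thm:hyperbolic_tf}. By the discussion in the introduction, $\Lambda$ is acylindrically hyperbolic with trivial amenable radical, so every IRA of $\Lambda$ is $L_*$ of an IRS. Fix an IRA $\mu$ of $\Gamma$. From $\mu$ I will build an invariant random trace-preserving UCP map on $L(\Lambda)$ whose fixed-point algebra, intersected with $L(\Gamma)$, recovers $M$.

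\emph{Step 1 (a good sequence of retractions).} Mixed identity-freeness gives, for each $e\neq g\in\Lambda$, a retraction $\rho\colon\Lambda\to\Gamma$ with $\rho|_\Gamma=\mathrm{id}$ and $\rho(g)\neq e$. I will use the standard fact that this upgrades to finitely many elements simultaneously; if needed, this follows by composing retractions with substitutions $t\mapsto w(t)$. Enumerating $\Lambda\setminus\{e\}$, I then get retractions $\rho_n$ such that every $g\neq e$ satisfies $\rho_n(g)\neq e$ for all large $n$.

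\emph{Step 2 (limit construction).} Let $\Lambda$ act on $\SA(L(\Gamma))$ through $\rho_n$; this action preserves $\mu$. Lemma~\ref{lem:ira-transfer-measurability}(3) then makes the pushforward of $\mu$ under $M\mapsto\psi_{\rho_n,M}$ a $\Lambda$-invariant measure on $\operatorname{PD}_1(\Lambda\times\Lambda)$. To keep track of $M$, I will instead push $\mu$ forward under $M\mapsto(M,\psi_{\rho_n,M})$ to obtain $\eta_n$ on $\SA(L(\Gamma))\times\operatorname{PD}_1(\Lambda\times\Lambda)$. The first marginal of $\eta_n$ is always $\mu$, and the second factor is compact, so the family $(\eta_n)$ is tight. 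Passing to a subsequence, I take a limit $\eta$ that still has first marginal $\mu$. I expect the Polish-but-not-compact first factor to be the main technical obstacle, and this tightness argument is how I would handle it.

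The limit $\eta$ has the following properties. Its second marginal is $\Lambda$-invariant, being a limit of $\Lambda$-invariant measures. For $\eta$-a.e.\ $(M,\psi)$ we have $\psi|_{\Gamma\times\Gamma}=\psi_M$, since this holds for each $n$ and is a closed condition. Finally, $\psi(g,e)=\psi(e,g)=\delta_{g,e}$ almost surely: indeed $\psi_{\rho_n,M}(g,e)=\tau_0(\lambda(\rho_n g))=\delta_{\rho_n(g),e}$, and this vanishes for large $n$ by Step~1. Proposition~\ref{prop:ira-ucp-correspondence} now turns $\psi$ into a trace-preserving normal UCP map $T_\psi$ on $L(\Lambda)$, equivariantly. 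Applying Lemma~\ref{lem:ira-transfer-measurability}(2), $\operatorname{Fix}(T_\psi)$ is distributed as an IRA of $\Lambda$.

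\emph{Step 3 (recovering $M$).} The identity $\psi|_{\Gamma\times\Gamma}=\psi_M$ says exactly that $\E_{L(\Gamma)}\circ T_\psi|_{L(\Gamma)}=\E_M$. Take $x\in M$. Then $\E_{L(\Gamma)}T_\psi x=x$ while $\|T_\psi x\|_2\leq\|x\|_2$, so $T_\psi x=x$. Conversely, take $x\in\operatorname{Fix}(T_\psi)\cap L(\Gamma)$. Then $x=\E_{L(\Gamma)}T_\psi x=\E_M x\in M$. Hence $M=\operatorname{Fix}(T_\psi)\cap L(\Gamma)$ almost surely. By Theorem~\ref{thm:hyperbolic_tf}, almost surely $\operatorname{Fix}(T_\psi)=L(H)$ for a subgroup $H\leq\Lambda$, and therefore $M=L(H)\cap L(\Gamma)=L(H\cap\Gamma)$.

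It follows that $\mu$ is supported on $L(\Sub(\Gamma))$. Setting $\nu=J_*\mu$, which is an IRS by Proposition~\ref{prop:maps}, gives $\mu=L_*\nu$. For finite products, the same argument runs with $\Lambda=\oplus_i(\Gamma_i*\langle t_i\rangle)$ and coordinatewise retractions, invoking the product case of Theorem~\ref{thm:hyperbolic_tf}.
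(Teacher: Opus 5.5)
Your proposal is correct and follows the paper's proof essentially step by step. It uses the same retractions $\rho_n\colon\Gamma*\langle t\rangle\to\Gamma$, the same pushforward of $\mu$ to $\Lambda$-invariant measures on $\operatorname{PD}_1(\Lambda\times\Lambda)$, the passage $\psi\mapsto T_\psi\mapsto\operatorname{Fix}(T_\psi)$, the appeal to the acylindrically hyperbolic case for $\Lambda$, and the same Schwarz-inequality recovery of $M$ inside $L(\Gamma)$.

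The differences are only in bookkeeping. You carry $M$ along through a coupling on $\SA(L(\Gamma))\times\operatorname{PD}_1(\Lambda\times\Lambda)$, which is tight because the first marginal is always $\mu$. The paper instead works on $\operatorname{PD}_1(\Lambda\times\Lambda)$ alone and recovers $M$ from $\psi|_{\Gamma\times\Gamma}$, using $(\operatorname{Res}_\Gamma)_*\eta=\mathfrak{pd}_*\mu$, injectivity of $M\mapsto\psi_M$ and Lusin--Souslin. You also identify $M$ as $\operatorname{Fix}(T_\psi)\cap L(\Gamma)$ rather than $\E_{L(\Gamma)}(\operatorname{Fix}(T_\psi))$. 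Both routes are valid, and yours is slightly cleaner.
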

\begin{proof}
Let $\mu$ be an IRA of $\Gamma$, and enumerate the nonidentity elements of $\Lambda=\Gamma*\langle t\rangle$, with $\langle t\rangle\cong\mathbb Z$, by $h_1,h_2,\ldots$.
By \cite[Proposition~5.3]{hull-osin2016}, choose homomorphisms $\rho_j:\Lambda\to\Gamma$ fixing $\Gamma$ pointwise such that $\rho_j(h_i)\neq e$ for $i\leq j$. 
Let $\mathfrak{pd}_j$ be the map in Lemma~\ref{lem:ira-transfer-measurability}(3) associated with $\rho_j$, and let
$\eta_j=(\mathfrak{pd}_j)_*\mu$.
Each $\eta_j$ is a $\Lambda$-invariant probability measure on $\operatorname{PD}_1(\Lambda\times\Lambda)$.
Since this space is compact metrizable, pass to a subsequence such that $\eta_j\to\eta$ weakly.
Note that $\eta$ is $\Lambda$-invariant. We next verify that every $\psi\in\operatorname{supp}\eta$ satisfies $\psi(g,e)=\psi(e,g)=\delta_{g,e}$ for every $g\in\Gamma*\langle t\rangle$.
This allows us to apply Proposition~\ref{prop:ira-ucp-correspondence}.
Fix $e\neq g=h_i\in\Lambda$.
For every $j\geq i$ and every $M\in\SA(L(\Gamma))$,
we have
$\psi_{\rho_j,M}(g,e)
=\psi_{\rho_j,M}(e,g)
=\tau_0(\lambda(\rho_j(g)))=0$.
Observe that the set
$C_g=\{\psi\in\operatorname{PD}_1(\Lambda\times\Lambda):
\psi(g,e)=\psi(e,g)=0\}$
is closed.
Since $\mathfrak{pd}_j(M)\in C_g$ for every $M$ and every $j\geq i$, one has $\eta_j(C_g)=1$ for $j\geq i$.
Therefore, it follows that
$1=\limsup_j\eta_j(C_g)\leq\eta(C_g)\leq1$.
Hence $\operatorname{supp}\eta\subseteq C_g$.
Since $g\neq e$ was arbitrary and $\psi(e,e)=1$, every $\psi\in\operatorname{supp}\eta$ satisfies
$\psi(g,e)=\psi(e,g)=\delta_{g,e}$
for every $g\in\Lambda$. Apply the equivariant Borel identification $\psi\mapsto T_\psi$ from Proposition~\ref{prop:ira-ucp-correspondence}.
By Lemma~\ref{lem:ira-transfer-measurability}(2), the map $\psi\xrightarrow{\mathfrak F}\operatorname{Fix}(T_\psi)$ is Borel; it is also $\Lambda$-equivariant.
Thus
$\widetilde\mu
=(\mathfrak F)_*\eta\in\text{IRA}(L(\Lambda))$.

The Bass--Serre action of $\Lambda$ is non-elementary and acylindrical, with trivial edge stabilizers.
A finite normal subgroup fixes a vertex, and its fixed subtree is $\Lambda$-invariant.
Minimality makes this fixed subtree the whole tree, so triviality of the edge stabilizers makes the subgroup trivial.
By \cite[Corollary~7.3(a)]{osin2016acylindrically}, the amenable radical of $\Lambda$ is finite, hence trivial.
By the Poincar\'e recurrence argument in the introduction, $\widetilde\mu$-almost every subalgebra is boomerang.
Theorem~\ref{boomerangfromsubgroup} therefore gives $\widetilde\mu(L(\Sub(\Lambda)))=1$.
Since $\widetilde\mu=\mathfrak F_*\eta$, for $\eta$-almost every $\psi$ there is a subgroup $H\leq\Lambda$ such that $\operatorname{Fix}(T_\psi)=L(H)$.

We claim that, for $\eta$-almost every $\psi\in\operatorname{PD}_1(\Lambda\times\Lambda)$, there exists a unique $M\in\SA(L(\Gamma))$ such that $\psi|_{\Gamma\times\Gamma}=\mathfrak{pd}(M)$.
Let $\operatorname{Res}_\Gamma:\operatorname{PD}_1(\Lambda\times\Lambda)\to\operatorname{PD}_1(\Gamma\times\Gamma)$ be the continuous map given by $\operatorname{Res}_\Gamma(\psi)=\psi|_{\Gamma\times\Gamma}$.
Since $\rho_j|_\Gamma=\operatorname{id}_\Gamma$, one has $\operatorname{Res}_\Gamma\circ\mathfrak{pd}_j=\mathfrak{pd}$.
By the definition of $\eta_j$,
$(\operatorname{Res}_\Gamma)_*\eta_j
=(\operatorname{Res}_\Gamma\circ\mathfrak{pd}_j)_*\mu
=\mathfrak{pd}_*\mu$.
By the continuity of $\operatorname{Res}_\Gamma$ and weak convergence of $\eta_j$ to $\eta$, it follows that $(\operatorname{Res}_\Gamma)_*\eta=\mathfrak{pd}_*\mu$.
By Lemma~\ref{lem:ira-transfer-measurability}(1), $\mathfrak{pd}$ is continuous and injective.
Thus $\mathfrak{pd}(\SA(L(\Gamma)))$ is Borel by the Lusin--Souslin theorem \cite[Theorem~15.1]{Kechris1995}.
Since $(\operatorname{Res}_\Gamma)_*\eta=\mathfrak{pd}_*\mu$, one has $\operatorname{Res}_\Gamma(\psi)\in\mathfrak{pd}(\SA(L(\Gamma)))$ for $\eta$-almost every $\psi$.
Injectivity of $\mathfrak{pd}$ gives a unique $M\in\SA(L(\Gamma))$ such that $\operatorname{Res}_\Gamma(\psi)=\mathfrak{pd}(M)$.
This proves the claim.

Fix such a $\psi$ with $\operatorname{Fix}(T_\psi)=L(H)$, and let $M$ satisfy $\operatorname{Res}_\Gamma(\psi)=\mathfrak{pd}(M)$.
For $g,h\in\Gamma$, we have
$\tau_0(T_\psi(\lambda(g))\lambda(h)^*) =\psi(g,h) =\psi_M(g,h) =\tau_0(\mathbb E_M(\lambda(g))\lambda(h)^*)$.
Comparing Fourier coefficients gives $\mathbb E_{L(\Gamma)}(T_\psi(\lambda(g)))=\mathbb E_M(\lambda(g))$.
Consequently, it follows that
$\mathbb E_{L(\Gamma)}(T_\psi(x))=\mathbb E_M(x)$ for every $x\in L(\Gamma)$.
If $x\in M$, then $\mathbb E_{L(\Gamma)}(T_\psi(x))=x$.
Using the Schwarz inequality along with the fact that $T_{\psi}$ is trace preserving, we get that
\begin{align*}
\|T_\psi(x)-x\|_2^2
&=\|T_\psi(x)\|_2^2-\|x\|_2^2
\\&=\tau_0(T_\psi(x)^*T_\psi(x))-\|x\|_2^2
\leq\tau_0(T_\psi(x^*x))
-\tau_0(x^*x)=0.
\end{align*}
Thus $x\in\operatorname{Fix}(T_\psi)$.
Since $x=\mathbb E_{L(\Gamma)}(x)$, this proves $M\subseteq\mathbb E_{L(\Gamma)}(\operatorname{Fix}(T_\psi))$.
Conversely, if $x\in\mathbb E_{L(\Gamma)}(\operatorname{Fix}(T_\psi))=\mathbb E_{L(\Gamma)}(L(H))=L(H\cap\Gamma)$, then $T_\psi(x)=x$, so that
$\mathbb E_M(x)=\mathbb E_{L(\Gamma)}(T_\psi(x))=x$.
Therefore
$M=\mathbb E_{L(\Gamma)}(\operatorname{Fix}(T_\psi))
=L(H\cap\Gamma)$.
By Proposition~\ref{prop:maps}, $L(\Sub(\Gamma))$ is compact, and so is $\mathfrak{pd}(L(\Sub(\Gamma)))$.
The preceding argument and injectivity of $\mathfrak{pd}$ give
$\mu(L(\Sub(\Gamma)))=(\mathfrak{pd}_*\mu)(\mathfrak{pd}(L(\Sub(\Gamma))))=\eta\bigl(\operatorname{Res}_\Gamma^{-1}(\mathfrak{pd}(L(\Sub(\Gamma))))\bigr)=1$.
Hence $\nu=J_*\mu$ is an IRS of $\Gamma$ and $L_*\nu=\mu$.
Finally, $JL=\operatorname{id}$ gives uniqueness.
\end{proof}
We now apply the same construction in each coordinate.
The larger group is a finite product of the free products $\Gamma_i*\langle t_i\rangle$, so we can apply Theorem~\ref{thm:hyperbolic_tf}. The proof is similar to that of Theorem~\ref{thm:ira-mif}; however, we spell out the details. Together with Theorem~\ref{thm:ira-mif}, the following theorem proves Theorem~\ref{thm:ira-mif-intro}.
\begin{theorem}\label{thm:ira-products-mif}
Let $n\geq2$, and let $\Gamma_i$ be countable nontrivial mixed identity-free groups for $1\leq i\leq n$.
Set $\Gamma=\bigoplus_{i=1}^n\Gamma_i$.
Then every IRA $\mu$ of $\Gamma$ is $L_*\nu$ for a unique IRS $\nu$ of $\Gamma$.
\end{theorem}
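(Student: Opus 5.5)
The plan is to run the proof of Theorem~\ref{thm:ira-mif} coordinatewise. Set $\Lambda_i=\Gamma_i*\langle t_i\rangle$ and $\Lambda=\bigoplus_{i=1}^n\Lambda_i$, which contains $\Gamma$.

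\emph{Step 1: the retractions.} Enumerate the nonidentity elements $h_1,h_2,\ldots$ of $\Lambda$. By \cite[Proposition~5.3]{hull-osin2016}, applied in each coordinate, choose homomorphisms $\rho_{i,j}:\Lambda_i\to\Gamma_i$ fixing $\Gamma_i$ pointwise. We require that, for every $k\leq j$ and every coordinate $i$ in which $h_k$ is nontrivial, $\rho_{i,j}$ does not kill the $i$th coordinate of $h_k$. This is possible because only finitely many coordinate elements must survive at each stage. Set $\rho_j=\oplus_i\rho_{i,j}:\Lambda\to\Gamma$. Then $\rho_j|_\Gamma=\operatorname{id}_\Gamma$, and $\rho_j(h_k)\neq e$ for all $k\leq j$.

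\emph{Step 2: the limiting IRA.} Lemma~\ref{lem:ira-transfer-measurability}(3) applies verbatim to $\rho_j$. It gives $\Lambda$-invariant measures $\eta_j=(\mathfrak{pd}_j)_*\mu$ on the compact space $\operatorname{PD}_1(\Lambda\times\Lambda)$. Pass to a weak limit $\eta$. The closed-set argument then shows that $\eta$-a.e.\ $\psi$ satisfies $\psi(g,e)=\psi(e,g)=\delta_{g,e}$ on all of $\Lambda$. Proposition~\ref{prop:ira-ucp-correspondence} (for the group $\Lambda$) and Lemma~\ref{lem:ira-transfer-measurability}(2) then produce an IRA $\widetilde\mu=\mathfrak F_*\eta$ of $\Lambda$.

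\emph{Step 3: applying the product theorem.} Each $\Lambda_i$ is acylindrically hyperbolic with trivial amenable radical. This follows from the Bass--Serre argument already given in the proof of Theorem~\ref{thm:ira-mif}, and it needs only $\Gamma_i\neq\{e\}$. Theorem~\ref{thm:hyperbolic_tf}, equivalently Theorem~\ref{thm:boomerang-products-ah} together with Poincar\'e recurrence, therefore gives $\operatorname{Fix}(T_\psi)=L(H)$ for some $H\leq\Lambda$, for $\eta$-a.e.\ $\psi$.

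\emph{Step 4: recovering $\mu$.} Since $\operatorname{Res}_\Gamma\circ\mathfrak{pd}_j=\mathfrak{pd}$, we get $(\operatorname{Res}_\Gamma)_*\eta=\mathfrak{pd}_*\mu$. Lusin--Souslin and the injectivity of $\mathfrak{pd}$ then attach to $\eta$-a.e.\ $\psi$ a unique $M$ with $\psi|_{\Gamma\times\Gamma}=\psi_M$. We then compare Fourier coefficients. The Schwarz-inequality argument shows that $\mathbb E_{L(\Gamma)}\circ T_\psi=\mathbb E_M$ on $L(\Gamma)$, and hence that $M=\mathbb E_{L(\Gamma)}(L(H))=L(H\cap\Gamma)$. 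Compactness of $L(\Sub(\Gamma))$ then yields $\mu(L(\Sub(\Gamma)))=1$. So $\nu=J_*\mu$ is an IRS with $L_*\nu=\mu$, and it is unique because $JL=\operatorname{id}$.

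\emph{Main obstacle.} The only genuinely new point is Step 1. A single homomorphism $\Lambda\to\Gamma$ must keep every nontrivial element of $\Lambda$ alive eventually. Elements of the direct sum may be nontrivial in several coordinates at once, so coordinatewise survival of the finitely many relevant coordinates has to be arranged simultaneously. Since $\rho_j$ acts coordinatewise, $\rho_j(h)\neq e$ exactly when some nontrivial coordinate survives, and we arrange that all of them do. I expect this step to be routine. Everything else should transfer word for word from the proof of Theorem~\ref{thm:ira-mif}, with Theorem~\ref{boomerangfromsubgroup} replaced by Theorem~\ref{thm:boomerang-products-ah}.
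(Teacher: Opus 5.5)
Your proposal is correct and is essentially the paper's proof. The paper also uses coordinatewise retractions $\rho_j=\bigoplus_i\rho_j^{(i)}$ from \cite[Proposition~5.3]{hull-osin2016}, and a weak limit $\eta$ of the measures $(\mathfrak{pd}_j)_*\mu$ to obtain the IRA $\mathfrak F_*\eta$ of $\Lambda=\bigoplus_i(\Gamma_i*\langle t_i\rangle)$. It then applies Theorem~\ref{thm:boomerang-products-ah} in place of Theorem~\ref{boomerangfromsubgroup}, and recovers $\mu$ by the same restriction and Lusin--Souslin argument.

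The remaining differences are cosmetic: you enumerate $\Lambda\setminus\{e\}$ and keep every nontrivial coordinate alive, whereas the paper enumerates each $\Lambda_i\setminus\{e\}$; and you use the closed-set argument of Theorem~\ref{thm:ira-mif}, whereas the paper integrates $|\psi(g,e)|^2+|\psi(e,g)|^2$. One small attribution slip in Step 4: $\mathbb E_{L(\Gamma)}\circ T_\psi=\mathbb E_M$ comes from the Fourier comparison and normality, while the Schwarz inequality is what gives $M\subseteq\operatorname{Fix}(T_\psi)$.
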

\begin{proof}
Set $\Lambda_i=\Gamma_i*\langle t_i\rangle$, where $\langle t_i\rangle\cong\mathbb Z$, and $\Lambda=\bigoplus_{i=1}^n\Lambda_i$.
For each $i$, enumerate $\Lambda_i\setminus\{e\}$ by $h_1^{(i)},h_2^{(i)},\ldots$.
By \cite[Proposition~5.3]{hull-osin2016}, choose homomorphisms $\rho_j^{(i)}:\Lambda_i\to\Gamma_i$ fixing $\Gamma_i$ pointwise such that $\rho_j^{(i)}(h_k^{(i)})\neq e$ for $k\leq j$.
Define $\rho_j=\bigoplus_{i=1}^n\rho_j^{(i)}:\Lambda\to\Gamma$.
Then $\rho_j|_\Gamma=\operatorname{id}_\Gamma$, and, for every $g\in\Lambda\setminus\{e\}$, one has $\rho_j(g)\neq e$ for all sufficiently large $j$.
Indeed, if $g_i=h_k^{(i)}\neq e$, then the $i$th coordinate of $\rho_j(g)$ is nontrivial for $j\geq k$.
Let $\mathfrak{pd}(\mathcal M)=\psi_{\mathcal M}$ be the map in Lemma~\ref{lem:ira-transfer-measurability}(1).
Let $\mathfrak{pd}_j$ be the map in Lemma~\ref{lem:ira-transfer-measurability}(3) associated with $\rho_j$, and set $\eta_j=(\mathfrak{pd}_j)_*\mu$.
By that lemma, each $\eta_j$ is a $\Lambda$-invariant probability measure on $\operatorname{PD}_1(\Lambda\times\Lambda)$.
The space $\operatorname{PD}_1(\Lambda\times\Lambda)$ is compact metrizable, so pass to a subsequence such that $\eta_j\to\eta$ weakly.
Continuity of the action makes $\eta$ $\Lambda$-invariant.

Fix $g\in\Lambda\setminus\{e\}$.
For all sufficiently large $j$ and every $\mathcal M\in\SA(L(\Gamma))$, trace preservation gives
$\mathfrak{pd}_j(\mathcal M)(g,e) =\mathfrak{pd}_j(\mathcal M)(e,g) =\tau_0(\lambda(\rho_j(g)))=0$.
Thus
$$\int\bigl(|\psi(g,e)|^2+|\psi(e,g)|^2\bigr)\,d\eta(\psi) =\lim_j\int\bigl(|\psi(g,e)|^2+|\psi(e,g)|^2\bigr)\,d\eta_j(\psi) =0.$$
By countability of $\Lambda$, for $\eta$-almost every $\psi$, one has $\psi(g,e)=\psi(e,g)=\delta_{g,e}$ for every $g\in\Lambda$.
Apply the equivariant homeomorphism $\psi\mapsto T_\psi$ from Proposition~\ref{prop:ira-ucp-correspondence}.
By Lemma~\ref{lem:ira-transfer-measurability}(2), the map $\mathfrak F(\psi)=\operatorname{Fix}(T_\psi)$ is Borel; it is also $\Lambda$-equivariant.
Thus $\widetilde\mu=\mathfrak F_*\eta$ is an IRA of $\Lambda$.

The Bass--Serre action of each $\Lambda_i$ is non-elementary and acylindrical, with trivial edge stabilizers.
A finite normal subgroup fixes a vertex, and its fixed subtree is $\Lambda_i$-invariant.
Minimality makes this fixed subtree the whole tree, so triviality of the edge stabilizers makes the subgroup trivial.
By \cite[Corollary~7.3(a)]{osin2016acylindrically}, the amenable radical of $\Lambda_i$ is finite, hence trivial.
By the recurrence argument in the introduction, $\widetilde\mu$-almost every subalgebra is boomerang.
Theorem~\ref{thm:boomerang-products-ah} therefore gives $\operatorname{Fix}(T_\psi)=L(H)$ for some $H\leq\Lambda$, for $\eta$-almost every $\psi$.
Let $\operatorname{Res}_\Gamma:\operatorname{PD}_1(\Lambda\times\Lambda)\to\operatorname{PD}_1(\Gamma\times\Gamma)$ be the continuous map given by $\operatorname{Res}_\Gamma(\psi)=\psi|_{\Gamma\times\Gamma}$.
Since $\rho_j|_\Gamma=\operatorname{id}_\Gamma$, one has $\operatorname{Res}_\Gamma\circ\mathfrak{pd}_j=\mathfrak{pd}$.
Consequently,
$(\operatorname{Res}_\Gamma)_*\eta_j =(\operatorname{Res}_\Gamma\circ\mathfrak{pd}_j)_*\mu =\mathfrak{pd}_*\mu$.
Continuity of $\operatorname{Res}_\Gamma$ and weak convergence give $(\operatorname{Res}_\Gamma)_*\eta=\mathfrak{pd}_*\mu$.
By Lemma~\ref{lem:ira-transfer-measurability}(1), $\mathfrak{pd}$ is continuous and injective.
Thus $\mathfrak{pd}(\SA(L(\Gamma)))$ is Borel by the Lusin--Souslin theorem \cite[Theorem~15.1]{Kechris1995}.
It follows that $\operatorname{Res}_\Gamma(\psi)\in\mathfrak{pd}(\SA(L(\Gamma)))$ for $\eta$-almost every $\psi$.
For such a $\psi$, there is a unique $\mathcal M\in\SA(L(\Gamma))$ such that $\operatorname{Res}_\Gamma(\psi)=\mathfrak{pd}(\mathcal M)$.
Fix such a $\psi$ with $\operatorname{Fix}(T_\psi)=L(H)$.
For $g,h\in\Gamma$,
$\tau_0(T_\psi(\lambda(g))\lambda(h)^*) =\psi(g,h) =\psi_{\mathcal M}(g,h) =\tau_0(\mathbb E_{\mathcal M}(\lambda(g))\lambda(h)^*)$.
Comparing Fourier coefficients gives $\mathbb E_{L(\Gamma)}(T_\psi(\lambda(g)))=\mathbb E_{\mathcal M}(\lambda(g))$.
Both maps are normal, so $\mathbb E_{L(\Gamma)}(T_\psi(x))=\mathbb E_{\mathcal M}(x)$ for every $x\in L(\Gamma)$.
The Schwarz inequality and trace preservation give
$$\|T_\psi(x)\|_2^2 =\tau_0(T_\psi(x)^*T_\psi(x)) \leq\tau_0(T_\psi(x^*x)) =\|x\|_2^2.$$
If $x\in \mathcal M$, then $\mathbb E_{L(\Gamma)}(T_\psi(x))=x$.
Orthogonality gives
$\|T_\psi(x)-x\|_2^2 =\|T_\psi(x)\|_2^2-\|x\|_2^2 \leq0$.
Thus $x\in\operatorname{Fix}(T_\psi)$, and $\mathcal M\subseteq\mathbb E_{L(\Gamma)}(\operatorname{Fix}(T_\psi))$.

Conversely, if $x\in\mathbb E_{L(\Gamma)}(\operatorname{Fix}(T_\psi))=\mathbb E_{L(\Gamma)}(L(H))=L(H\cap\Gamma)$, then $T_\psi(x)=x$, so that $\mathbb E_{\mathcal M}(x)=\mathbb E_{L(\Gamma)}(T_\psi(x))=x$.
Therefore
$\mathcal M=\mathbb E_{L(\Gamma)}(\operatorname{Fix}(T_\psi)) =L(H\cap\Gamma)$.
By Proposition~\ref{prop:maps}, $L(\Sub(\Gamma))$ is compact, and so is $\mathfrak{pd}(L(\Sub(\Gamma)))$.
The preceding argument and injectivity of $\mathfrak{pd}$ give
$\mu(L(\Sub(\Gamma))) =(\mathfrak{pd}_*\mu)(\mathfrak{pd}(L(\Sub(\Gamma)))) =\eta\bigl(\operatorname{Res}_\Gamma^{-1}(\mathfrak{pd}(L(\Sub(\Gamma))))\bigr) =1$.
Hence $\nu=J_*\mu$ is an IRS of $\Gamma$ and $L_*\nu=\mu$.
Finally, $JL=\operatorname{id}$ gives uniqueness.
\end{proof}
It is not true in general that all mixed identity-free groups are Boomerang rigid as the following example demonstrates. 
\begin{example}\label{ex:mif-boomerang}
By \cite[Theorem~1.3]{J}, there exists a countable nontrivial locally finite $3$-group $\Gamma$ which is mixed identity-free.
Thus $\Gamma$ is IRA rigid by Theorem~\ref{thm:ira-mif}.
Every von Neumann subalgebra of $L(\Gamma)$ is boomerang, since conjugation by each $\gamma\in\Gamma$ has finite order.
Choose $g\in\Gamma$ of order $3$ and set $p=(1+\lambda(g)+\lambda(g)^2)/3$.
Then $p$ is a projection with $\tau(p)=1/3$, so $M=\bC p\oplus\bC(1-p)\subseteq L(\langle g\rangle)$ is a two-dimensional unital von Neumann subalgebra.
It cannot equal $L(\Delta)$ for any subgroup $\Delta\leq\Gamma$, since this would imply $|\Delta|=2$.
\end{example}
We end this section by showing that a big class of groups or their finite products are IRA rigid, thanks to Theorem~\ref{thm:ira-products-mif}.
\begin{corollary}\label{cor:ira-boundaries-trees}
Let $n\geq1$, and let $\Gamma_i$ be countable groups for $1\leq i\leq n$.
Suppose that each $\Gamma_i$ satisfies one of the following conditions:
\begin{enumerate}
\item $\Gamma_i$ admits a topologically free minimal extremely proximal action on a compact Hausdorff space with more than two points.
\item $\Gamma_i$ admits a faithful minimal action of general type on a tree, and the induced action on the boundary of the tree is topologically free.
\end{enumerate}
Then $\Gamma=\bigoplus_{i=1}^n\Gamma_i$ is IRA rigid.
In particular, every finite product of Baumslag--Solitar groups $\operatorname{BS}(m,n)$ with $|m|,|n|\geq2$ and $|m|\neq|n|$ is IRA rigid.
\end{corollary}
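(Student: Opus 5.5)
The plan is to reduce Corollary~\ref{cor:ira-boundaries-trees} to Theorems~\ref{thm:ira-mif} and~\ref{thm:ira-products-mif}. It suffices to show that every $\Gamma_i$ satisfying (1) or (2) is countable, nontrivial and mixed identity-free. Then $n=1$ is covered by Theorem~\ref{thm:ira-mif} and $n\geq2$ by Theorem~\ref{thm:ira-products-mif}. Nontriviality is automatic: a minimal extremely proximal action on a space with more than two points cannot be the action of the trivial group, and a general type action on a tree requires loxodromic elements.

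\emph{Case (1).} Here I would invoke the known result (Hull--Osin, \cite{hull-osin2016}, with the refinement in the literature on extremely proximal/boundary actions) that a group admitting a topologically free, minimal, extremely proximal action on a compact Hausdorff space with more than two points is MIF. If a self-contained argument is wanted, the idea is a ping-pong construction. Given a nontrivial mixed word $w(t)=g_0t^{\epsilon_1}g_1\cdots t^{\epsilon_k}g_k$, topological freeness and minimality produce a nonempty open set $U$ whose images under the finitely many coefficients $g_i$ (and partial products) are pairwise disjoint from prescribed small sets. Extreme proximality then gives an element $t_0$ that contracts the complement of a small open set into another small open set. Substituting a high power of $t_0$ for $t$, ping-pong shows $w(t_0^N)$ moves some point, hence is nontrivial. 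This step is the main obstacle, since the ping-pong bookkeeping with coefficients is delicate. I would rely on the citation rather than reprove it.

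\emph{Case (2).} A faithful minimal general type action on a tree induces an action on $\partial T$. When restricted to the closure of the limit set, this action is minimal and extremely proximal; this is the standard fact that hyperbolic elements act with north--south dynamics, and minimality of the action on the tree gives minimality on the boundary. Topological freeness on the boundary is assumed. Faithfulness carries over from the tree to the boundary, since a general type minimal action on a tree is determined by its boundary action. So case (2) reduces to case (1), or one cites directly the tree version of MIF in \cite{hull-osin2016}. The boundary has more than two points because the action is of general type.

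\emph{Baumslag--Solitar groups.} For $\operatorname{BS}(m,n)$ with $|m|,|n|\geq2$ and $|m|\neq|n|$, I would use the Bass--Serre tree of the HNN presentation. That action is minimal, faithful (the group has no nontrivial normal subgroup fixing the tree, as the kernel is contained in the intersection of the conjugates of $\langle a\rangle$, which is trivial when $|m|\neq|n|$), and of general type. Its boundary action is topologically free in the non-unimodular case; this is a known result used to establish $C^*$-simplicity of these groups. Then (2) applies, and the product statement follows from the first part.
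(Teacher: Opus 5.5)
Your skeleton matches the paper's. You show each $\Gamma_i$ is countable, nontrivial and mixed identity-free, then finish with Theorem~\ref{thm:ira-mif} for $n=1$ and Theorem~\ref{thm:ira-products-mif} for $n\geq2$. The paper's proof consists only of this. It takes the MIF inputs from \cite[Proposition~3.4 and Theorem~3.6]{ABGM2026} in case (1), from \cite[Theorem~B]{FLMMS2022} in case (2), and from \cite[Theorem~B and Lemma~8.6]{FLMMS2022} for $\operatorname{BS}(m,n)$. Your attribution to \cite{hull-osin2016} is off. That paper gives MIF for acylindrically hyperbolic groups without nontrivial finite normal subgroups, and the reduction to one free variable. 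It does not give the statement for extremely proximal actions or for tree actions with topologically free boundary action. As written, your case (1) therefore rests on an unspecified reference.

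The genuine gap is your reduction of (2) to (1). Condition (1) concerns a compact Hausdorff space. But $\partial T$ with its shadow topology is compact only when $T$ is locally finite, and (2) assumes nothing of the sort.

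For example, let $\mathbb Z*\mathbb Z$ act on the Bass--Serre tree of this splitting. The action is faithful, minimal and of general type. Its boundary action is topologically free, because edge stabilizers are trivial. Yet every vertex has infinite valence. Ends that leave a vertex through infinitely many distinct edges have no convergent subsequence, so $\partial T$ is not compact. North--south dynamics on $\partial T$ therefore does not place you in case (1).

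To salvage the reduction you would need a compact model, e.g.\ $\partial T\cup V_\infty$ with the observers' topology, and you would have to re-verify minimality, extreme proximality and topological freeness there. The paper sidesteps this by citing the tree theorem \cite[Theorem~B]{FLMMS2022} directly.

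Your Baumslag--Solitar argument is unaffected, since that Bass--Serre tree has valence $|m|+|n|$, and your faithfulness computation is fine. For topological freeness of the boundary action, cite \cite[Lemma~8.6]{FLMMS2022} rather than a general appeal to the $C^*$-simplicity literature.
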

\begin{proof}
Each $\Gamma_i$ is mixed identity-free by \cite[Proposition~3.4 and Theorem~3.6]{ABGM2026} in case $(1)$ and \cite[Theorem~B]{FLMMS2022} in case $(2)$.
Apply Theorems~\ref{thm:ira-mif} and~\ref{thm:ira-products-mif}.
The Baumslag--Solitar case follows from \cite[Theorem~B and Lemma~8.6]{FLMMS2022}.
\end{proof}
Amenable groups with the ISR property were first obtained by
Jiang--Zhou for the group of finitely supported
permutations of $\mathbb N$ \cite{jiang2024example}. More examples of such amenable groups can be found in \cite{dudko2024character} and \cite{amrutam2025invariant}. We can now obtain the following examples of amenable IRA rigid groups.
\begin{corollary}\label{cor:jacobson}
The following hold true.
\begin{enumerate}
\renewcommand{\labelenumi}{\textup{(\roman{enumi})}}
\item There exists a two-generated infinite elementary amenable
group which is IRA rigid.
\item For each prime $p$, there exists an infinite locally finite
$p$-group which is IRA rigid.
\item There exist continuum many pairwise nonisomorphic finitely generated
nonamenable IRA rigid groups which contain no nonabelian free subgroup
and have nontrivial amenable radical.
\end{enumerate}
\end{corollary}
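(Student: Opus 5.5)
The plan is to deduce all three items from Theorem~\ref{thm:ira-mif}, Theorem~\ref{thm:ira-products-mif} and Corollary~\ref{cor:ira-boundaries-trees}. In each case the work is to exhibit groups in the literature that are \emph{mixed identity-free} and have the required extra property. So the proof is a matter of locating suitable MIF groups rather than any new operator-algebraic argument.

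For (ii), I would use the construction of Jacobson~\cite{J}, which for each prime $p$ produces countable nontrivial locally finite $p$-groups that are mixed identity-free (the case $p=3$ is the one quoted in Example~\ref{ex:mif-boomerang}). A mixed identity-free group is infinite: a finite group of exponent $e$ satisfies the mixed identity $t^e=1$. Hence Theorem~\ref{thm:ira-mif} gives IRA rigidity directly.

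For (i), I would again rely on~\cite{J}, which (as I recall) also provides a two-generated infinite elementary amenable MIF group. Failing that, I would take a two-generated elementary amenable group with a faithful extremely proximal, topologically free minimal action, for instance a suitable lamplighter-type or Thompson-like construction. Theorem~\ref{thm:ira-mif} or Corollary~\ref{cor:ira-boundaries-trees} then applies. Infiniteness is automatic as above. The only point to check is that the source really states elementary amenability and two-generation, which are the hypotheses one must cite precisely.

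For (iii), I would take continuum many pairwise nonisomorphic finitely generated nonamenable MIF groups $G_\alpha$ without nonabelian free subgroups. Natural sources are the MIF examples with no free subgroups from~\cite{J}, or known Golod--Shafarevich/Olshanskii-type families. I would then form the products $\Gamma_\alpha=G_\alpha\oplus A$, where $A$ is a fixed infinite locally finite MIF $p$-group from (ii). The obstacle is that $A$ is not finitely generated. So I would instead use for $A$ the two-generated amenable MIF group from (i). With this choice:
\begin{itemize}
\item $\Gamma_\alpha$ is finitely generated;
\item $\Gamma_\alpha$ is nonamenable because $G_\alpha$ is;
\item $\Gamma_\alpha$ has no nonabelian free subgroup, since a free subgroup would project injectively to one factor or give commuting nontrivial images, which is impossible for $F_2$ modulo the amenable factor;
\item the amenable radical of $\Gamma_\alpha$ contains $A\neq\{e\}$.
\end{itemize}
Theorem~\ref{thm:ira-products-mif} then gives IRA rigidity. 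Pairwise nonisomorphism reduces to that of the $G_\alpha$ by modding out the amenable radical: $\Gamma_\alpha/\mathrm{Rad}(\Gamma_\alpha)\cong G_\alpha/\mathrm{Rad}(G_\alpha)$. MIF groups have trivial amenable radical, since a nontrivial normal amenable subgroup is incompatible with MIF, so this quotient is $G_\alpha$ itself, at least up to countable-to-one ambiguity, which still leaves continuum many. The main obstacle is the literature input: finding a continuum family of nonamenable MIF groups without free subgroups, which I expect~\cite{J} to provide.
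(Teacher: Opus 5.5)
Your treatment of (i) and (ii) is the same as the paper's. \cite[Theorems~1.2 and~1.3]{J} supply a two-generated infinite elementary amenable MIF group and, for each prime $p$, an infinite locally finite MIF $p$-group, and Theorem~\ref{thm:ira-mif} then finishes.

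Discard the fallback you suggest for (i): no amenable group has a minimal extremely proximal action on a space $X$ with at least three points. An invariant probability measure $\mu$ has full support by minimality. Take pairwise disjoint nonempty open sets $U_1,U_2,U_3$. Extreme proximality gives $g$ with $g(X\setminus U_1)\subseteq U_2$. Invariance then gives $\mu(U_2)+\mu(U_3)\leq\mu(X\setminus U_1)\leq\mu(U_2)$, so $\mu(U_3)=0$, contradicting full support.

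Item (iii) has a genuine gap. Its entire content is the existence of a continuum family of finitely generated nonamenable MIF groups without nonabelian free subgroups, and you do not produce one. The source you expect, \cite{J}, only constructs amenable groups. You also have not verified the MIF property for Golod--Shafarevich or Olshanskii-type groups. The paper takes this input from Hyde--Lodha \cite[Corollary~1.3]{HL}, which supplies continuum many pairwise nonisomorphic finitely generated nonamenable MIF groups with no nonabelian free subgroup and nontrivial amenable radical. It then applies Theorem~\ref{thm:ira-mif} to each of them directly, so neither a product nor a nonisomorphism argument is needed.

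Separately, your nonisomorphism step rests on a false claim: MIF groups can have nontrivial amenable radical. Jacobson's amenable MIF groups from (i) and (ii) are counterexamples, since their radical is the whole group, and so are the Hyde--Lodha groups.

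Your product idea is otherwise sound, and it would even remove the need for a nontrivial radical in the input family.
\begin{enumerate}
\item Theorem~\ref{thm:ira-products-mif} gives IRA rigidity of $G_\alpha\oplus A$.
\item A nonabelian free $F\leq G_\alpha\oplus A$ meets $\{e\}\oplus A$ in an amenable normal subgroup of $F$, hence trivially, so $F$ embeds in $G_\alpha$.
\item Nonisomorphism follows by counting instead of radicals. If $G_\alpha\oplus A\cong G_\beta\oplus A$, then $G_\alpha\cong(G_\beta\oplus A)/N$ for a two-generated normal subgroup $N$. A countable group has only countably many finitely generated subgroups, so each isomorphism class occurs for at most countably many $\alpha$.
\end{enumerate}
This salvage still presupposes the missing family $(G_\alpha)$.
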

\begin{proof}
$(i)$ and $(ii)$ follow from
\cite[Theorems~1.2 and~1.3]{J} and Theorem~\ref{thm:ira-mif}.
For $(iii)$, \cite[Corollary~1.3]{HL} put together with Theorem~\ref{thm:ira-mif} gives the conclusion.
\end{proof}
We can also embed any amenable group into an IRA rigid amenable group that is finitely generated.
\begin{corollary}\label{cor:embedding}
Let $\Gamma$ be a countable amenable group. Then, it can embed into a finitely generated
amenable IRA rigid group.
If the given group is elementary amenable, the overgroup can be
chosen elementary amenable.
\end{corollary}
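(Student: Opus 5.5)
The plan is to embed the given amenable group $\Gamma$ into a finitely generated amenable group that is mixed identity-free, and then apply Theorem~\ref{thm:ira-mif}. The input we need is an embedding theorem of the type in \cite{J}: every countable amenable group embeds into a finitely generated amenable mixed identity-free group, and into an elementary amenable one when the original group is elementary amenable. This is the natural companion to \cite[Theorems~1.2 and~1.3]{J}, already used in Corollary~\ref{cor:jacobson}.

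First, if needed, replace $\Gamma$ by $\Gamma\times\mathbb Z$ so that it is nontrivial and infinite. This preserves amenability and elementary amenability, and $\Gamma$ embeds in it.

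Next, invoke the embedding theorem to get a finitely generated amenable group $G\supseteq\Gamma$ that is mixed identity-free, with $G$ elementary amenable if $\Gamma$ is. Mixed identity-freeness forces $G$ to be nontrivial, and $G$ is countable because it is finitely generated.

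Finally, Theorem~\ref{thm:ira-mif} shows that $G$ is IRA rigid, which proves the corollary. No product version is needed here.

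The main obstacle is locating the precise embedding statement. If it is not stated in \cite{J} in this form, I would try to build it as follows.
\begin{itemize}
\item Start from the construction behind \cite[Theorem~1.2]{J}, which produces MIF groups as restricted wreath products or iterated extensions.
\item Embed $\Gamma$ first into a finitely generated amenable group, using the B.~H.~Neumann--H.~Neumann wreath product embedding. This stays within amenable groups, and within elementary amenable groups when $\Gamma$ is elementary amenable.
\item Then feed that group into Jacobson's MIF construction, after checking that it accepts an arbitrary finitely generated amenable base and preserves finite generation and (elementary) amenability.
\end{itemize}
The delicate point is verifying that mixed identities are killed while the base group still embeds. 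I would follow the argument of \cite{J} verbatim with the new base group.
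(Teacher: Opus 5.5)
Your reduction is the same as the paper's: embed $\Gamma$ into a finitely generated (elementary) amenable mixed identity-free group, then apply Theorem~\ref{thm:ira-mif}. The bookkeeping is fine: countability follows from finite generation, and nontriviality is built into the definition of MIF. Passing to $\Gamma\times\mathbb Z$ is harmless but unnecessary.

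The gap is the one substantive input, which you cite to the wrong source. The embedding theorem is not in \cite{J}. Jacobson's Theorems~1.2 and~1.3 only produce specific examples: a two-generated elementary amenable MIF group, and locally finite MIF $p$-groups. They contain no statement about embedding an arbitrary countable amenable group. The result you need is Hyde--Lodha, \cite[Corollary~1.2\textup{(1)}]{HL}: every countable (elementary) amenable group embeds into a finitely generated (elementary) amenable MIF group. This is what the paper's proof cites, and it is in the bibliography under the title ``Embeddings into highly transitive and mixed identity free groups''.

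Your fallback construction does not fill this gap. The B.~H.~Neumann--H.~Neumann step is fine and preserves (elementary) amenability, but it gets you nowhere near MIF. An MIF group has no nontrivial abelian normal subgroup: if $a\neq e$ lies in one, then $[a,tat^{-1}]$ is a nontrivial reduced word in $\Gamma*\langle t\rangle$ that dies under every substitution $t\mapsto g$. Wreath-product overgroups of an abelian $\Gamma$ typically contain such subgroups; for example, the base $\bigoplus_{\mathbb Z}\Gamma$ of $\Gamma\wr\mathbb Z$ is one.

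So the entire difficulty sits in the step ``feed the group into Jacobson's construction and check that mixed identities are killed while the base still embeds''. You give no argument for this step. Saying you would follow \cite{J} verbatim is not a proof, because that construction is not set up to accept an arbitrary base group; doing so is exactly the new content of \cite{HL}. Once you replace your citation by \cite[Corollary~1.2\textup{(1)}]{HL}, your argument is complete and coincides with the paper's.
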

\begin{proof}
It follows from \cite[Corollary~1.2\textup{(1)}]{HL} that every countable
(elementary) amenable group embeds into a finitely generated
(elementary) amenable MIF group.
The proof finishes by applying Theorem~\ref{thm:ira-mif}.
\end{proof}
\bibliographystyle{abbrv}
\bibliography{refs}

@article {Effros,
    AUTHOR = {Effros, Edward G.},
     TITLE = {The {B}orel space of von {N}eumann algebras on a separable
              {H}ilbert space},
   JOURNAL = {Pacific J. Math.},
  FJOURNAL = {Pacific Journal of Mathematics},
    VOLUME = {15},
      YEAR = {1965},
     PAGES = {1153--1164},}

@misc{ABGM2026,
  author = {Amrutam, Tattwamasi and Banerjee, Arka and Gulbrandsen, Daniel L. and Mishra, Pratyush},
  title = {Boundary Dynamics, Cubical Actions, and Infinite Girth},
  year = {2026},
  eprint = {2608.31124},
  archivePrefix = {arXiv},
  primaryClass = {math.GR},
  url = {https://arxiv.org/abs/2608.31124},
  note = {arXiv:2608.31124},
}

@misc{AFJ:uniformly-recurrent,
  author = {Amrutam, Tattwamasi and Fima, Pierre and Jiang, Yongle},
  title = {Uniformly recurrent subalgebras in finite von {Neumann} algebras},
  year = {2026},
  eprint = {2606.09673},
  archivePrefix = {arXiv},
  primaryClass = {math.OA},
  url = {https://arxiv.org/abs/2606.09673},
  note = {arXiv:2606.09673},
}

@article{AJ:cstar-isr,
  author = {Amrutam, Tattwamasi and Jiang, Yongle},
  title = {Invariant {$C^*$}-subalgebras of the reduced group {$C^*$}-algebra},
  journal = {Journal of Functional Analysis},
  volume = {291},
  number = {1},
  pages = {111471},
  year = {2026},
  doi = {10.1016/j.jfa.2026.111471},
  eprint = {2503.09548},
  archiveprefix = {arXiv},
  primaryclass = {math.OA},
  url = {https://arxiv.org/abs/2503.09548},
  note = {arXiv:2503.09548},
}

@article{AMS,
  author = {Antol\'in, Yago and Minasyan, Ashot and Sisto, Alessandro},
  title = {Commensurating endomorphisms of acylindrically hyperbolic groups and applications},
  journal = {Groups Geom. Dyn.},
  volume = {10},
  pages = {1149--1210},
  year = {2016},
  doi = {10.4171/GGD/379},
}

@misc{AP:II1,
  author = {Anantharaman, Claire and Popa, Sorin},
  title = {An introduction to {$\mathrm{II}_1$} factors},
  year = {2017},
  note = {Draft},
  url = {https://www.math.ucla.edu/~popa/Books/IIunV15.pdf},
}

@article{BBHP,
  author = {Bader, Uri and Boutonnet, R\'emi and Houdayer, Cyril and
              Peterson, Jesse},
  title = {Charmenability of arithmetic groups of product type},
  journal = {Invent. Math.},
  fjournal = {Inventiones Mathematicae},
  volume = {229},
  year = {2022},
  number = {3},
  pages = {929--985},
  issn = {0020-9910,1432-1297},
  mrclass = {22D10 (20C15 22D25 22E40 37A55 43A35 46L10)},
  mrnumber = {4462622},
  mrreviewer = {Robert\ S.\ Doran},
  doi = {10.1007/s00222-022-01117-w},
  url = {https://doi.org/10.1007/s00222-022-01117-w},
}

@article{BCD,
  author = {Bekka, M. and Cowling, M. and de la Harpe, P.},
  title = {Some groups whose reduced {$C^*$}-algebra is simple},
  journal = {Publ. Math. Inst. Hautes \'Etudes Sci.},
  volume = {80},
  pages = {117--134},
  year = {1994},
  doi = {10.1007/BF02698898},
  url = {https://www.numdam.org/articles/10.1007/BF02698898/},
}

@article{Bekka,
  author = {Bachir Bekka},
  title = {Operator-algebraic superrigidity for {$\mathrm{SL}_n(\mathbb Z)$}, $n\geq3$},
  journal = {Inventiones Mathematicae},
  volume = {169},
  number = {2},
  pages = {401--425},
  year = {2007},
  doi = {10.1007/s00222-007-0050-5},
  eprint = {math/0609102},
  archiveprefix = {arXiv},
  note = {arXiv:math/0609102},
}

@article{CDS2023,
  author = {Chifan, Ionu\c{t} and Das, Sayan and Sun, Bin},
  title = {Invariant subalgebras of von {Neumann} algebras arising from negatively curved groups},
  journal = {J. Funct. Anal.},
  volume = {285},
  number = {9},
  pages = {110098},
  year = {2023},
  doi = {10.1016/j.jfa.2023.110098}
}

@misc{DGGHL:char_rigid,
  title = {Non-uniform higher-rank lattices are character rigid},
  author = {Alon Dogon and Michael Glasner and Yuval Gorfine and Liam Hanany and Arie Levit},
  year = {2025},
  eprint = {2507.21862},
  archiveprefix = {arXiv},
  primaryclass = {math.GR},
  url = {https://arxiv.org/abs/2507.21862},
  note = {arXiv:2507.21862},
}

@article{DGO,
  author = {Dahmani, Fran\c{c}ois and Guirardel, Vincent and Osin, Denis},
  title = {Hyperbolically embedded subgroups and rotating families in groups acting on hyperbolic spaces},
  journal = {Mem. Amer. Math. Soc.},
  volume = {245},
  number = {1156},
  year = {2017},
  doi = {10.1090/memo/1156},
  eprint = {1111.7048},
  archiveprefix = {arXiv},
  note = {arXiv:1111.7048},
}

@article{FLMMS2022,
  author = {Fima, Pierre and {Le Ma{\^\i}tre}, Fran{\c c}ois and Moon, Soyoung and Stalder, Yves},
  title = {A characterization of high transitivity for groups acting on trees},
  journal = {Discrete Anal.},
  year = {2022},
  pages = {Paper No. 8, 63 pp.},
  doi = {10.19086/da.37645},
  eprint = {2003.11116},
  archivePrefix = {arXiv},
  primaryClass = {math.GR},
  url = {https://arxiv.org/abs/2003.11116},
  note = {arXiv:2003.11116},
}

@misc{FLMP:ucp,
  author = {Fima, Pierre and {Le Ma{\^\i}tre}, Fran{\c c}ois and Mukherjee, Kunal and Patri, Issan},
  title = {Spaces of {UCP} maps and subalgebras of von {Neumann} algebras},
  year = {2026},
  eprint = {2607.14358},
  archivePrefix = {arXiv},
  primaryClass = {math.OA},
  url = {https://arxiv.org/abs/2607.14358},
  note = {arXiv:2607.14358},
}

@article{G:lin_IRS,
  author = {Glasner, Yair},
  title = {Invariant random subgroups of linear groups},
  journal = {Israel J. Math.},
  volume = {219},
  number = {1},
  pages = {215--270},
  year = {2017},
  doi = {10.1007/s11856-017-1479-x},
  eprint = {1407.2872},
  archiveprefix = {arXiv},
  note = {With an appendix by Tsachik Gelander and Yair Glasner; arXiv:1407.2872}
}

@misc{GHL:ec,
  author = {Glasner, Yair and Hartnick, Tobias and Lederle, Waltraud},
  title = {Elementwise conservative actions and new constructions of boomerang subgroups},
  year = {2025},
  eprint = {2512.20596},
  archiveprefix = {arXiv},
  primaryclass = {math.GR},
  url = {https://arxiv.org/abs/2512.20596},
  note = {arXiv:2512.20596}
}

@misc{JL,
  author = {Yongle Jiang and Hongyi Li},
  title = {Classification of Invariant Subalgebras in a Class of Factors with Property {(T)}},
  year = {2026},
  eprint = {2601.06353},
  archiveprefix = {arXiv},
  primaryclass = {math.OA},
  note = {arXiv:2601.06353},
}

@article{KS2016,
  author = {Kar, Aditi and Sageev, Michah},
  title = {Ping Pong on {CAT}(0) cube complexes},
  journal = {Comment. Math. Helv.},
  volume = {91},
  number = {3},
  pages = {543--561},
  year = {2016},
  doi = {10.4171/CMH/395}
}

@book{Kechris1995,
  author = {Kechris, Alexander S.},
  title = {Classical Descriptive Set Theory},
  series = {Graduate Texts in Mathematics},
  volume = {156},
  publisher = {Springer-Verlag},
  address = {New York},
  year = {1995},
  doi = {10.1007/978-1-4612-4190-4},
  isbn = {978-0-387-94374-9},
}

@misc{Lederle2026,
  author = {Lederle, Waltraud},
  title = {Critical exponents of boomerang subgroups in the free group},
  year = {2025},
  eprint = {2512.20767},
  archiveprefix = {arXiv},
  primaryclass = {math.GR},
  url = {https://arxiv.org/abs/2512.20767},
  note = {arXiv:2512.20767}
}

@article{NSZ:intermediate_factor,
  author = {Nevo, Amos and Zimmer, Robert J.},
  title = {A generalization of the intermediate factors theorem},
  journal = {J. Anal. Math.},
  fjournal = {Journal d'Analyse Math\'ematique},
  volume = {86},
  year = {2002},
  pages = {93--104},
  issn = {0021-7670,1565-8538},
  mrclass = {22E46},
  mrnumber = {1894478},
  mrreviewer = {Tuong Ton That},
  doi = {10.1007/BF02786645},
  url = {https://doi.org/10.1007/BF02786645},
}

@article{PT:char_rigidity,
  author = {Peterson, Jesse and Thom, Andreas},
  title = {Character rigidity for special linear groups},
  journal = {J. Reine Angew. Math.},
  fjournal = {Journal f\"ur die Reine und Angewandte Mathematik. [Crelle's
              Journal]},
  volume = {716},
  year = {2016},
  pages = {207--228},
  issn = {0075-4102,1435-5345},
  mrclass = {22D20 (20G30 22D10 22F10)},
  mrnumber = {3518376},
  mrreviewer = {Anton\ Deitmar},
  doi = {10.1515/crelle-2014-0009},
  url = {https://doi.org/10.1515/crelle-2014-0009},
}

@article{abbott2019property,
  author = {Abbott, Carolyn R. and Dahmani, Fran\c{c}ois},
  title = {Property {$P_{\mathrm{naive}}$} for acylindrically hyperbolic groups},
  journal = {Math. Z.},
  volume = {291},
  pages = {555--568},
  year = {2019},
  doi = {10.1007/s00209-018-2094-1},
  eprint = {1610.04143},
  archiveprefix = {arXiv},
  note = {arXiv:1610.04143},
}

@misc{amrutam-relative-isr,
  author = {Amrutam, Tattwamasi},
  title = {On Relative Invariant Subalgebra Rigidity Property},
  year = {2026},
  eprint = {2604.04835},
  archiveprefix = {arXiv},
  primaryclass = {math.OA},
  url = {https://arxiv.org/abs/2604.04835},
  note = {arXiv:2604.04835},
}

@article{amrutam2023invariant,
  author = {Amrutam, T. and Jiang, Y.},
  title = {On invariant von {N}eumann subalgebras rigidity property},
  journal = {J. Funct. Anal.},
  volume = {284},
  number = {5},
  year = {2023},
  pages = {Paper No. 109804},
}

@misc{amrutam2026confined,
  author = {Amrutam, Tattwamasi and Jiang, Yongle},
  title = {{$C^*$}-simplicity, confined subalgebras and operator algebraic uniform recurrence},
  year = {2026},
  eprint = {2604.18458},
  archiveprefix = {arXiv},
  primaryclass = {math.OA},
  url = {https://arxiv.org/abs/2604.18458},
  note = {arXiv:2604.18458},
}

@misc{amrutam2026crossed,
  author = {Amrutam, Tattwamasi and Dudko, Artem and Jiang, Yongle and Skalski, Adam},
  title = {Invariant von {Neumann} subalgebras of crossed products and tensor products},
  year = {2026},
  eprint = {2610.19023},
  archiveprefix = {arXiv},
  primaryclass = {math.OA},
  url = {https://arxiv.org/abs/2610.19023},
  note = {arXiv:2610.19023},
}

@article{amrutam2309amenable,
  author = {Amrutam, Tattwamasi and Hartman, Yair and Oppelmayer, Hanna},
  title = {On the amenable subalgebras of group von {Neumann} algebras},
  journal = {J. Funct. Anal.},
  volume = {288},
  number = {2},
  pages = {110718},
  year = {2025},
  doi = {10.1016/j.jfa.2024.110718},
  eprint = {2309.10494},
  archiveprefix = {arXiv},
  primaryclass = {math.OA},
  note = {arXiv:2309.10494},
}

@book{brown2008cstar,
  author = {Nathanial P. Brown and Narutaka Ozawa},
  title = {{$C^*$}-Algebras and Finite-Dimensional Approximations},
  series = {Graduate Studies in Mathematics},
  volume = {88},
  publisher = {American Mathematical Society},
  address = {Providence, RI},
  year = {2008},
  doi = {10.1090/gsm/088},
}

@article{dudko2024character,
  title = {A character approach to the {ISR} property},
  author = {Dudko, Artem and Jiang, Yongle},
  journal = {arXiv preprint arXiv:2410.14517},
  year = {2024},
  note = {arXiv:2410.14517},
}

@article{glasner2024boomerang,
  author = {Yair Glasner and Waltraud Lederle},
  title = {Strong subgroup recurrence and the {N}evo--{S}tuck--{Z}immer theorem},
  journal = {Proc. Lond. Math. Soc. (3)},
  year = {2025},
  volume = {130},
  number = {6},
  pages = {e70053},
  doi = {10.1112/plms.70053},
  eprint = {2208.14758},
  archiveprefix = {arXiv},
  primaryclass = {math.GR},
  note = {arXiv:2208.14758},
}

@article{haagerup1998effros,
  author = {Uffe Haagerup and Carl Winsl{\o}w},
  title = {The {Effros--Mar\'echal} topology in the space of von {Neumann} algebras},
  journal = {American Journal of Mathematics},
  volume = {120},
  number = {3},
  pages = {567--617},
  year = {1998},
  doi = {10.1353/ajm.1998.0022},
}

@article{hull-osin2016,
  author = {Hull, Michael and Osin, Denis},
  title = {Transitivity degrees of countable groups and acylindrical hyperbolicity},
  journal = {Israel J. Math.},
  volume = {216},
  number = {1},
  pages = {307--353},
  year = {2016},
  doi = {10.1007/s11856-016-1411-9},
  eprint = {1501.04182},
  archivePrefix = {arXiv},
  primaryClass = {math.GR},
  url = {https://arxiv.org/abs/1501.04182},
  note = {arXiv:1501.04182},
}

@article{jiang-scm,
  author = {Yongle Jiang},
  title = {Maximal {von Neumann} subalgebras arising from maximal subgroups},
  journal = {Sci. China Math.},
  volume = {64},
  number = {10},
  pages = {2295--2312},
  year = {2021},
  doi = {10.1007/s11425-020-1671-9},
  eprint = {1911.10483},
  archiveprefix = {arXiv},
  primaryclass = {math.OA},
  note = {arXiv:1911.10483},
}

@article{jiang-skalski2021,
  author = {Jiang, Yongle and Skalski, Adam},
  title = {Maximal subgroups and von {Neumann} subalgebras with the {Haagerup} property},
  journal = {Groups Geom. Dyn.},
  volume = {15},
  number = {3},
  pages = {849--892},
  year = {2021},
  doi = {10.4171/GGD/614},
}

@article{jiang2024example,
  author = {Jiang, Y. and Zhou, X.},
  title = {An example of an infinite amenable group with the {ISR} property},
  journal = {Math. Z.},
  VOLUME = {307},
      YEAR = {2024},
    NUMBER = {2},
     PAGES = {Paper No. 23, 11 pp.},
}

@article{osin2016acylindrically,
  author = {Osin, Denis},
  title = {Acylindrically hyperbolic groups},
  journal = {Trans. Amer. Math. Soc.},
  volume = {368},
  pages = {851--888},
  year = {2016},
  eprint = {1304.1246},
  archiveprefix = {arXiv},
  note = {arXiv:1304.1246},
}

@article{stuck1994stabilizers,
  title = {Stabilizers for ergodic actions of higher rank semisimple groups},
  author = {Stuck, Garrett and Zimmer, Robert J},
  journal = {Annals of Mathematics},
  pages = {723--747},
  year = {1994},
  publisher = {JSTOR},
}

@article{kalantar2023invariant,
  author = {Kalantar, M. and Panagopoulos, N.},
  title = {On invariant subalgebras of group {$C^*$} and von {N}eumann algebras},
  journal = {Ergodic Theory Dynam. Systems},
  volume = {43},
  number = {10},
  year = {2023},
  pages = {3341--3353},
  doi = {10.1017/etds.2022.76},
  url = {https://doi.org/10.1017/etds.2022.76},
}

@article{abert2014kesten,
  author = {Ab\'ert, Mikl\'os and Glasner, Yair and Vir\'ag, B\'alint},
  title = {Kesten's theorem for invariant random subgroups},
  journal = {Duke Math. J.},
  volume = {163},
  number = {3},
  pages = {465--488},
  year = {2014},
  doi = {10.1215/00127094-2410064},
  url = {https://doi.org/10.1215/00127094-2410064},
}

@article{marechal1973topologie,
  author = {Mar\'echal, Odile},
  title = {Topologie et structure bor\'elienne sur l'ensemble des alg\`ebres de von {Neumann}},
  journal = {C. R. Acad. Sci. Paris S\'er. A-B},
  volume = {276},
  pages = {A847--A850},
  year = {1973},
}

@misc{FLMP:marechal,
  author = {Fima, Pierre and {Le Ma{\^\i}tre}, Fran{\c c}ois and Mukherjee, Kunal and Patri, Issan},
  title = {Michael's selection theorem and applications to the {Mar\'echal} topology},
  year = {2024},
  eprint = {2407.05776},
  archivePrefix = {arXiv},
  primaryClass = {math.OA},
  url = {https://arxiv.org/abs/2407.05776},
  note = {arXiv:2407.05776},
}

@article{minasyan-osin2015trees,
  author = {Minasyan, Ashot and Osin, Denis},
  title = {Acylindrical hyperbolicity of groups acting on trees},
  journal = {Math. Ann.},
  volume = {362},
  number = {3--4},
  pages = {1055--1105},
  year = {2015},
  doi = {10.1007/s00208-014-1138-z},
  eprint = {1310.6289},
  archivePrefix = {arXiv},
  primaryClass = {math.GR},
  url = {https://arxiv.org/abs/1310.6289},
  note = {arXiv:1310.6289},
}

@misc{FOT2026,
  author        = {Fima, Pierre and Oppelmayer, Hanna and Tao, Leyan},
  title         = {Random {$C^*$}-subalgebras},
  year          = {2026},
  eprint        = {2610.28295},
  archivePrefix = {arXiv},
  primaryClass  = {math.OA},
  doi           = {10.48550/arXiv.2610.28295},
  url           = {https://arxiv.org/abs/2610.28295},
  note = {arXiv:2610.28295},
}

@misc{AKP:inner-selfless,
  author = {Amrutam, Tattwamasi and Klisse, Mario and Pagliero, Matteo},
  title = {Inner {$C^*$}-selfless groups},
  year = {2026},
  eprint        = {2610.40198},
  archivePrefix = {arXiv},
  primaryClass  = {math.OA},
  doi           = {10.48550/arXiv.2610.40198},
  url           = {https://arxiv.org/pdf/2610.40198},
  note = {arXiv:2610.40198},
}

@article{LBMB2018,
  author = {Le Boudec, Adrien and Matte Bon, Nicol{\'a}s},
  title = {Subgroup dynamics and {$C^\ast$}-simplicity of groups
           of homeomorphisms},
  journal = {Ann. Sci. {\'E}c. Norm. Sup{\'e}r. (4)},
  volume = {51},
  number = {3},
  pages = {557--602},
  year = {2018},
  doi = {10.24033/asens.2361}
}

@article{delaharpe2007simplicity,
  author = {de la Harpe, Pierre},
  title = {On simplicity of reduced {$C^*$}-algebras of groups},
  journal = {Bull. Lond. Math. Soc.},
  volume = {39},
  number = {1},
  pages = {1--26},
  year = {2007},
  doi = {10.1112/blms/bdl014},
  eprint = {math/0509450},
  archivePrefix = {arXiv},
  url = {https://arxiv.org/abs/math/0509450},
  note = {arXiv:math/0509450},
}

@article{LeBoudec2017,
  author = {Le Boudec, Adrien},
  title = {{$C^*$}-simplicity and the amenable radical},
  journal = {Inventiones Mathematicae},
  volume = {209},
  number = {1},
  pages = {159--174},
  year = {2017},
  doi = {10.1007/s00222-016-0706-0}
}

@article{breuillard2017c,
  title={{C}{*}-simplicity and the unique trace property for discrete groups},
  author={Breuillard, Emmanuel and Kalantar, Mehrdad and Kennedy, Matthew and Ozawa, Narutaka},
  journal={Publications math{\'e}matiques de l'IH{\'E}S},
  volume={126},
  pages={35--71},
  year={2017}
}

@article{J,
  author = {Jacobson, Bryan},
  title = {A mixed identity-free elementary amenable group},
  journal = {Communications in Algebra},
  volume = {49},
  number = {1},
  pages = {235--241},
  year = {2021},
  doi = {10.1080/00927872.2020.1797073}
}

@article{amrutam2025invariant,
  title={Invariant subalgebras rigidity for von Neumann algebras of groups arising as certain semidirect products},
  author={Amrutam, Tattwamasi and Dudko, Artem and Jiang, Yongle and Skalski, Adam},
  journal={arXiv preprint arXiv:2507.12824},
  year={2025},
  note = {arXiv:2507.12824},
}

@misc{HL,
  author = {Hyde, James and Lodha, Yash},
  title = {Embeddings into highly transitive and mixed identity free groups},
  year = {2025},
  eprint = {2509.09788},
  archivePrefix = {arXiv},
  primaryClass = {math.GR},
  url = {https://arxiv.org/abs/2509.09788},
  note = {arXiv:2509.09788},
}

@article{CFP1996,
  author = {Cannon, James W. and Floyd, William J. and Parry, Walter R.},
  title = {Introductory notes on {Richard Thompson}'s groups},
  journal = {Enseign. Math. (2)},
  volume = {42},
  number = {3--4},
  pages = {215--256},
  year = {1996},
  doi = {10.5169/seals-87877},
  url = {https://doi.org/10.5169/seals-87877},
}
\end{document}